\newtheorem {theo}{Theorem}[section]
\newtheorem {prop}[theo]{Proposition}
\newtheorem {lem}[theo]{Lemma}
\newtheorem {cor}[theo]{Corollary}
\newtheorem {defi}[theo]{Definition}
\newtheorem {rmk}[theo]{Remark}
\newcommand{\eqnsection}{
   \renewcommand{\theequation}{\thesection.\arabic{equation}}
   \makeatletter
   \csname @addtoreset\endcsname{equation}{section}
   \makeatother}
\newcommand{\be}{{\begin{equation}}}
\newcommand{\ee}{{\end{equation}}}
\def \bt{\begin{theorem}}
\def \et{\end{theorem}}
\def \al{\alpha}
\def \de{\delta}
\def \De{\Delta}
\def \tr{\nabla}
\def \wh{\widehat}
\def \wt{\widetilde}
\def \R{{\Bbb{R}}}
\def \C{{\bf C}}
\def \bX{{\bf X}}
\def \bI{{\bf I}}
\def \bB{{\bf B}}
\def \bW{{\bf W}}
\def \bY{{\bf Y}}
\def \bx{{\bf x}}
\def\Bbb{\mathbb}
\newcommand{\reals}{{\Bbb{R}}}
\newcommand{\beq}[1]{\begin{equation}\label{#1}}
\newcommand{\eeq}{\end{equation}}
\newcommand{\E}{{\Bbb E}}
\newcommand{\beqn}[1]{\begin{eqnarray}\label{#1}}
\newcommand{\eeqn}{\end{eqnarray}}
\newcommand{\oo}{\overline}
\newcommand{\uu}{\underline}
\def\squarebox#1{\hbox to #1{\hfill\vbox to #1{\vfill}}}
\newcommand{\beaa}{\begin{eqnarray*}}
\newcommand{\eeaa}{\end{eqnarray*}}
\def\P{\mathbb P}
\def\PC{{\mathcal C_\star}}
\def\E{\mathbb E}
\def\E{\mathbb E}
\def\C{\mathbb C}
\def\e{\epsilon}
\def \DD{{\mathcal D}}
\def \FF{{\mathcal F}}
\def \KK{{\mathcal K}}
\def \LL{{\mathcal L}}
\def \OO{{\mathcal O}}
\def\N{{\mathbb N}}
\def\tr{{\mbox{tr}}}
\def\oo{\overline}
\def\tr{{\mbox{tr}}}
\def\uu{\underline}
\def\bY{{\bf Y}}
\def\bA{{{\bf A}}}
\def\bB{{{\bf B}}}
\def\bx{{\boldsymbol x}}
\def\bz{{\boldsymbol z}}
\def\bsigma{{\boldsymbol \sigma}}
\def\bG{{\boldsymbol G}}
\def\bD{{\boldsymbol D}}
\def\bB{{\boldsymbol B}}
\def\bW{{\boldsymbol W}}
\def\bt{{\bf t}}
\def\a{\alpha}
\def\d{\delta}
\def\e{\epsilon}
\def\g{\gamma}
\def\s{\sigma}
\def\arg{\mbox{arg}}
\def\D{\Delta}
\def\ra{\rightarrow}
\def\Ca{{\mathcal C}}
\def\Ea{{\mathcal E}}
\def\KK{{\mathcal K}}
\def\rBL{{\bf BL}}
\def\mun{{\hat\mu}}
\def\lbc{\lbrace}
\def\part{\partial}
\def\ts{\times}
\def\Pa{{\mathcal{P}}}
\def\R{{\mathbb R}}
\def\sigm{ \sigma (\frac{i}{N}, \frac{j}{N})}
\def\Eaa{\Ea_\alpha}
\def\Eaae{\Ea_{\alpha,\varepsilon}}
\def\oEaae{\oo{\Ea}_{\alpha,\varepsilon}}
\def\Ball{\mathbb B}
\def\Db{\mathbb K}
\def\Vb{\mathbb V}
\def\Ub{\mathbb U}
\def\Mb{\mathbb M}
\def\Fb{\mathbb F}
\def\Ib{\mathbb I}
\def\Sb{\mathbb S}
\def\Hb{\mathbb H}
\def\Ab{\mathbb A}
\def\Abs{\mathbb D}
\begin{document}

\title[Heavy tailed band matrices]{Spectral measure of heavy tailed 
band and covariance random matrices}
\author[Serban Belinschi,\,\,
Amir Dembo\,\, Alice Guionnet]{
Serban Belinschi$^\star$\,\,
Amir Dembo$^\dagger$\,\, Alice Guionnet$^\ddagger$}

\date{July 10, 2008; Revised: January 27, 2009.
\newline\indent 
$^\star$ University of Saskatchewan and Institute of Mathematics of the Romanian Academy, 
106 Wiggins Road, Saskatoon, Saskatchewan S7N 5E6, Canada.
E-mail: belinschi@math.usask.ca.
Supported in part by a Discovery grant from the Natural Sciences and
Engineering Research Council of Canada and a University of Saskatchewan
start-up grant.
\newline\indent 
$^\dagger$ Stanford University, Stanford, CA 94305, USA.
E-mail: amir@math.stanford.edu. 
Research partially supported by NSF grant \#DMS-0806211.
\newline\indent
$^\ddagger$ Ecole
Normale Sup\'erieure de Lyon, Unit\'e de Math\'ematiques pures et
appliqu\'ees, UMR 5669,46 All\'ee d'Italie, 69364 Lyon Cedex 07,
France. E-mail: aguionne@umpa.ens-lyon.fr and Miller institute for
Basic Research in Science, University of California Berkeley.
\newline
\newline 
{\bf AMS (2000) Subject Classification:}
{Primary: 15A52, Secondary: 30D40, 60E07, 60H25}
\newline
{\bf Keywords:} Random matrices, Levy matrices, 
$\alpha$-stable variables, Spectral measures,
Wishart matrices, Pastur-Marchenko law.
}

\maketitle

\begin{abstract}
We study the asymptotic behavior of the appropriately
scaled and possibly perturbed 
spectral measure $\mun$  of large
random real symmetric matrices with  heavy tailed 
entries.
Specifically, consider the $N\ts N$  symmetric matrix $\bY_N^\sigma$
whose $(i,j)$  
entry is  $\sigma(\frac{i}{N}, \frac{j}{N})
x_{ij}$ where $(x_{ij}, 1\le i\le j< \infty)
$ is  an infinite array of i.i.d real variables with common
distribution 
in the domain of attraction of an $\alpha$-stable law,
$\alpha\in (0,2)$,
and $\sigma$ is a deterministic 
function.
For random diagonal $\bD_N$ independent of $\bY_N^\sigma$
and with appropriate rescaling $a_N$,
we prove that $\mun_{a_N^{-1} \bY_N^\sigma + \bD_N}$ 
converges in mean towards a limiting probability measure 
which we characterize. As a special case, we derive 
and analyze the almost sure 
limiting spectral density for empirical covariance 
matrices with heavy tailed entries.
\end{abstract}

\section{Introduction}
We study the asymptotic behavior of the spectral measure of large
band random real symmetric matrices with independent (apart from 
symmetry) heavy tailed entries. Specifically, with 
$(x_{ij}, 1\le i\le j< \infty)$ an infinite array of i.i.d real variables, 
let $\bX_N$ denote the $N\ts N$ symmetric matrix given by
$$ X_N(i,j)=x_{ij}\,  \mbox{ if }\, i\le j,\, x_{ji}\, \mbox{ otherwise.}$$
Fixing $\sigma: [0,1]\ts [0,1]\ra \R$, a 
(uniformly over $1/N$-lattice grids) 
square integrable 
measurable function such that $\sigma(x,y)=\sigma(y,x)$, we 
denote by $\bY_N^\sigma$ the $N\ts N$ symmetric matrix with entries
$Y_N^\sigma(i,j)= \sigm x_{ij}$.
These matrices are sometime called 
``band matrices'' after the choice of 
$\sigma(x,y)={\bf 1}_{|x-y| \leq b}$ for some $0<b<1$
(c.f. Remark \ref{rmkbnd}). Another important special case, 
$\sigma(x,y)={\bf 1}_{(x-1/2)(1/2-y)>0}$ yields 
the spectral measure of empirical covariance matrices
$\bX_N \bX_N^t$ (as shown in Section \ref{sec:wis-conv}).

For i.i.d. entries
$(x_{ij}, 1\le i\le j\le N)$ of finite 
second moment,
it was proved by Berezin 
that the spectral measure of 
$\bA^\s_N := N^{-1/2} \bY_N^\sigma$ converges
almost surely weakly
(see a rigorous proof in \cite{KKPS}). 
More precisely, for any $z\in\C\backslash\R$ the matrices 
$\bG_N(z):=(z\bI_N-\bA_N^\s)^{-1}$ are such that 
for any bounded continuous function $\phi$,  
$$ 
\lim_{N\ra\infty}\frac{1}{N}\sum_{i=1}^N \phi(\frac{i}{N}) G_N(z)_{ii}=
\int_0^1\phi(u) K^\s_u(z) du\, \quad {\rm a.s.}
$$
with $K^\s_x(z)$ the unique solution of
$K^\s_x(z)=(z-\int_0^1 |\s(x,v)|^2 K^\s_v (z) dv)^{-1}$ 
such that $z\mapsto \int_0^1\phi(u) K^\s_u (z) du$
is analytic in $\C\backslash\R$. In particular, taking 
constant $\phi(\cdot)$ 
we have the almost sure convergence of 
the spectral measure of 
$\bA_N^\sigma$ 
to the probability measure $\mu^\sigma_2$ 
whose Cauchy-Stieltjes transform is 
\begin{equation}\label{wishartclass}
G^\sigma_2(z)=\int\frac{1}{z-\lambda}d\mu^\sigma_2(\lambda)
=\int_0^1 K^\s_v (z) dv \,.
\end{equation}

We consider here the case of heavy tailed entries, where the common
distribution of the absolute values of the $x_{ij}$'s is in the
domain of attraction of an $\alpha$-stable law, for $\alpha\in
]0,2[$. 
That is, 
there exists a slowly varying function $L(\cdot)$ 
such that for any $u>0$,
\begin{equation}
\P(|x_{ij}|\geq u)= L(u) u^{-\alpha} \,. \label{stabledomain}
\end{equation}
The normalizing constants 
\begin{equation}
a_N: = \inf\{u: \, \P[ |x_{ij}| \geq u]\le \frac{1}{N}\,\}\,, 
\label{normalisation}
\end{equation}
are then such that $a_N = L_0(N) N^{1/\alpha}$ for some 
(other) slowly varying function $L_0(\cdot)$.

Hereafter, let $\bA_N^\sigma$ denote the normalized 
matrix $\bA_N^\sigma:= a_N^{-1} \bY_N^\sigma$ having eigenvalues
$(\lambda_1$, $\cdots$, $\lambda_N)$ and the corresponding 
spectral measure
$\mun_{\bA_N^\sigma}:=\frac{1}{N}\sum_{i=1}^N \delta_{\lambda_i}$
(and when the choice of 
$\sigma(\cdot)$ is clear we also use the notations $\bY_N$ and $\bA_N$ for
$\bY_N^\sigma$ and $\bA_N^\sigma$, respectively). 
Predictions about 
the limiting spectral measure in case $\sigma(\cdot,\cdot) \equiv 1$
(the heavy tail analog of Wigner's theorem) have been made
in \cite{CB} and rigorously verified in \cite{BAG6} (c.f. 
\cite[Section 8]{BAG6}). We follow here the approach of 
\cite{BAG6},
which consists of
proving the convergence of 
the resolvent, i.e. of the mean of the 
Cauchy-Stieltjes transform 
of the spectral measure, outside of the real line, by proving 
tightness and characterizing uniquely the possible limit points.
In the latter task, for each $\alpha \in (0,2)$
the limiting spectral measure 
of $\bA_N^\s$
is characterized in terms of the entire functions
\begin{eqnarray}\label{defg}
g_{\alpha}(y)&: =&
\int_0^\infty t^{\frac{\alpha}{2}-1} e^{-t} \exp\{-
t^{\frac{\alpha}{2}} y\} dt \,, \\
\label{deffal}
h_\alpha(y) &:=& \int_0^\infty \! e^{-t}
\exp\{-t^{\frac{\alpha}{2}} y\} dt 
= 1- \frac{\alpha}{2} y g_\alpha (y) \,.
\end{eqnarray}
We define 
for any $\alpha\in (0,2)$ the usual branch of the power function 
$x\mapsto x^{\alpha}$, which is the analytic function on 
$\C\backslash \R^-$
such that $(i)^{\alpha}= e^{i\frac{\pi\alpha}{2}}$. This amounts to
choosing $x^\alpha= r^\alpha e^{i \alpha \theta}$
when $x=r e^{i\theta}$ with $\theta\in ]-\pi,\pi[$. We also 
adopt throughout the notation $x^{-\alpha}$ for $(x^{-1})^\alpha$.
With these notations in place, recall \cite[Theorem 1.4]{BAG6} 
that in case $\sigma(\cdot,\cdot)\equiv 1$, the limiting spectral 
measure $\mu_\alpha$ 
for Wigner matrices with entries in the domain 
of attraction of an $\alpha$-stable law 
has for $z \in \C^+ = \{ z \in \C : \Im(z)>0 \}$, 
the Cauchy-Stieltjes transform 
\begin{equation}\label{new-amir-eq3}
G_\alpha(z) 
:= \int \frac{1}{z-x} d\mu_\alpha(x) = \frac{1}{z} h_\alpha(Y(z)) \,,
\end{equation}
where $Y(z)$ is the unique analytic on $\C^+$ solution of 
\begin{equation}\label{eq:BAG6}
z^\a Y(z) = C_\alpha g_\alpha(Y(z))
\end{equation}
tending to zero at infinity, and 
$C_\alpha:=
i^{\alpha} \Gamma(1-\frac{\alpha}{2})/\Gamma(\frac{\alpha}{2})$. 
In \cite[Theorem 1.6]{BAG6}  it is further shown that $\mu_\alpha$
has a smooth symmetric density $\rho_\alpha$ outside a compact 
set of capacity zero, and that 
$t^{\a+1} \rho_\alpha(t) \to \a/2$ as $t \to \infty$. 

In addition to considering the more general case of band matrices, 
we devote some effort to the analysis of the limiting 
Cauchy-Stieltjes transform as $\Im(z) \to 0$ and its consequences 
on existence and regularity of the limiting density.
For example, as a by product of our analysis  
we prove the following about $\mu_\alpha$
of \cite{BAG6}, showing in particular that it has a 
uniformly bounded density.
\begin{prop}\label{gammaone} 
The unique analytic on $\C^+$ 
solution $Y(z)$ of 
\eqref{eq:BAG6} tending to zero at
infinity takes values in the set 
${\mathcal K}_\alpha:=
\{Re^{i\theta}:
|\theta|\le \frac{\alpha\pi}{2}, \, R \geq 0\}$ on which  
$g_\alpha(\cdot)$ is uniformly bounded. Its
continuous extension to $\R \setminus \{0\}$ 
is analytic except possibly at the finite set 
$\DD_\alpha = \{0 , \pm t: t^\alpha = C_\alpha g_\alpha'(y) >0,
y \in {\mathcal K}_\alpha, g_\alpha(y)=y g_\alpha'(y) \}$. 
Further, the symmetric uniformly bounded density of $\mu_\alpha$ is 
\begin{equation}\label{rhoalpha}
\rho_\alpha (t) = -\frac{1}{\pi t} 
\Im\big(h_\alpha(Y(t)) \big) 
=\frac{\alpha |t|^{\alpha-1}}{2 |C_\a| \pi} \Im\big(i^{-\a} Y(|t|)^2\big) 
\,,
\end{equation}
continuous at $t \ne 0$, real-analytic outside $\DD_\alpha$ and 
non-vanishing on any open interval.
\end{prop}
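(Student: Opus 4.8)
The plan is to establish the five assertions bundled in the statement in the order: (i) $g_\alpha$ is uniformly bounded on $\KK_\alpha$; (ii) $Y(\C^+)\subseteq\KK_\alpha$; (iii) $Y$ extends continuously to $\R\setminus\{0\}$, analytically off $\DD_\alpha$, and $\DD_\alpha$ is finite; (iv) the inversion formula \eqref{rhoalpha} and the boundedness/continuity/real-analyticity of $\rho_\alpha$; (v) $\rho_\alpha$ vanishes on no open interval. For (i) I would substitute $s=t^{\alpha/2}$ in \eqref{defg} to get $g_\alpha(y)=\frac2\alpha\int_0^\infty e^{-s^{2/\alpha}-sy}\,ds$, exhibiting $g_\alpha$ as a Laplace-type transform, and then for $y=Re^{i\theta}\in\KK_\alpha$ rotate the contour onto a ray $\arg s=-\theta'$, with $\theta'$ of the sign of $\theta$, chosen so that $\Re(s^{2/\alpha})\ge0$ (which needs $|\theta'|\le\alpha\pi/4$) and $\Re(sy)\ge0$ (which needs $|\theta-\theta'|\le\pi/2$); the two constraints are jointly satisfiable precisely because $\alpha<2$. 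After rotation $|g_\alpha(y)|\le\frac2\alpha\int_0^\infty e^{-\rho^{2/\alpha}\cos(2\theta'/\alpha)}\,d\rho$, independent of $R$; covering $\KK_\alpha$ by finitely many sub-sectors gives the uniform bound, and in fact $g_\alpha(y)\to0$ as $|y|\to\infty$ in $\KK_\alpha$, with $g_\alpha(y)\sim 2/(\alpha y)$.

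The heart of the matter is (ii), and I expect this to be the main obstacle. I would invoke the part of the analysis of \cite{BAG6} (equivalently the corresponding lemma of the present paper with $\sigma\equiv1$), in which $Y(z)$ is realized as the limit of the iteration $y\mapsto C_\alpha z^{-\alpha}g_\alpha(y)$ started at $0$ and tending to $0$ at infinity, and show that the relevant region is stable under this map. A naive argument count does not close $\KK_\alpha$ (its half-angle $\alpha\pi/2$ and that of $C_\alpha z^{-\alpha}$, $z\in\C^+$, add up to more than $\alpha\pi/2$), so the real content is a finer estimate on $\arg g_\alpha(y)$ for $y\in\KK_\alpha$. The convenient tool is the representation $h_\alpha(y)=\E[(1+y^{2/\alpha}V)^{-1}]$ with $V$ a positive $\tfrac\alpha2$-stable variable, obtained from \eqref{deffal} by writing $e^{-t^{\alpha/2}y}$ as the value at $ty^{2/\alpha}$ of the Laplace transform of the law of $V$ and applying Fubini (valid for $|\arg y|<\alpha\pi/2$), whence $g_\alpha(y)=\tfrac2\alpha y^{2/\alpha-1}\E[V(1+y^{2/\alpha}V)^{-1}]$. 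This confines $\arg g_\alpha(y)$ to an explicit interval depending on $\arg(y^{2/\alpha})$, from which one checks that a suitable $z$-dependent subset of $\KK_\alpha$ is forward invariant under $y\mapsto C_\alpha z^{-\alpha}g_\alpha(y)$, the union of these subsets over $z\in\C^+$ being $\KK_\alpha$; since $0\in\KK_\alpha$ and $\KK_\alpha$ is closed, $Y(z)\in\KK_\alpha$.

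For (iii), fix $t_0\in\R\setminus\{0\}$; by \eqref{eq:BAG6} and (i)--(ii), $|Y(z)|\le|C_\alpha|\sup_{\KK_\alpha}|g_\alpha|\,|z|^{-\alpha}$ is bounded near $t_0$, so every limit of $Y(z)$ as $\C^+\ni z\to t_0$ lies in $\KK_\alpha$ and solves $t_0^\alpha y=C_\alpha g_\alpha(y)$. Putting $\Psi(y):=C_\alpha g_\alpha(y)/y$, so that $z^\alpha=\Psi(Y(z))$ and $\Psi'(y)=C_\alpha(yg_\alpha'(y)-g_\alpha(y))/y^2$, such a limit point $y^*$ has $\Psi'(y^*)\ne0$ unless $t_0\in\DD_\alpha$ (if $yg_\alpha'(y)=g_\alpha(y)$ then $\Psi(y)=C_\alpha g_\alpha'(y)$, so $t_0^\alpha=C_\alpha g_\alpha'(y^*)>0$, which is exactly the defining condition of $\DD_\alpha$). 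Hence away from $\DD_\alpha$ the implicit function theorem continues $Y$ analytically across $t_0$, which identifies the limit and gives continuity, while at the points of $\DD_\alpha\setminus\{0\}$ the extension is still continuous through a finite-order branch point (the point $0$, where $Y(t)^2\sim 2C_\alpha/(\alpha t^\alpha)\to\infty$, being excluded); thus the non-analyticity set on $\R\setminus\{0\}$ is contained in $\DD_\alpha$. Finiteness of $\DD_\alpha$: the relevant $y$'s are zeros of the entire function $g_\alpha(y)-yg_\alpha'(y)$, which is not identically zero (it equals $\Gamma(\alpha/2)\ne0$ at $y=0$), and they lie in a compact subset of $\C$ because $Y(t)\to0$ as $|t|\to\infty$ while $Y(t)\to\infty$ as $t\to0$, ruling out the extreme values of $Y(t)$; a compact set of isolated zeros is finite.

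For (iv), combining \eqref{new-amir-eq3} with the continuous extension gives, by Stieltjes inversion, $\rho_\alpha(t)=-\frac1\pi\lim_{\varepsilon\downarrow0}\Im G_\alpha(t+i\varepsilon)=-\frac1{\pi t}\Im h_\alpha(Y(t))$ for $t\ne0$, and substituting $g_\alpha(Y(t))=t^\alpha Y(t)/C_\alpha$ into $h_\alpha=1-\tfrac\alpha2 yg_\alpha(y)$ together with $\arg C_\alpha=\alpha\pi/2$ produces the second form in \eqref{rhoalpha}; symmetry of $\rho_\alpha$ comes from $Y(-t)=\overline{Y(t)}$, continuity at $t\ne0$ and real-analyticity off $\DD_\alpha$ from (iii), and uniform boundedness from $|Y(t)|=O(|t|^{-\alpha})$ at infinity (so $\rho_\alpha(t)=O(|t|^{-\alpha-1})$) and from $Y(t)^2\sim 2C_\alpha/(\alpha t^\alpha)$, $h_\alpha(Y(t))\to0$ near $0$ (so $\Im h_\alpha(Y(t))=O(|t|)$, hence $\rho_\alpha(t)=O(1)$), together with local boundedness on compact subsets of $\R\setminus\{0\}$. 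Finally, for (v): were $\rho_\alpha$ identically zero on an open interval, real-analyticity would propagate this to the whole component of $\R\setminus\DD_\alpha$ containing it (an interval, as $\DD_\alpha$ is finite); the unbounded components are excluded since $t^{\alpha+1}\rho_\alpha(t)\to\alpha/2\ne0$ as $t\to\infty$ by \cite[Theorem 1.6]{BAG6}, and on a bounded component $\Im(i^{-\alpha}Y(t)^2)\equiv0$ together with $Y(t)\in\KK_\alpha\setminus\{0\}$ forces $\arg Y(t)$ to be constantly $\alpha\pi/4$ or $\alpha\pi/4-\pi/2$; inserting $Y(t)=re^{i\arg Y(t)}$ into \eqref{eq:BAG6} and computing $\arg g_\alpha(Y(t))$ from $g_\alpha(y)=\tfrac2\alpha y^{2/\alpha-1}\E[V(1+y^{2/\alpha}V)^{-1}]$ (the expectation lying in an explicit open sector determined by $\arg(y^{2/\alpha})$) shows the value of $\arg g_\alpha(Y(t))$ demanded by the equation falls strictly outside its attainable range — a contradiction. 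Hence $\rho_\alpha$ is non-vanishing on every open interval.
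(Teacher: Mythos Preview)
Your outline is reasonable, but it diverges from the paper at exactly the point you flag as ``the main obstacle,'' and there the argument is genuinely incomplete. For (ii) the paper does \emph{not} try to show $\KK_\alpha$ (or a $z$-dependent subset) is forward-invariant under $y\mapsto C_\alpha z^{-\alpha}g_\alpha(y)$. Instead it uses the probabilistic origin of $Y$: with $\sigma\equiv1$ one has $Y(z)=(-z)^{-\alpha/2}\Gamma(1-\tfrac\alpha2)X(z)$, where $X(z)$ is the limit of $X_N(z)=\E[L_N^{z,\kappa}(x^{\alpha/2})]$. Since each $L_N^{z,\kappa}$ is supported on $\C^-$, its image under $x\mapsto x^{\alpha/2}$ lies in $\widehat{\KK}_\alpha=\{R_0e^{i\varphi}:-\tfrac{\alpha\pi}2\le\varphi\le0\}$, so $X(z)\in\widehat{\KK}_\alpha$; multiplying by $(-z)^{-\alpha/2}$ then gives $Y(z)\in\KK_\alpha$ via the argument identity $0\le\arg Y(z)+\tfrac\alpha2\arg z\le\tfrac{\alpha\pi}2$ (Lemma~\ref{boundg}). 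Your stable-variable representation of $g_\alpha$ is correct and the resulting sector for $\arg g_\alpha(y)$ is useful information, but you never actually exhibit an invariant region; ``from which one checks'' is precisely the missing step, and you yourself note the naive count fails. The paper's route also yields the refinement $\arg Y(t)\in[0,\tfrac{\alpha\pi}2]$ for $t>0$, which it uses in (v) to pin down $\arg Y(t)=\tfrac{\alpha\pi}4$ and then derive a contradiction via the entire function $r\mapsto\Im h_\alpha(e^{i\alpha\pi/4}r)$ and the identity theorem; your component-by-component analysis and unverified ``argument-range'' contradiction are a harder substitute.

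There is a second gap in (iv): from $h_\alpha(Y(t))\to0$ you cannot conclude $\Im h_\alpha(Y(t))=O(|t|)$. The asymptotic $Y(t)^2\sim 2C_\alpha/(\alpha t^\alpha)$ makes $i^{-\alpha}Y(t)^2$ \emph{real} to leading order, so \eqref{rhoalpha} only gives $\rho_\alpha(t)=o(|t|^{-1})$ near $0$. The paper instead proves $|G_\alpha(z)|\le c_3|Y(z)g_\alpha(Y(z))|^{-1/\alpha}$ from the chain $|h_\alpha(y)|\le c_0 h_\alpha(\xi|y|)\le c_0c_1(\xi|y|)^{-2/\alpha}$ on $\KK_\alpha$ combined with \eqref{eq:BAG6}, and then uses $Y(z)g_\alpha(Y(z))\to2/\alpha$ as $z\to0$ to bound the right-hand side. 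For the continuous extension at $t>0$, note also that the paper's argument is shorter than your branch-point discussion: the cluster set of $Y$ at $t$ is connected and sits inside the discrete zero set of the nontrivial entire function $y\mapsto t^\alpha y-C_\alpha g_\alpha(y)$, hence is a single point.
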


\begin{rmk}\label{rem-alpha-cont}
It is noted in \cite[Remark 1.5]{BAG6} that 
$\a \mapsto \mu_\a$ is continuous on $(0,2)$ 
with respect to weak convergence of probability 
measures. We further show in Lemma \ref{alice-cont}
that as $\a \to 2$ the measures $\mu_\a$ converge
to the semi-circle law $\mu_2$.
\end{rmk}

Let $\PC$ denote the set of piecewise constant functions
$\sigma(x,y)$ such that for some finite $q$, 
some $0=b_0<b_1<\cdots<b_{q}=1$ and a $q \ts q$ 
symmetric matrix of entries $\{\sigma_{rs}, 1 \le r,s\le q\}$,
\begin{equation}\label{eq:sPCi}
\sigma(x,y)=\sigma_{rs}\qquad \mbox{\rm for all} \quad
(x,y) \in (b_{r-1},b_r] \ts (b_{s-1},b_s] \; .
\end{equation}
Our next result
provides the weak convergence of the spectral measures for
$\bA_N^\sigma$ and characterizes 
the Cauchy-Stieltjes transform of their limit, in case 
$\sigma \in \PC$. 
Even for $\sigma(\cdot,\cdot) \equiv 1$ it goes beyond 
the results of \cite{BAG6} by strengthening 
the weak convergence of the expected spectral measures  
$\E[\mun_{\bA_N}]$ to the weak convergence 
of $\mun_{\bA_N}$ holding with probability one. 
A special interesting case of $\sigma$ is when 
$q=2$ and $\sigma_{rs}={\bf 1}_{|r-s|=1}$, out of 
which we get the spectral measure of the empirical 
covariance matrices $a_N^{-2} \bX_N \bX_N^t$ 
(c.f. Theorem \ref{wishart-amir}
and its proof in Section \ref{sec:wis}).

\begin{theo}\label{theo-limitpoint-uniq-amir}
Fixing $\sigma \in \PC$, let $\D_r=b_r-b_{r-1}$ for
$r=1,\ldots,q$. With probability one,
the sequence $\mun_{\bA_N^\sigma}$ 
converges weakly towards 
the non-random, symmetric probability measure $\mu^{\sigma}$. 
The limiting measure has a
continuous density
$\rho^\s$ on $\R\backslash\{0\}$ which is 
bounded off zero, 
and its Cauchy-Stieltjes transform is, for any $z\in \C^+ $, 
\begin{equation}\label{eqG} 
G_{\alpha,\sigma}(z):=\int \frac{1}{z-x}d\mu^{\sigma}(x)=
\frac{1}{z} \sum_{s=1}^q \D_s h_\alpha(Y_s(z))  \,,
\end{equation}
where $\uu{Y}(z) \equiv (Y_r(z), 1\le r\le q)$ is the unique solution of 
\begin{equation}\label{systemeqY}
z^\alpha Y_r(z)=C_\a \sum_{s=1}^q |\sigma_{rs}|^\alpha \D_s
 g_{\alpha}(Y_s(z))  \,,
\end{equation}
composed of functions that are analytic on $z\in \C^+$ and  
tend to zero as $|z| \to \infty$. Moreover, 
$z^\alpha \uu{Y} (z)$ is uniformly bounded on $\C^+$,
both $G_{\alpha,\sigma}(z)$ and
$\uu{Y}(z) \in ({\mathcal K}_\alpha)^q$
have continuous, algebraic extensions to 
$\reals \setminus \{0\}$, 
and for some $R=R(\sigma)$ finite the mapping $\uu{Y}(z)$
extends analytically through the subset $(R,\infty)$  
%
where $\rho^\s(t)=-\frac{1}{\pi t}
\sum_{s=1}^q \D_s \Im\big(h_\alpha(Y_s(t))\big)$
is real-analytic. 
Finally, the map $z\mapsto \uu{Y}(z)$ is injective 
whenever $\sigma\not\equiv 0$.
\end{theo}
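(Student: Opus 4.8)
The plan is to follow the general strategy of \cite{BAG6} — establishing tightness of $\mun_{\bA_N^\sigma}$, then identifying all possible limit points as satisfying the fixed-point system \eqref{systemeqY} — and to augment it with (a) concentration arguments upgrading convergence in mean to almost sure weak convergence, (b) a uniqueness argument for the fixed-point system, and (c) a boundary analysis yielding regularity of the limiting density. I would begin with the combinatorial/probabilistic core: partition the matrix $\bA_N^\sigma$ according to the $q$ blocks $(b_{r-1},b_r]$, and for $z \in \C^+$ write down recursions for the block-averaged resolvent diagonal entries $\frac1N\sum_{i/N \in (b_{r-1},b_r]} G_N(z)_{ii}$ using the Schur complement (resolvent expansion) formula. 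The heavy-tailed input enters exactly as in \cite[Section 8]{BAG6}: the relevant quantities are the ``local'' sums $\sum_j |\sigma_{rs_j}|^\alpha a_N^{-\alpha}|x_{ij}|^\alpha$ which, by \eqref{stabledomain}–\eqref{normalisation}, converge to $\alpha$-stable (Poissonian) sums, and the Laplace-transform representations \eqref{defg}–\eqref{deffal} of $g_\alpha, h_\alpha$ arise from integrating out these stable variables. The functions $Y_r(z)$ are the limits of the block-averaged $z G_N(z)_{ii}$-type objects, and passing to the limit in the resolvent recursion gives \eqref{systemeqY} and the formula \eqref{eqG} for $G_{\alpha,\sigma}$.

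Next I would treat the upgrade to almost sure convergence. Since $\sigma \in \PC$ is bounded, the map $(x_{ij}) \mapsto \frac1N \operatorname{tr} G_N(z)$ is Lipschitz in each coordinate with a controlled constant, but the heavy tails obstruct a naive bounded-differences argument; the standard fix (used in this literature) is to truncate the entries at level $a_N/\log N$ or similar, bound the rank/Hilbert–Schmidt perturbation caused by the few exceptional large entries via the resolvent identity $\|G_N - \widetilde G_N\| \le \|G_N\|\,\|\widetilde G_N\|\,\|\bA_N - \widetilde{\bA}_N\|$, show the truncated matrix concentrates (Azuma–Hoeffding on the martingale of the bounded-entry filtration, or a Poincaré/log-Sobolev input), and take $N \to \infty$ then remove the truncation. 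Combined with convergence of the mean (from the first step) this gives $\frac1N\operatorname{tr} G_N(z) \to G_{\alpha,\sigma}(z)$ a.s.\ for each fixed $z \in \C^+$, hence (by a countable dense set of $z$'s plus analyticity, and Portmanteau) a.s.\ weak convergence of $\mun_{\bA_N^\sigma}$ to the measure $\mu^\sigma$ with Cauchy–Stieltjes transform $G_{\alpha,\sigma}$.

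For the structural claims I would argue as follows. Uniqueness of the solution $\uu{Y}(z)$ of \eqref{systemeqY} within the class of $\C^+$-analytic, vanishing-at-infinity functions: rewrite the system as a fixed point $\uu{Y} = \Phi_z(\uu{Y})$ on the product $(\KK_\alpha)^q$ and show $\Phi_z$ is a strict contraction for $\Im z$ large (using that $g_\alpha$ and its derivative are uniformly bounded on $\KK_\alpha$ by Proposition \ref{gammaone}'s mechanism, so that $\|\Phi_z(\uu{Y}) - \Phi_z(\uu{Y}')\| \le C|z|^{-\alpha}\|\uu{Y}-\uu{Y}'\|$), then propagate uniqueness to all of $\C^+$ by analytic continuation along the connected set $\C^+$ — any two solutions agreeing near $i\infty$ agree everywhere. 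That $\uu{Y}$ takes values in $(\KK_\alpha)^q$ and $z^\alpha \uu{Y}(z)$ is bounded follows by a Denjoy–Wolff / invariant-domain argument: $\KK_\alpha$ is, as in Proposition \ref{gammaone}, forward-invariant under the relevant maps because each $g_\alpha(Y_s)$ lies in (a scalar multiple of) $\KK_\alpha$ and $C_\alpha = i^\alpha\Gamma(1-\tfrac\alpha2)/\Gamma(\tfrac\alpha2)$ rotates into the right sector. The continuous algebraic extension to $\R\setminus\{0\}$: the system \eqref{systemeqY} is polynomial in $z^\alpha$ and in $(Y_r)$ once one clears the entire functions locally, so $\uu{Y}$ is an algebraic function of $z^\alpha$; its branch points form a finite set, off which one has local analytic continuation, and boundedness gives continuity of the extension to the real axis minus that finite set — for $|t| > R(\sigma)$ with $R$ large, the contraction estimate survives down to the real line, giving genuine analytic extension through $(R,\infty)$ and hence real-analyticity of $\rho^\sigma$ there via $\rho^\sigma(t) = -\tfrac{1}{\pi t}\sum_s \D_s \Im h_\alpha(Y_s(t))$. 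Finally, injectivity of $z \mapsto \uu{Y}(z)$ when $\sigma \not\equiv 0$: from \eqref{systemeqY}, $z^\alpha$ is determined by $\uu{Y}(z)$ as the ratio $C_\alpha \sum_s |\sigma_{rs}|^\alpha \D_s g_\alpha(Y_s)/Y_r$ for any $r$ with $\sum_s|\sigma_{rs}|^\alpha\D_s \ne 0$ (such $r$ exists since $\sigma\not\equiv0$), and since $z \mapsto z^\alpha$ is injective on $\C^+$, the map $\uu{Y}$ must be injective too. \emph{The main obstacle} I expect is not any single one of these items but the passage-to-the-limit in the block resolvent recursion in the heavy-tailed regime while simultaneously tracking the $q$-block structure: one must show that the off-diagonal and cross-block error terms in the Schur complement expansion vanish, that the empirical block-sums of $|x_{ij}|^\alpha$ genuinely converge to the Poissonian stable limits uniformly enough to integrate against the resolvent, and that the limit is deterministic — this is where the bulk of the technical work (self-averaging of the local stable sums, $\alpha<1$ versus $\alpha\ge1$ distinctions, uniform integrability under $a_N$-scaling) resides, essentially reproving the analog of \cite[Section 8]{BAG6} in the presence of the finitely many variances $\{\sigma_{rs}\}$.
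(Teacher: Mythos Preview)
Your overall plan is close to the paper's, and several pieces (block-wise Schur complement, truncation then concentration, contraction for uniqueness near $i\infty$ followed by analytic continuation, the injectivity argument via recovering $z^\alpha$ from $\uu{Y}$) match what the paper does. Two smaller points where you diverge: first, the objects $Y_r(z)$ are \emph{not} limits of block-averages of $zG_N(z)_{ii}$; the paper passes through $X_r(z)=\lim_N \E[L^{z,\kappa}_{N,r}(x^{\alpha/2})]$, i.e.\ the $\alpha/2$-moment of the block law of diagonal resolvent entries, and then sets $Y_r(z)=(-z)^{-\alpha/2}\wh X_r(z)$ with $\wh X_r$ the linear combination \eqref{eq:yrdef}. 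Second, the concentration bound (Lemma \ref{concentration}) only gives polynomial decay $N^{-\varepsilon}$, which is not summable; the paper therefore proves a.s.\ convergence first along the sparse subsequence $\phi(n)=[n^{2/\varepsilon}]$ via Borel--Cantelli and then fills in the gaps by an interpolation lemma (Lemma \ref{lem:interp}) specific to $\sigma\in\PC$. Your sketch does not address this.

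The genuine gap, however, is your argument for the continuous (``algebraic'') extension of $\uu{Y}$ to $\R\setminus\{0\}$. You write that ``the system \eqref{systemeqY} is polynomial in $z^\alpha$ and in $(Y_r)$ once one clears the entire functions locally, so $\uu{Y}$ is an algebraic function of $z^\alpha$; its branch points form a finite set''. This is false: $g_\alpha$ and $h_\alpha$ are transcendental entire functions, the system is \emph{not} polynomial, and $\uu{Y}$ is \emph{not} an algebraic function of $z^\alpha$ in any standard sense. There is no ``clearing the entire functions locally'' that would produce a global polynomial relation with finitely many branch points. The paper's mechanism is entirely different and substantially deeper: one views the graph $\Vb=\{(u,\uu{V}(u))\}$ (with $u=z^{-\alpha}$) as a one-dimensional complex manifold contained in the analytic set $\Ab=\{\uu{F}=0\}\subset\C^{q+1}$, shows $\Vb$ lies in a single one-dimensional \emph{irreducible component} $\oo{\Sb}$ of $\Ab$ (this uses that the Jacobian of $\uu{F}$ is nondegenerate at $(0,\uu 0)$), and then argues that at a boundary point $x$ the cluster set $Cl(x)$ is connected and contained in $\oo{\Sb}\cap\{u=x\}$, which has dimension $0$ because $\oo{\Sb}$ is irreducible and not contained in the hyperplane, hence is discrete; a connected discrete set is a point, giving continuity of the extension (Proposition \ref{prop:analytic} and Lemma \ref{lem:extension}). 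This several-complex-variables argument is the crux of the regularity statement and is not replaceable by an algebraicity claim.
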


\begin{rmk} 
The measure 
$\mu^\s$ may have an atom at zero
when $q > 1$. Indeed, Theorem \ref{wishart-amir}
provides one such example in case $q=2$.
\end{rmk}

\begin{rmk}
While we do not pursue it here, 
similarly to \cite[Section 9]{BAG6}, one can apply
the moment method developed by Zakharevich \cite{zakh},
to characterize $\mu^\s$ as the weak limit
$B \to \infty$ of the limiting spectral measures
for appropriately truncated matrices $\bA_N^{\s,B}$.
As done in Lemma \ref{alice-cont} for $\s \equiv 1$,
we expect this to yield the continuity of $\mu^\s$ 
with respect to $\a \to 2$, for each fixed 
$\s \in \PC$, i.e. to connect the
limiting measures of Theorem \ref{theo-limitpoint-uniq-amir}
to $\mu^\s_2$ of (\ref{wishartclass}).
\end{rmk}

Let $L^2_\star([0,1]^2)$ denote the space of 
equivalence classes with respect to the semi-norm 
$$
\|f\|_\star := \limsup_{n \to \infty} 
\| f(n^{-1} \lceil nx \rceil,n^{-1} \lceil ny\rceil) \|_2
\,,
$$
on the space of functions on $[0,1]^2$ for which 
$\|\cdot\|_\star$ is finite.
For each measurable $f:[0,1]^2 \mapsto \R$ let
$\|f\| := \| \int_0^1 |f(x,v)| dv \|_\infty$ denote
the associated operator norm, where   
$\|\cdot\|_\infty$ denotes hereafter the usual 
(essential-sup) norm of $L^\infty((0,1])$. 
We consider the subset ${\mathcal F}_\alpha$ of those
symmetric 
measurable 
functions $\s \in L^2_\star ([0,1]^2)$ 
with $\| \, |\s|^\alpha \|$ finite  
which are each the $L^2_\star$-limit of some sequence $\s_p\in\PC$ such that
\begin{equation}\label{domaine}
\lim_{p\ra\infty}\| \, |\s_p|^\alpha - |\s|^\alpha \|=0\,.
\end{equation}
In fact, to verify that $\s \in \FF_\alpha$ it suffices to 
check that $\| \, |\s|^\alpha \|$ is finite and  
find $L^2_\star$-approximation of $\s(\cdot,\cdot)$ by bounded continuous 
symmetric functions $\s_p(\cdot,\cdot)$ for which 
\eqref{domaine} holds. 
Obviously ${\mathcal F}_\alpha$ contains all 
bounded continuous symmetric functions on $[0,1]^2$ 
(but for example 
$\sigma(x,y)=1/\sqrt{x+y} \in L^2_\star([0,1]^2)$ is not in $\FF_\alpha$).

\begin{rmk}
Things are a bit simpler if in the definition of the matrix
$\bY_N^\sigma$ one replaces the sample $\sigm$ by the 
average of $\sigma(\cdot,\cdot)$ with respect to 
Lebesgue measure on $(\frac{i-1}{N},\frac{i}{N}] \times (\frac{j-1}{N},
\frac{j}{N}]$, for then we can replace throughout this paper
the semi-norm $\|\cdot\|_\star$ and the  
space $L^2_\star([0,1]^2)$ by the usual $L^2$-norm and space.
\end{rmk}

We further say that $\s \in \FF_\alpha$ is equivalent to 
$\wt{\s} \in \PC$ if for the relevant finite partition
$0=b_0<b_1<\cdots<b_{q}=1$ we have for any $1 \le r,s\le q$ that 
$$
\int_{b_{s-1}}^{b_s} |\s(x,v)|^\alpha dv = |\wt{\s}_{rs}|^\alpha 
\qquad \mbox{\rm for all} \quad x \in (b_{r-1},b_r] \,.
$$

Extending Theorem \ref{theo-limitpoint-uniq-amir} 
we next characterize the Cauchy-Stieltjes transform of $\mu^{\sigma}$
for any $\s \in \FF_\alpha$.

\begin{theo}\label{weakening}
Given $\sigma\in {\mathcal F}_\alpha$, 
the sequence $\E[\mun_{\bA_N^\sigma}]$ converges 
weakly towards the symmetric 
probability measure $\mu^{\sigma}$ 
such that
for some $R=R(\s)$ finite, 
\begin{equation}\label{eqGlim}
\int \frac{1}{z-x} d\mu^\sigma(x) =
\frac{1}{z} \int_0^1 h_\alpha(Y^\s_v(z)) dv 
\end{equation}
and $Y^\s$ is the unique analytic mapping
$Y^\sigma:\C^+ \mapsto L^\infty((0,1];{\mathcal K}_\alpha)$
such that if $|z| \ge R$ then 
for almost every $x \in (0,1]$ 
\begin{equation}\label{systeqlim}
z^\alpha Y^{\sigma}_x(z)= C_\alpha \int_0^1 |\sigma(x,v)|^\alpha
g_\alpha(Y_v^\sigma(z)) dv \,. 
\end{equation}
The measure $\mu^\s$ has a density $\rho^\s$ 
on $\R\backslash\{0\}$ which is bounded off zero 
and such that 
$t^{\a+1}\rho^\s(t) \to \frac{\alpha}{2} 
\int |\s(x,v)|^\a dx dv$ as $t \to \infty$.

Further, 
if  $\s \in \FF_\alpha$ is equivalent to 
$\wt{\s} \in \PC$ then $\mu^\s=\mu^{\wt{\s}}$.
\end{theo}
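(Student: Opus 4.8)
The plan is to prove Theorem \ref{weakening} in three stages: first establish the convergence of $\E[\mun_{\bA_N^\sigma}]$ and identify the limiting transform via approximation by the piecewise-constant case already settled in Theorem \ref{theo-limitpoint-uniq-amir}; second, deduce the stated fixed-point equation \eqref{systeqlim} for $Y^\sigma$ on an infinite-variance-type operator space and its uniqueness; third, extract the tail asymptotics $t^{\alpha+1}\rho^\sigma(t)\to\frac{\alpha}{2}\int|\sigma|^\alpha$ and the last sentence on equivalence. For the first stage, fix $\sigma\in\FF_\alpha$ and pick $\sigma_p\in\PC$ with $\sigma_p\to\sigma$ in $L^2_\star$ and $\||\sigma_p|^\alpha-|\sigma|^\alpha\|\to 0$. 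The key estimate is a stability bound: if $\bA_N^{\sigma}$ and $\bA_N^{\sigma_p}$ are built from the \emph{same} array $(x_{ij})$, then the expected Cauchy-Stieltjes transforms are close, uniformly in $N$, in terms of $\||\sigma_p|^\alpha-|\sigma|^\alpha\|$ and the $L^2_\star$ distance. This should follow by the same resolvent/tightness machinery used for \cite{BAG6} and for Theorem \ref{theo-limitpoint-uniq-amir}: the heavy-tailed structure means only the large entries matter, and these are controlled by $|\sigma|^\alpha$; a Poissonization argument (as in \cite{BAG6}) reduces the resolvent diagonal entries to functionals of a Poisson point process whose intensity depends on $\sigma$ only through $\int_{(b_{s-1},b_s]}|\sigma(x,v)|^\alpha dv$. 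Combined with the already-established convergence for each $\sigma_p$, a standard $3\epsilon$-argument gives weak convergence of $\E[\mun_{\bA_N^\sigma}]$ to a limit $\mu^\sigma$, and shows $G_{\alpha,\sigma_p}\to G_{\alpha,\sigma}$ locally uniformly on $\C^+$.

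For the second stage, I would pass to the limit $p\to\infty$ in the finite system \eqref{systemeqY}. Writing $\underline{Y}^{(p)}(z)=(Y^{(p)}_r(z))_{r=1}^{q_p}$ as a step function $Y^{\sigma_p}_x(z)$ on $(0,1]$ valued in $\KK_\alpha$, the uniform bound $\|z^\alpha\underline{Y}^{(p)}\|_\infty\le$ const (from Theorem \ref{theo-limitpoint-uniq-amir}) together with equicontinuity in $z$ lets me extract, for $|z|\ge R$, a limit $Y^\sigma_x(z)\in L^\infty((0,1];\KK_\alpha)$ solving \eqref{systeqlim}; here one uses $\||\sigma_p|^\alpha-|\sigma|^\alpha\|\to 0$ to control the integral operator and the fact that $g_\alpha$ is Lipschitz on the bounded set $\KK_\alpha$ (Proposition \ref{gammaone}) to control the composition. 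Uniqueness of the analytic solution $Y^\sigma$ for $|z|\ge R$ comes from a contraction argument: for $|z|$ large the map $W\mapsto C_\alpha z^{-\alpha}\int|\sigma(\cdot,v)|^\alpha g_\alpha(W_v)dv$ is a strict contraction on $L^\infty((0,1];\KK_\alpha)$ because $z^{-\alpha}\to 0$ while $g_\alpha'$ is bounded on $\KK_\alpha$; analytic continuation and injectivity then propagate from the formula \eqref{eqGlim} exactly as in Theorem \ref{theo-limitpoint-uniq-amir}. Formula \eqref{eqGlim} itself is the $p\to\infty$ limit of \eqref{eqG}, using $\sum_s\Delta_s h_\alpha(Y^{(p)}_s(z))=\int_0^1 h_\alpha(Y^{\sigma_p}_v(z))dv$ and dominated convergence (again $h_\alpha$ bounded on $\KK_\alpha$).

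For the third stage, the density $\rho^\sigma$ on $\R\setminus\{0\}$ and its boundedness off zero follow from the continuous algebraic extension of $G_{\alpha,\sigma}$ to $\R\setminus\{0\}$, inherited through the uniform limit from the $\sigma_p$ case, via the Stieltjes inversion $\rho^\sigma(t)=-\frac1\pi\Im G_{\alpha,\sigma}(t)=-\frac{1}{\pi t}\int_0^1\Im h_\alpha(Y^\sigma_v(t))dv$. The tail asymptotics is a computation as $t\to\infty$ in this formula: from \eqref{systeqlim} with $z=t$ real large, $Y^\sigma_x(t)=C_\alpha t^{-\alpha}\int|\sigma(x,v)|^\alpha dv\,(1+o(1))$ (since $g_\alpha(0)=\Gamma(\alpha/2)$ and the iteration is a contraction), hence $\Im h_\alpha(Y^\sigma_x(t))\sim -\frac{\alpha}{2}\Im Y^\sigma_x(t)\,g_\alpha(0)\cdot$(const) — more precisely one uses $h_\alpha(y)=1-\frac{\alpha}{2}yg_\alpha(y)$ and $C_\alpha=i^\alpha\Gamma(1-\frac\alpha2)/\Gamma(\frac\alpha2)$ to get $t^{\alpha+1}\rho^\sigma(t)\to\frac{\alpha}{2}\int|\sigma(x,v)|^\alpha dx\,dv$, matching the known $\sigma\equiv1$ case of \cite[Theorem 1.6]{BAG6}. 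Finally, if $\sigma$ is equivalent to $\wt\sigma\in\PC$ then $\int_{b_{s-1}}^{b_s}|\sigma(x,v)|^\alpha dv=|\wt\sigma_{rs}|^\alpha$ for $x\in(b_{r-1},b_r]$ means that $Y^\sigma_x(z)$ is necessarily constant on each $(b_{r-1},b_r]$ (the right side of \eqref{systeqlim} only depends on $x$ through that integral, and by uniqueness the solution inherits this), and plugging into \eqref{systeqlim} recovers exactly \eqref{systemeqY} for $\wt\sigma$; uniqueness then forces $\underline{Y}^{\wt\sigma}=Y^\sigma$ and \eqref{eqG} equals \eqref{eqGlim}, so $\mu^\sigma=\mu^{\wt\sigma}$. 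The main obstacle is the uniform-in-$N$ stability estimate of stage one: making precise, at the level of the Poisson point process representation of the resolvent, that an $L^2_\star$ perturbation of $\sigma$ with a small change in $\||\sigma|^\alpha\|$ produces a correspondingly small change in $\E[G_N(z)]$ requires carefully tracking the heavy-tail truncation and the non-i.i.d.\ band structure simultaneously — this is where the bulk of the technical work lies, and it is also what forces the particular definition of $\FF_\alpha$ and the condition \eqref{domaine}.
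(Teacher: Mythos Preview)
Your overall architecture matches the paper's proof in Section~4: approximate by $\sigma_p\in\PC$, pass the finite system \eqref{systemeqY} to the limit as a fixed-point equation on $L^\infty((0,1];\KK_\alpha)$ via a contraction for $|z|$ large, extend to $\C^+$ by Vitali for vector-valued holomorphic maps, and read off \eqref{eqGlim} by dominated convergence. Stage~2 and the tail computation are essentially what the paper does.

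There are, however, three places where your argument either overcomplicates or slips:

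\textbf{Stage 1.} The stability estimate you flag as ``the main obstacle'' is in fact already available as Proposition~\ref{prop-dense}, and it is much simpler than a Poissonization argument. One truncates at level $Ba_N$, applies Lidskii's theorem to the \emph{truncated} matrices to get $\E[d_1(\mun_{\bA_N^{B,\sigma}},\mun_{\bA_N^{B,\sigma_p}})]^2\le c(\alpha,B)^2\|\sigma-\sigma_p\|_\star^2$ from a second-moment bound (this is where the $L^2_\star$ norm enters), and then removes the truncation via \eqref{trunc1}. No Poisson representation is needed; the condition \eqref{domaine} is used only in Stage~2, to make $F_z(\sigma_p,\cdot)\to F_z(\sigma,\cdot)$ in operator norm.

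\textbf{Density, Stage 3.} You claim the boundedness of $\rho^\sigma$ off zero follows from a continuous extension of $G_{\alpha,\sigma}$ to $\R\setminus\{0\}$ ``inherited through the uniform limit from the $\sigma_p$ case''. This inheritance is not justified: the convergence $Y^{\sigma_p}\to Y^\sigma$ is established only on $\C^+$, and uniform limits of functions with continuous boundary values need not have continuous boundary values. The paper does \emph{not} claim a continuous extension for general $\sigma\in\FF_\alpha$; it simply observes that $h_\alpha$ is uniformly bounded on $\KK_\alpha$, hence $|G_{\alpha,\sigma}(z)|\le C/|z|$ on $\C^+$, and invokes Stieltjes--Perron to get a bounded density on $(-\delta,\delta)^c$. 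That is all that is asserted and all that is needed.

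\textbf{Equivalence.} Your argument that ``$Y^\sigma_x$ is constant on each $(b_{r-1},b_r]$ because the right side of \eqref{systeqlim} only depends on $x$ through $\int_{b_{s-1}}^{b_s}|\sigma(x,v)|^\alpha dv$'' is circular: the right side is $\int_0^1|\sigma(x,v)|^\alpha g_\alpha(Y^\sigma_v(z))dv$, and this reduces to a function of those piecewise integrals only if $Y^\sigma_v$ is \emph{already} piecewise constant. The clean route is the paper's: start from the piecewise-constant $Y^{\wt\sigma}$ solving \eqref{systemeqY} for $\wt\sigma$, check directly (using the equivalence hypothesis to evaluate $\int_{b_{s-1}}^{b_s}|\sigma(x,v)|^\alpha dv$) that it also solves \eqref{systeqlim} for $\sigma$, and then invoke uniqueness.
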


\begin{rmk} A similar invariance applies 
in case of entries with bounded variance, where the kernel $K^\s_x(z)$
that characterizes the limit law in \eqref{wishartclass}
is the same across each equivalence class of $\FF_2$.
Also note that for $\alpha=2$ we have
$C_2=-1$ and $g_2(y)=h_2(y)=1/(y+1)$ is well defined when 
$\Re(y)>-1$. Plugging the latter expressions into \eqref{eqGlim}
and \eqref{systeqlim} indeed coincide with \eqref{wishartclass}
upon setting $z K^\s_x(z)=g_2(Y^\s_x(z))=1/(1+Y^\s_x(z))$,
whereas \eqref{new-amir-eq3} and \eqref{eq:BAG6}
result for $\alpha=2$ 
with $Y(z)=-\frac{1}{z} G_2(z)$ and the 
Cauchy-Stieltjes transform 
$G_2(z)=(z-\sqrt{z^2-4})/2$ of the semi-circle law $\mu_2$
(upon properly choosing the branch of the square root).
\end{rmk}

\begin{rmk}\label{rmkbnd}
The equivalence between $\s \in \FF_\alpha$ 
and $\wt{\s} \in \PC$ is often quite useful. For example, if  
$\varphi:[-1,1] \to \R$ is any even, periodic function of 
period one and finitely many jump discontinuities
then $\s(x,y)=\varphi(x-y) \in \FF_\alpha$ 
and is equivalent to the 
constant $\wt{\s}=[\int_0^1 |\varphi(v)|^\alpha dv]^{1/\alpha}$.
Consequently, in this case $\mu^\s$ equals $\mu_\alpha(\wt{\s} \cdot)$
of \cite{BAG6} and hence has the symmetric, uniformly bounded,
continuous off zero, density 
$\wt{\s}^{-1} \rho_\alpha(t/\wt{\s})$ 
with respect to Lebesgue measure on $\R$. 
\end{rmk}

Consider next the empirical covariance matrices
$\bW_{N,M}=a_{N+M}^{-2} \bX_{N,M} \bX_{N,M}^t$ 
where $\bX_{N,M}$ is an $N\ts M$ matrix 
with heavy tailed entries 
$x_{ij}$, $1\le i\le N$, $1\le j\le M$, the law of which 
satisfies \eqref{stabledomain} (and $\bB^t$ denotes throughout the 
transpose of the matrix $\bB$). 
Taking $N \to \infty$ and $M/N \to \gamma \in (0,1]$ 
the scaling constant $a_N$ is chosen per \eqref{normalisation}
(so from \eqref{stabledomain} we have that 
$a_{N+M}^2 \sim N^{\frac{2}{\alpha}} (1+\gamma)^{2/\alpha} L_1(N)$ 
for some slowly varying function $L_1(\cdot)$). In this setting
we show the following about the
limiting spectral measure of $\bW_{N,M}$.

\begin{theo}\label{wishart-amir}
If $N \to \infty$ and  $\frac{M}{N} \to \gamma\in (0,1]$ then 
with probability one, the spectral measures $\mun_{\bW_{N,M}}$
converge to a non-random probability measure $\mu^\gamma_\alpha$. 
The probability measure $\mu_\a^1$ is absolutely continuous 
with the density 
$$
\rho^1_\alpha(t)
=2^{1/\alpha} t^{-1/2} \rho_\alpha(2^{1/\alpha} \sqrt{t}) 
$$ 
on $(0,\infty)$. 
Fixing $\gamma \in (0,1)$
let $(Y_1(z),Y_2(z))$ denote the unique analytic functions 
of 
$z \in \C^+$ 
tending to zero at infinity, such that 
\begin{equation}\label{defywis}
z^\alpha Y_1(z)=\frac{\gamma}{1+\gamma} C_\a
 g_{\alpha}(Y_2(z))\,, \qquad  z^\alpha Y_2(z) 
=\frac{1}{1+\gamma} C_\a g_{\alpha}(Y_1(z))  \,.
\end{equation}
The functions $Y_1(z)$ and $Y_2(z)$
extend continuously to functions on $(0,\infty)$ that are 
analytic through $(R,\infty)$ for some finite 
$R=R_\alpha^\gamma$.  The probability measure $\mu_\a^\gamma$ 
then has an atom at zero of mass  $1-\gamma$
and the continuous density 
\begin{equation}\label{eq:den-wis}
\rho^\gamma_\alpha(t) 
=-\frac{1}{\pi t} \Im\big( h_\alpha(Y_1(\sqrt{t}\,) \big) \,,
\end{equation}
on $(0,\infty)$ which 
is real-analytic on $(R,\infty)$, bounded off zero,  
does not vanish in any neighborhood of zero and
such that 
$t^{1+\a/2} \rho^\gamma_\alpha(t) \to \frac{\a \gamma}{2(1+\gamma)}$
as $t \to \infty$.
\end{theo}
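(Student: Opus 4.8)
The plan is to deduce Theorem~\ref{wishart-amir} from Theorem~\ref{theo-limitpoint-uniq-amir} via the linearization of the singular values of $\bX_{N,M}$. Put
\[
\bY:=\begin{pmatrix} 0 & \bX_{N,M}\\ \bX_{N,M}^t & 0\end{pmatrix},\qquad \bA:=a_{N+M}^{-1}\bY,
\]
so that $\bY^2=\diag\big(\bX_{N,M}\bX_{N,M}^t,\,\bX_{N,M}^t\bX_{N,M}\big)$ and, after relabelling the i.i.d.\ array, $\bA$ has the law of $\bA_{N+M}^{\sigma}$ for the $\sigma\in\PC$ with $q=2$, partition $\{0,\frac{N}{N+M},1\}$ and $\sigma_{rs}=\one_{|r-s|=1}$. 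Since $\frac{N}{N+M}\to\frac{1}{1+\gamma}$ and $\big|N-\frac{N+M}{1+\gamma}\big|=|N\gamma-M|/(1+\gamma)=o(N)$, the band matrices built from one array with this partition and with the limiting one $\{0,\frac1{1+\gamma},1\}$ differ only in $o(N)$ rows and columns, so by the rank inequality their spectral measures differ by $o(1)$; hence Theorem~\ref{theo-limitpoint-uniq-amir} applies (for $\sigma$ with the limiting partition) and gives, with probability one, weak convergence of $\mun_{\bA}$ to the symmetric limit $\mu^{\sigma}$ governed by \eqref{systemeqY}. For this $\sigma$ one has $\Delta_1=\frac1{1+\gamma}$, $\Delta_2=\frac\gamma{1+\gamma}$, $|\sigma_{12}|=1$, so \eqref{systemeqY} is precisely \eqref{defywis}; the uniqueness of $(Y_1,Y_2)$, the bound on $z^\alpha\uu{Y}(z)$, the continuous algebraic extension of $\uu{Y}$ to $\R\setminus\{0\}$ with values in $(\KK_\alpha)^2$, and the analytic extension through $(R(\sigma),\infty)$ are all inherited from Theorem~\ref{theo-limitpoint-uniq-amir}, and we set $R:=R(\sigma)^2$.

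To pass to $\mun_{\bW_{N,M}}$, use $z(z^2-\bA^2)^{-1}=\frac12\big((z-\bA)^{-1}+(z+\bA)^{-1}\big)$ together with the symmetry of the spectrum of $\bA$, the block-diagonal form of $\bA^2$, and the fact that $\bX_{N,M}\bX_{N,M}^t$ has exactly $N-M$ more null eigenvalues than $\bX_{N,M}^t\bX_{N,M}$, the rest coinciding; this yields, for each realization and $z\in\C^+$,
\[
(N+M)\int\frac{d\mun_{\bA}(x)}{z-x}=2zN\int\frac{d\mun_{\bW_{N,M}}(x)}{z^2-x}-\frac{N-M}{z}.
\]
Letting $N\to\infty$ with $M/N\to\gamma$, using the first step and \eqref{eqG}, together with the identity $1-h_\alpha(Y_1(z))=\gamma\big(1-h_\alpha(Y_2(z))\big)$ — obtained from \eqref{defywis} by multiplying its first equation by $Y_2$, its second by $Y_1$, equating, and using $yg_\alpha(y)=\frac2\alpha(1-h_\alpha(y))$ — the right side collapses and gives
\[
\int\frac{d\mu^\gamma_\alpha(x)}{w-x}=\frac{1}{w}\,h_\alpha\big(Y_1(\sqrt w)\big),\qquad w\in\C\setminus[0,\infty),
\]
with $\sqrt{\cdot}$ the branch into $\C^+$. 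Since $\{\mun_{\bA}\}$ is a.s.\ tight and $\mun_{\bW_{N,M}}\big([R',\infty)\big)\le\frac{N+M}{N}\mun_{\bA}\big(\{|x|\ge\sqrt{R'}\}\big)$, the family $\{\mun_{\bW_{N,M}}\}$ is a.s.\ tight, and with the a.s.\ pointwise convergence of Cauchy--Stieltjes transforms this gives the a.s.\ weak convergence of $\mun_{\bW_{N,M}}$ to $\mu^\gamma_\alpha$. Stieltjes inversion then gives the density $\rho^\gamma_\alpha(t)=-\frac1{\pi t}\Im\,h_\alpha(Y_1(\sqrt t\,))$ on $(0,\infty)$ of \eqref{eq:den-wis}, continuous since $Y_1$ extends continuously to $(0,\infty)$, bounded off zero since $z^\alpha Y_1(z)$ is bounded, and real-analytic on $(R,\infty)$ since $Y_1$ extends analytically there; and since $Y_1(z)\sim\frac{\gamma}{1+\gamma}C_\alpha g_\alpha(0)\,z^{-\alpha}$ as $z\to\infty$ (from \eqref{defywis} and $Y_2(z)\to0$), expanding $h_\alpha$ near $0$ with $g_\alpha(0)=\Gamma(\frac\alpha2)$, $h_\alpha(0)=1$, $C_\alpha=i^\alpha\Gamma(1-\frac\alpha2)/\Gamma(\frac\alpha2)$ and $\Gamma(\frac\alpha2)\Gamma(1-\frac\alpha2)=\pi/\sin(\frac{\pi\alpha}2)$ yields $t^{1+\alpha/2}\rho^\gamma_\alpha(t)\to\frac{\alpha\gamma}{2(1+\gamma)}$.

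It remains to locate the atom at zero and to treat $\gamma=1$. From the transform above and $\mu^\gamma_\alpha(\{0\})=\lim_{w\to0}w\int(w-x)^{-1}d\mu^\gamma_\alpha(x)$ we get $\mu^\gamma_\alpha(\{0\})=\lim_{z\to0}h_\alpha(Y_1(z))$, so one analyses \eqref{defywis} as $z\to0$. Since $z^\alpha\uu{Y}(z)$ stays bounded and $g_\alpha(y)\sim\frac2{\alpha y}$ (equivalently $h_\alpha(y)\to0$) as $y\to\infty$ in $\KK_\alpha$, a short case analysis shows the only self-consistent behaviours for $\gamma<1$ are $(i)$ $Y_2(z)\to\infty$, $Y_1(z)\to c_1$ with $h_\alpha(c_1)=1-\gamma$, or $(ii)$ $Y_1(z)\to\infty$, $Y_2(z)\to c_2$ with $h_\alpha(c_2)=1-\frac1\gamma$, giving atom mass $1-\gamma$ and $0$ respectively. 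That $(i)$ is the one realised follows because $\mun_{\bW_{N,M}}(\{0\})\ge\frac{N-M}{N}$ always (as ${\rm rank}\,\bX_{N,M}\le M$), so the closed-set part of the portmanteau theorem forces $\mu^\gamma_\alpha(\{0\})\ge1-\gamma>0$; hence $\mu^\gamma_\alpha(\{0\})=1-\gamma$. That $\rho^\gamma_\alpha$ does not vanish on any $(0,\eps)$ follows since the correction to $Y_1(\sqrt w)-c_1$ as $w\to0$ is of non-integer order $w^{\alpha/2}$: otherwise $w\mapsto w^{-1}\big(h_\alpha(Y_1(\sqrt w))-(1-\gamma)\big)$ would be analytic at $0$, against the $w^{\alpha/2-1}$ singularity. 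For $\gamma=1$, \eqref{defywis} is symmetric, so by uniqueness $Y_1=Y_2=:Y$ with $2z^\alpha Y=C_\alpha g_\alpha(Y)$, i.e.\ $Y(z)=\wt{Y}(2^{1/\alpha}z)$ for $\wt{Y}$ the solution of \eqref{eq:BAG6}; then $\int(w-x)^{-1}d\mu^1_\alpha(x)=\frac1w h_\alpha(\wt{Y}(2^{1/\alpha}\sqrt w))=\frac{2^{1/\alpha}}{\sqrt w}G_\alpha(2^{1/\alpha}\sqrt w)$ by \eqref{new-amir-eq3}, so $\mu^1_\alpha$ is the law of $2^{-2/\alpha}X^2$ with $X\sim\mu_\alpha$, which by Proposition~\ref{gammaone} is absolutely continuous with density $2^{1/\alpha}t^{-1/2}\rho_\alpha(2^{1/\alpha}\sqrt t)$ and no atom, consistently with $Y(z)\to\infty$ as $z\to0$.

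The main obstacle I anticipate is the edge analysis of the third paragraph: selecting branch $(i)$ and excluding vanishing of $\rho^\gamma_\alpha$ near the origin both rest on a careful local expansion of the solution of \eqref{defywis} at $z=0$, in the spirit of \cite{BAG6}. By comparison, the reduction from the $N$-dependent band structure to the fixed limit, the tightness estimates, and the interchanges of limits are routine.
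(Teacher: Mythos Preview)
Your reduction to Theorem~\ref{theo-limitpoint-uniq-amir} via the block linearization, the rank comparison with the fixed $\sigma\in\PC$, the resolvent identity linking $\mun_{\bA}$ and $\mun_{\bW_{N,M}}$, the derivation of $G_\alpha^\gamma(w)=\frac1w h_\alpha(Y_1(\sqrt w))$ through the relation $h_\alpha(Y_1)=1-\gamma+\gamma h_\alpha(Y_2)$, the density formula and its regularity, the tail asymptotics, and the $\gamma=1$ scaling are all correct and essentially match the paper (which phrases the $\gamma=1$ case via the equivalence class $\wt\sigma=2^{-1/\alpha}$ rather than the symmetry $Y_1=Y_2$, but with the same outcome). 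One minor slip: boundedness of $\rho_\alpha^\gamma$ off zero comes from $h_\alpha$ being bounded on $\KK_\alpha$, not from the boundedness of $z^\alpha Y_1(z)$.

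Where you diverge from the paper is precisely where you flag difficulty, and there the paper's route is genuinely different and your sketches do not close the gaps. For the atom, your portmanteau step cleanly gives $\mu_\alpha^\gamma(\{0\})\ge 1-\gamma$, but your ``case analysis'' does not establish that the non-tangential limit of $h_\alpha(Y_1)$ exists, nor does it exclude cluster points of $Y_2$ at zeros of $g_\alpha$ in $\KK_\alpha$; so you have no matching upper bound. The paper avoids this by taking $w=-x^2$, i.e.\ $\sqrt w=ix$ with $x>0$: from $Y_s(-\bar z)=\overline{Y_s(z)}$ one gets $Y_1(ix),Y_2(ix)\in\R$, and then \eqref{defywis} forces them into $\R^+$ where $g_\alpha>0$ and $h_\alpha$ is monotone. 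The identity $h_\alpha(Y_1(ix))=1-\gamma+\gamma h_\alpha(Y_2(ix))\ge 1-\gamma$ then bounds $Y_1(ix)$ uniformly, whence $(ix)^\alpha Y_1(ix)\to 0$, so $g_\alpha(Y_2(ix))\to 0$, so $Y_2(ix)\to\infty$ and $h_\alpha(Y_2(ix))\to 0$, giving the atom exactly $1-\gamma$ with no branch selection needed.

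For the non-vanishing near zero, your heuristic (``vanishing on $(0,\eps)$ would force analyticity at $0$, contradicting a $w^{\alpha/2-1}$ singularity'') does not work as stated: vanishing of the density on $(0,\eps)$ gives analytic continuation of $G_\alpha^\gamma$ across that interval, not through the endpoint $0$, so there is nothing to contradict. The paper argues instead by contradiction on $(0,\eps)$ itself. A calculus bound shows that $\Im h_\alpha(y)=0$ with $y\in\KK_\alpha$ forces either $y\in[0,\infty)$ or $|\Im y|\ge\delta>0$; combining this with the connectedness of the cluster set of $Y_1$ at $0$ (which meets $[0,\infty)$ by the atom computation) one gets $Y_1(\sqrt t)\in[0,\infty)$ throughout $(0,\eps)$. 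Feeding this into \eqref{defywis} forces $Y_2(\sqrt t)=i^\alpha r(t)$ with $r(t)\to\infty$, and then the entire function $f_{\alpha,\theta}(z)=\frac1{2i}\big(h_\alpha(e^{i\theta}z)-h_\alpha(e^{-i\theta}z)\big)$ for $\theta=\pi\alpha/2$ would vanish on the unbounded set $r((0,\eps))$ while $f'_{\alpha,\theta}(0)\ne 0$, contradicting the identity theorem. This entire-function argument is the missing ingredient; your expansion idea does not substitute for it.
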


\begin{rmk}
Note the contrast between the non-vanishing near zero density 
$\rho_\a^\g$ and the Pastur-Marchenko law $\mu_2^\gamma$
which vanishes 
throughout 
$[0,1-\gamma]$ (c.f. \cite{Pastur}).
\end{rmk}

We also consider diagonal perturbations of heavy tailed
matrices. Namely, the limit of the 
spectral measures $\mun_{\bA_N^\s + \bD_N}$ where 
$\bD_N$ is a diagonal $N \ts N$ matrix, whose entries 
$\{D_N(k,k), 1 \le k \le N \}$ are real valued,  
independent of the random variables $(x_{ij}, 1 \leq i \le j < \infty)$
and identically distributed, of law $\mu^{\bD}$ which has
a finite second moment. In this setting we have the 
following extension of Theorem \ref{theo-limitpoint-uniq-amir}
and Theorem \ref{weakening}. 
\begin{theo}\label{weakeningD}
Let $\wh{\KK}_\alpha := \{R_0 e^{i\varphi}:
-\frac{\alpha\pi}{2} \le \varphi \le 0 ,\, R_0 \geq 0\}$. 
Given $\sigma\in {\mathcal F}_\alpha$, 
the sequence $\E[\mun_{\bA_N^\sigma+\bD_N}]$ converges 
weakly towards the probability measure $\mu^{\sigma,\bD}$ whose 
Cauchy-Stieltjes transform at $z \in \C^+$ is 
\begin{equation}\label{eqGlimD}
G^{\bD}_{\a,\s} (z) 
= \int \frac{1}{z-\lambda} 
d\mu^{\bD}(\lambda) 
\int_0^1 h_\a( (\lambda-z)^{-\frac{\alpha}{2}} 
\wh{X}^{\s}_v (z) ) dv \,,
\end{equation}
for 
some $R=R(\sigma)$ finite and
the unique analytic mapping
$\wh{X}^{\sigma} : \C^+ \mapsto L^\infty((0,1];\wh{\KK}_\alpha)$
such that if $\Im(z) \ge R(\sigma)$ then 
for almost every $x \in (0,1]$ 
\begin{equation}\label{systeqlimD}
\wh{X}^{\sigma}_x(z)= \oo{C}_\a \int_0^1 |\sigma(x,v)|^\alpha
\int (\lambda-z)^{-\frac{\alpha}{2}} 
g_{\alpha} \big( 
(\lambda-z)^{-\frac{\alpha}{2}} \wh{X}^{\sigma}_v(z)
\big) d\mu^{\bD}(\lambda) \, dv \,.
\end{equation}
If $\s \in \PC$ then $\wh{X}^{\sigma}_x (z)$ takes the same 
value $\wh{X}_r(z)$ for all $x \in (b_{r-1},b_r]$, where 
$(\wh{X}_r(z), 1\le r\le q)$ is the unique collection of 
analytic functions from $\C^+$ to $\wh{\KK}_\alpha$ such that 
\begin{equation}\label{systemeqhX}
\wh{X}_r(z) = \oo{C}_\a \sum_{s=1}^q |\sigma_{rs}|^\alpha \D_s
\int (\lambda-z)^{-\frac{\alpha}{2}} 
g_{\alpha} \big(  (\lambda-z)^{-\frac{\a}{2}} \wh{X}_s(z) \big) 
d\mu^{\bD}(\lambda) 
\end{equation}
and $|\wh{X}_r(z)| \le c (\Im(z))^{-\frac{\alpha}{2}}$ 
for some finite $c$ and all $r\in\{1,\ldots,q\}$.   
\end{theo}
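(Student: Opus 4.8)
The plan is to run the resolvent-and-fixed-point analysis of \cite{BAG6} that underlies Theorems \ref{theo-limitpoint-uniq-amir} and \ref{weakening}, now with the diagonal matrix $\bD_N$ threaded through every step. I would first treat $\s\in\PC$ in full and then recover a general $\s\in\FF_\alpha$ by the $L^2_\star$-approximation of Theorem \ref{weakening}: if $\s_p\in\PC$ satisfies \eqref{domaine}, the formula \eqref{eqGlimD} and the solutions of \eqref{systemeqhX}/\eqref{systeqlimD} depend continuously on $\|\,|\s_p|^\alpha-|\s|^\alpha\|$, since the new factor $(\lambda-z)^{-\alpha/2}$ in \eqref{systeqlimD} is bounded in modulus by $(\Im z)^{-\alpha/2}$ uniformly in $\lambda\in\R$ and $\mu^{\bD}$ is a probability measure, so the $\mu^{\bD}$-averages leave every estimate intact. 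Tightness of $\E[\mun_{\bA_N^\s+\bD_N}]$ on $\R$ — which is what makes the limit a genuine probability measure — I would get from the truncation estimates of \cite{BAG6} together with the finite-second-moment hypothesis on $\bD_N$, which enters here (as in the control of the off-diagonal resolvent terms below) through a bound on $N^{-1}\sum_k D_N(k,k)^2$.

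For $\s\in\PC$, condition on $\bD_N$ and work with $\bG_N(z)=(z\bI_N-\bA_N^\s-\bD_N)^{-1}$. The Schur complement formula gives, for $k$ in block $r$,
\[
G_N(z)_{kk}=\big(z-D_N(k,k)-S_k\big)^{-1},\qquad S_k:=a_N^{-2}\!\!\sum_{i,j\ne k}\!\sigma_{r,b(i)}\sigma_{r,b(j)}\,x_{ki}x_{kj}\,G_N^{(k)}(z)_{ij},
\]
with $\bG_N^{(k)}$ the resolvent of the $k$-th minor and $b(i)$ the block of $i$. Using $w^{-1}=i\int_0^\infty e^{-itw}\,dt$ (valid since $\Im w<0$ for $w=1/G_N(z)_{kk}$), the analogous representation for the $G_N^{(k)}(z)_{ii}$, and averaging over the heavy-tailed $x_{ki}$: by \eqref{stabledomain} and the choice \eqref{normalisation} of $a_N$, each factor equals $1-N^{-1}(\cdots)^{\alpha/2}+o(N^{-1})$ with a Gamma-function prefactor, the product exponentiates, and the $i\ne j$ terms of $S_k$ are negligible; this is exactly the computation of \cite{BAG6} producing the kernels $g_\alpha,h_\alpha$ of \eqref{defg}--\eqref{deffal}. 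Since each $G_N^{(k)}(z)_{ii}$ obeys its own Schur relation, one is led to the ansatz that, given $D_N(k,k)=\lambda$ and $k$ in block $r$, $G_N(z)_{kk}$ concentrates in mean near $\tfrac1{z-\lambda}h_\alpha\big((\lambda-z)^{-\alpha/2}\wh X_r(z)\big)$, where the block-$r$ ``self-energy'' $\wh X_r(z)$ is $\lambda$-free — the closed system \eqref{systemeqhX} for $(\wh X_r)$ being the self-consistency of the Laplace exponents of the $S_k$ across the $q$ blocks — and averaging over $k$ (with $N^{-1}\sum_k$ splitting block-wise and $\{D_N(k,k)\}$ converging to $\mu^{\bD}$) yields \eqref{eqGlimD}; the general $\s\in\FF_\alpha$ version \eqref{systeqlimD} then follows by the approximation indicated above. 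The cone $\wh{\KK}_\alpha$ is forced: for $z\in\C^+$ and $\lambda\in\R$ the factor $(\lambda-z)^{-\alpha/2}$ has argument in $(0,\tfrac{\alpha\pi}2)$, multiplication by $\oo{C}_\alpha$ shifts arguments by $-\tfrac{\alpha\pi}2$, and with $\wh X_v\in\wh{\KK}_\alpha$ the arguments of $(\lambda-z)^{-\alpha/2}\wh X_v$ lie in $\KK_\alpha$, where $g_\alpha$ is bounded by Proposition \ref{gammaone}; hence the right-hand side of \eqref{systemeqhX}/\eqref{systeqlimD} returns a value in $\wh{\KK}_\alpha$, and the a priori bound $|\wh X_r(z)|\le c(\Im z)^{-\alpha/2}$ keeps every argument of $g_\alpha,h_\alpha$ near the origin.

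Existence and uniqueness of $\wh X^\s$ for $\Im z\ge R(\s)$ I would obtain from a contraction argument on $L^\infty((0,1];\wh{\KK}_\alpha)$: using the boundedness of $g_\alpha$ on $\KK_\alpha$ and its Lipschitz estimate near the origin, the map in \eqref{systeqlimD} sends the ball $\{\,|\wh X_x|\le c(\Im z)^{-\alpha/2}\,\}$ into itself and is a strict contraction there, its prefactor $\int|(\lambda-z)^{-\alpha/2}|\,d\mu^{\bD}(\lambda)\le(\Im z)^{-\alpha/2}$ being made small by enlarging $\Im z$; the solution is analytic in $z$, being a locally uniform limit of analytic iterates. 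Since $\E[\mun_{\bA_N^\s+\bD_N}]$ is tight and its limit is pinned down on $\{\Im z\ge R(\s)\}$, the analytic-continuation (monodromy) argument from the proof of Theorem \ref{weakening} extends the limiting Cauchy--Stieltjes transform and $\wh X^\s$ analytically to all of $\C^+$ — the continuation controlled by the uniform a priori bounds carried through the matrix approximation — identifies $\mu^{\sigma,\bD}$, and the contraction uniqueness propagates from $\{\Im z\ge R(\s)\}$ to $\C^+$. For $\s\in\PC$, \eqref{systeqlimD} is invariant under permuting $x$ within a block $(b_{r-1},b_r]$, so by uniqueness $\wh X^\s_x(z)=\wh X_r(z)$ on that block and \eqref{systeqlimD} collapses to the finite system \eqref{systemeqhX} (and in this case one expects, as in Theorem \ref{theo-limitpoint-uniq-amir}, a.s. convergence of $\mun_{\bA_N^\s+\bD_N}$, though only $\E[\mun]$-convergence is asserted). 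The main obstacle is the heavy-tailed self-consistency step in the presence of the perturbation: because the sums $S_k$ do not concentrate, $\bG_N^{(k)}$ cannot be replaced by a deterministic kernel, so one must run the full tightness-plus-characterization scheme of \cite{BAG6} while keeping control of the empirical distribution of the pairs $\big(D_N(k,k),\,(z-D_N(k,k))G_N(z)_{kk}\big)$ — it is the site-dependent shift $D_N(k,k)$ inside the Laplace transform that forces the $\lambda$-integral structure of \eqref{eqGlimD} and \eqref{systemeqhX}, and checking that the $\lambda$-dependence enters only through the explicit $(\lambda-z)$ factors while $\wh X_r$ stays genuinely $\lambda$-free is the delicate new ingredient relative to \cite{BAG6}.
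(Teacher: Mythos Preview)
Your proposal is correct and follows essentially the same route as the paper: Schur complement with the extra $D_N(k,k)$ shift, the heavy-tailed averaging of \cite{BAG6} producing the $g_\alpha,h_\alpha$ kernels now evaluated at $(\lambda-z)^{-\alpha/2}\wh X_r$, tightness-plus-characterization of the empirical diagonal-resolvent law giving \eqref{systemeqhX}, a contraction bound on $(\wh{\KK}_\alpha)^q$ for uniqueness when $\Im z$ is large, and then the $L^2_\star$-approximation and Vitali extension of Theorem~\ref{weakening} for general $\s\in\FF_\alpha$. Two minor remarks: in the $\PC$ case the paper obtains \emph{existence} of the $\wh X_r$ not by contraction but as limit points of $X_{N,r}(z)=\E[L_{N,r}^{z,\kappa}(x^{\alpha/2})]$ (this is what ties the fixed point to the actual spectral measure), using contraction only for uniqueness; and what you call a ``monodromy'' step is in the paper simply Vitali's theorem for the locally uniformly bounded sequence $\wh X^{\s_p}$ together with the identity theorem.
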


\begin{rmk}
The 
substitution of $g_2(y)=h_2(y)=1/(1+y)$ in 
\eqref{systeqlimD} and \eqref{systemeqhX} leads to
the prediction 
$G^{\bD}_{2,\s}(z)=\int (\lambda-z-\wh{X}^\s_v(z))^{-1} dv d\mu^{\bD}(\lambda)$
with 
$\wh{X}^\s_x(z)=\int |\s(x,v)|^2 (\lambda-z-\wh{X}^\s_v(z))^{-1} dv 
d\mu^{\bD}(\lambda)$ which in particular for $\sigma(\cdot,\cdot)\equiv 1$
results with $\wh{X}^\s_x(z)=G^{\bD}_{2}(z)$ independent of $x$ that 
corresponds to the celebrated 
free-convolution of $\mu^{\bD}$ and $\mu_2$. 
Namely, 
$G^{\bD}_2(z)=\int (\lambda-z-G^{\bD}_2(z))^{-1} d\mu^{\bD} (\lambda)$.
\end{rmk}

%

%

While beyond the scope of this paper, it is of 
interest to study the behavior of the eigenvectors 
of large random matrices of heavy tailed entries
(such as $\bA_N^\sigma$ or $\bW_{N,M}$), and in 
particular, to find out if they concentrate 
on indices associated with the entries of 
extreme values or are rather ``spread-out''.

After devoting the next section 
to the
truncation and approximation tools
used in our work, we proceed to prove our 
main results, starting with the proof of 
Theorem \ref{theo-limitpoint-uniq-amir} 
in Section \ref{sec:ind}. This is 
followed by the proof of 
Theorem \ref{weakening} in Section \ref{sec:gen},
the specialization to covariance matrices
(i.e. proof of Theorem \ref{wishart-amir}) 
in Section \ref{sec:wis}
and the generalization to diagonal perturbations
(i.e. proof of Theorem \ref{weakeningD}) in
Section \ref{sec:diag}.

\section{Truncation, tightness and approximations}\label{cutoff}

As the second moment of entries of our random matrices is infinite, 
we start by providing 
appropriate truncated matrices, whose 
spectral measures approximate well (in the limit $N \to \infty$)
the spectral measures $\mun_{\bA_N}$. Specifically, let
$\bY_N^B$ denote the $N \ts N$ symmetric matrix with
entries $\sigm x_{ij} 
{\bf 1}_{|x_{ij}| < Ba_N}$ for $B>0$.
We further consider the $N \ts N$ symmetric matrix $\bY_N^\kappa$  
with entries $\sigm x_{ij} 
{\bf 1}_{|x_{ij}| < N^{\kappa}a_N}$ for $\kappa>0$,
and the corresponding normalized matrices,
$$
\bA^B_N:=a_N^{-1} \bY_N^B, \quad \bA^\kappa_N:=a_N^{-1}\bY_N^\kappa.$$
It is easy to adapt the proof 
of \cite[Lemma 2.4]{BAG6} to our setting and deduce that  
for every $\e>0$, there exists  $B(\e)$ finite and $\d(\e,B)>0$ when
$B>B(\e)$,
such that
$$\P(\mbox{rank}(\bY_N-\bY_N^B)\ge \e N)\le e^{-\d(\e,B) N} \,.$$
Likewise,
for $\kappa>0$, and $a\in]1-\alpha \kappa,1[$ there exists a finite
constant $C=C(\alpha,\kappa,a)$ such that 
$$ 
\P(\mbox{rank}(\bY_N-\bY_N^\kappa)\ge N^a)\le e^{-C N^a\log N} 
$$
(and both bounds are independent of $\sigma(\cdot,\cdot)$).
By Lidskii's theorem it then readily follows that 
\begin{equation}\label{trunc1}
\P\left(d_1(\mun_{\bA_N},\mun_{\bA_N^B})\geq 2\e\right)
\le e^{-\d(\e,B) N}\,,
\end{equation}
\begin{equation}\label{trunc2}
\P\left(d_1(\mun_{\bA_N},\mun_{\bA_N^\kappa}) \geq 2N^{a-1}\right) \le
e^{-CN^a\log N }\,,
\end{equation}
where the metric 
$$
d_1(\mu,\nu):=\sup_{ \|f\|_{\rBL} \le 1, f\uparrow}
\Big| \int f d\nu- \int f d\mu\Big|
$$
on the set $\Pa(\R)$ of Borel probability measures on $\R$
is compatible with the topology of weak convergence
(for example, see \cite[Lemma 2.1]{BAG6}), 
and throughout  $\|f\|_\rBL$ denotes the standard Bounded Lipschitz norm
on $\R$. 

Just as in \cite[Lemmas 3.1]{BAG6}, we have the following tightness result.
\begin{lem}
\label{tight}
The sequence $(\E[\mun_{\bA_N}];N\in\N)$ is tight for the topology of 
weak convergence on $\Pa(\R)$. Further, 
for every $B>0$ and $\kappa>0$, the sequences
$(\E[\mun_{\bA^B_N}]; N\in\N)$ and
$(\E[\mun_{\bA_N^\kappa}]; N\in\N)$ are also 
tight in this topology. 
\end{lem}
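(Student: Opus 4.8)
The plan is to establish tightness of $(\E[\mun_{\bA_N}];N\in\N)$ by the standard truncation-and-moment route, and then to deduce tightness of the truncated sequences from it (or, more directly, to prove tightness of the truncated sequences first and then transfer back via the rank bounds). The key point is that tightness on $\Pa(\R)$ amounts to controlling the mass escaping to infinity, i.e. showing that $\sup_N \E[\mun_{\bA_N}(\{|x|\ge K\})] \to 0$ as $K\to\infty$, which can be done through a crude second-moment estimate on a suitable truncated matrix.

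First I would fix $\kappa>0$ (say $\kappa$ small, so that $a\in(1-\alpha\kappa,1)$ leaves room) and observe via \eqref{trunc2} that $\E[d_1(\mun_{\bA_N},\mun_{\bA_N^\kappa})]\le 2N^{a-1}+e^{-CN^a\log N}\to 0$; hence it suffices to prove tightness of $(\E[\mun_{\bA_N^\kappa}];N\in\N)$, and likewise for the $\bA_N^B$ case we use \eqref{trunc1} so that up to an $\varepsilon$-perturbation we may work with $\bA_N^B$ for $B$ large. For a matrix $\bM$ with real eigenvalues one has $\mun_{\bM}(\{|x|\ge K\})\le K^{-2}\frac1N\sum_i\lambda_i^2=K^{-2}\frac1N\tr(\bM^2)$. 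Applying this to $\bM=\bA_N^\kappa$ gives
\begin{equation}\label{eq-plan-trace}
\E[\mun_{\bA_N^\kappa}(\{|x|\ge K\})] \le \frac{1}{K^2}\,\frac{1}{N a_N^2}\sum_{i,j=1}^N \sigma\Big(\tfrac iN,\tfrac jN\Big)^2\,\E\big[x_{ij}^2\,{\bf 1}_{|x_{ij}|<N^\kappa a_N}\big]\,.
\end{equation}
Now $\E[x_{ij}^2{\bf 1}_{|x|<N^\kappa a_N}]$ is controlled using \eqref{stabledomain}: since $\P(|x_{ij}|\ge u)=L(u)u^{-\alpha}$ with $\alpha<2$, a Karamata-type computation gives $\E[x^2{\bf 1}_{|x|<T}]\sim \frac{\alpha}{2-\alpha}T^2L(T)$, so with $T=N^\kappa a_N$ and $a_N=L_0(N)N^{1/\alpha}$ one gets $\E[x^2{\bf 1}_{|x|<N^\kappa a_N}]\le c\,N^{2\kappa}a_N^2\,L_1(N)$ for a slowly varying $L_1$; dividing by $a_N^2$ and recalling that $\s$ is uniformly (over $1/N$-grids) square integrable, the double sum in \eqref{eq-plan-trace} is $O(N^2\cdot N^{2\kappa}L_1(N))/(Na_N^2)\cdot a_N^2$, wait — more carefully, $\frac{1}{Na_N^2}\sum_{i,j}\s(\tfrac iN,\tfrac jN)^2\E[x_{ij}^2{\bf 1}]\le \frac{cN^{2\kappa}L_1(N)}{N}\sum_{i,j}\s(\tfrac iN,\tfrac jN)^2 = cN^{2\kappa}L_1(N)\cdot\frac1N\sum_{i,j}\s(\tfrac iN,\tfrac jN)^2$, and the last average is bounded (essentially $N\|\s\|_\star^2$), giving a bound $\le c'N^{1+2\kappa}L_1(N)$ — which is \emph{not} bounded. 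This is the main obstacle: a naive second moment bound on $\bA_N^\kappa$ diverges, precisely because the $\alpha$-stable scaling makes $a_N^2$ much smaller than the variance of a single surviving entry times $N$.

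The resolution, following \cite[Lemma 3.1]{BAG6}, is to use a finer truncation level inside the second-moment bound: write $\bA_N^\kappa=\bA_N^{B}+(\bA_N^\kappa-\bA_N^{B})$ for a \emph{fixed} large $B$, control the first term by a genuine second-moment estimate — now $T=Ba_N$ and $\E[x^2{\bf 1}_{|x|<Ba_N}]\le c B^{2}a_N^2 L(a_N)\le c'B^{2-\alpha}a_N^2/N$ so that $\frac{1}{Na_N^2}\sum_{i,j}\s^2\E[x^2{\bf 1}_{|x|<Ba_N}]\le c'B^{2-\alpha}\frac1N\sum_{i,j}\s(\tfrac iN,\tfrac jN)^2\le c''B^{2-\alpha}$, uniformly in $N$ — and control the rank of the difference $\bA_N^\kappa-\bA_N^B$ by the exponential bound quoted just before the lemma, so that its contribution to $\mun$ is at most $\varepsilon$ with overwhelming probability. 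Thus for each $\varepsilon>0$, choosing $B=B(\varepsilon)$ and then $K=K(\varepsilon)$ with $c''B^{2-\alpha}/K^2\le\varepsilon$, one gets $\sup_N\E[\mun_{\bA_N^\kappa}(\{|x|\ge 2K\})]\le 2\varepsilon$ for all large $N$, hence (adjusting for finitely many $N$) tightness of $(\E[\mun_{\bA_N^\kappa}];N)$; the same argument applied directly to $\bA_N^B$ gives its tightness, and \eqref{trunc1}--\eqref{trunc2} then transfer tightness to $(\E[\mun_{\bA_N}];N)$ and to the $\bA_N^B$ and $\bA_N^\kappa$ families as asserted. The only slightly delicate points are the Karamata asymptotics for the truncated second moments (handled by Potter's bounds for slowly varying functions) and checking that ``uniformly over $1/N$-lattice grids square integrable'' is exactly what makes $\frac1N\sum_{i,j}\s(\tfrac iN,\tfrac jN)^2$ bounded; both are routine given the hypotheses, and the whole argument is a direct adaptation of \cite[Lemma 3.1]{BAG6} with $\s$ carried along harmlessly since all rank bounds are stated to be independent of $\s(\cdot,\cdot)$.
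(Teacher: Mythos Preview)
Your proposal is correct and, after the self-corrected detour, lands on exactly the paper's argument: bound $\E[\frac{1}{N}\tr((\bA_N^B)^2)]$ via the truncated second-moment estimate $\E[|x_{ij}|^2{\bf 1}_{|x_{ij}|<Ba_N}]\sim \frac{\alpha}{2-\alpha}B^{2-\alpha}a_N^2/N$ together with the $L^2_\star$-control on $\sigma$, giving tightness of $(\E[\mun_{\bA_N^B}];N)$, and then transfer to $\bA_N$ and $\bA_N^\kappa$ via the rank bounds \eqref{trunc1}--\eqref{trunc2}. The paper's proof is just the clean version of your final paragraph, going $\bA_N^B\Rightarrow\bA_N\Rightarrow\bA_N^\kappa$ directly rather than through $\bA_N^\kappa-\bA_N^B$.
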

\begin{proof} Recall that 
\begin{equation}\label{contcov2}
\E[\frac{1}{N}\tr( (\bA_N^B)^2)]=\frac{1}{Na_N^2} \sum_{i,j=1}^N
\sigm^2 
\E[|x_{ij}|^2 
{\bf 1}_{|x_{ij}|<Ba_N}]
\end{equation}
As the latter expectation does not depend on $i,j$ 
and using the key estimate 
\begin{equation}
\label{truncatedmoments2} 
\E[|x_{ij}|^{\zeta}1_{|x_{ij}|<Ba_N}] \sim
\frac{\alpha}{\zeta-\alpha}B^{\zeta-\alpha} a_N^{\zeta} N^{-1} \,,
\end{equation}
for any $\zeta>\alpha$, we deduce that since $\sigma$ is in 
$L^2_\star([0,1]^2)$,
\begin{equation}\label{contcov3}
\lim_{N\ra\infty} \E[\frac{1}{N}\tr( (\bA_N^B)^2)]
\le \frac{\alpha}{2-\alpha} B^{2-\alpha} \|\sigma \|^2_\star < \infty \;\;.
\end{equation}
This implies the tightness of $(\E[\mun_{\bA_N^B}], N\in\N)$ which 
upon using \eqref{trunc1} and \eqref{trunc2} 
provides also the tightness of $(\E[\mun_{\bA_N}], N\in\N)$ and 
$(\E[\mun_{\bA_N^\kappa}], N\in\N)$, respectively (for more  
details, see the proof of \cite[Lemma 3.1]{BAG6}). 
\end{proof}

We next show that it suffices to prove the
convergence of the spectral measures
$\E[\mun_{\bA_N^\sigma}]$ for $\sigma(\cdot,\cdot)$ 
in any given dense subset of $L^2_\star([0,1]^2)$.
\begin{prop}\label{prop-dense}
Suppose that a sequence $(\sigma_p, p \in \N)$
converges in $L^2_\star([0,1]^2)$ towards $\sigma$ and that for all $p\in\N$
\begin{equation}\label{hyp1}
\lim_{N\ra\infty}
\E[\mun_{\bA_N^{\sigma_p}}]=\mu^{\sigma_p} \,.
\end{equation}
Then,
$\mu^{\sigma_p}$ converges weakly as $p \to \infty$ 
towards some Borel probability measure $\mu^\sigma$
and $\E[\mun_{\bA_N^{\sigma}}]$ converges weakly 
towards $\mu^\sigma$ as $N \to \infty$.
\end{prop}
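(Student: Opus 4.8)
The plan is to show two things: first, that $(\mu^{\sigma_p})_{p}$ is a Cauchy sequence for the bounded-Lipschitz metric $d_1$ (hence converges weakly to some limit $\mu^\sigma$), and second, that $\E[\mun_{\bA_N^\sigma}]$ is $d_1$-close to $\mu^{\sigma_p}$ for large $N$ and large $p$, which then identifies its limit as $\mu^\sigma$. Both reductions rest on a single uniform estimate: for $\sigma,\tau \in L^2_\star([0,1]^2)$, one controls $d_1(\E[\mun_{\bA_N^\sigma}],\E[\mun_{\bA_N^\tau}])$ in terms of $\|\sigma-\tau\|_\star$, uniformly in $N$. The natural route to such an estimate is to interpolate through the truncated matrices: using \eqref{trunc1} one replaces $\bA_N^\sigma$ and $\bA_N^\tau$ by their $B$-truncated versions $\bA_N^{\sigma,B}$ and $\bA_N^{\tau,B}$ at the cost of $O(\e)$ in $d_1$ (with $B=B(\e)$), and then compares the two truncated matrices, which now have finite second moments.

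For the truncated comparison I would use the standard resolvent/Hoffman--Wielandt type bound: the difference $\bA_N^{\sigma,B}-\bA_N^{\tau,B}$ is a symmetric matrix with entries $a_N^{-1}(\sigma(\tfrac iN,\tfrac jN)-\tau(\tfrac iN,\tfrac jN)) x_{ij}\mathbf 1_{|x_{ij}|<Ba_N}$, so by the Hoffman--Wielandt inequality the $L^2$-Wasserstein distance between the two spectral measures is at most $N^{-1/2}\|\bA_N^{\sigma,B}-\bA_N^{\tau,B}\|_{HS}$, whose square has expectation $\frac{1}{N a_N^2}\sum_{i,j}(\sigma-\tau)^2(\tfrac iN,\tfrac jN)\E[|x_{ij}|^2\mathbf 1_{|x_{ij}|<Ba_N}]$. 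By the moment estimate \eqref{truncatedmoments2} with $\zeta=2$ this is asymptotically $\frac{\alpha}{2-\alpha}B^{2-\alpha}\cdot\frac1{N^2}\sum_{i,j}(\sigma-\tau)^2(\tfrac iN,\tfrac jN)$, which converges (along the relevant subsequence realizing the $\limsup$) to at most $\frac{\alpha}{2-\alpha}B^{2-\alpha}\|\sigma-\tau\|_\star^2$. Since $d_1$ is dominated by $L^2$-Wasserstein, we obtain
$$
\limsup_{N\to\infty} d_1(\E[\mun_{\bA_N^\sigma}],\E[\mun_{\bA_N^\tau}])
\le 2\e + C_B \|\sigma-\tau\|_\star \,,
$$
with $C_B=(\tfrac{\alpha}{2-\alpha})^{1/2}B^{1-\alpha/2}$ and $B=B(\e)$; one should also invoke Lemma \ref{tight} to ensure all these expected spectral measures live in a fixed compact set where $d_1$ metrizes weak convergence.

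With this estimate in hand the proof closes quickly. Applying it with $\sigma=\sigma_p$, $\tau=\sigma_{p'}$ and using \eqref{hyp1} gives $d_1(\mu^{\sigma_p},\mu^{\sigma_{p'}}) \le 2\e + C_{B(\e)}\|\sigma_p-\sigma_{p'}\|_\star$; since $(\sigma_p)$ is $L^2_\star$-Cauchy and $\e$ is arbitrary, $(\mu^{\sigma_p})$ is $d_1$-Cauchy, hence converges weakly to a probability measure $\mu^\sigma$ (tightness, hence a genuine probability limit, again from Lemma \ref{tight} applied uniformly — the mass is not lost because the second-moment bounds are uniform in $p$ once we fix $B$, or more simply because $d_1$-Cauchy sequences of probability measures have probability-measure limits on $\R$ given tightness). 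Then, for fixed $p$,
$$
\limsup_{N\to\infty} d_1(\E[\mun_{\bA_N^\sigma}], \mu^{\sigma_p})
= \limsup_{N\to\infty} d_1(\E[\mun_{\bA_N^\sigma}], \E[\mun_{\bA_N^{\sigma_p}}])
\le 2\e + C_{B(\e)} \|\sigma - \sigma_p\|_\star\,,
$$
and letting first $p\to\infty$ (so $\|\sigma-\sigma_p\|_\star\to0$ and $\mu^{\sigma_p}\to\mu^\sigma$) and then $\e\to0$ yields $\E[\mun_{\bA_N^\sigma}]\to\mu^\sigma$ weakly.

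The main obstacle is getting the uniform-in-$N$ comparison estimate right in the $\|\cdot\|_\star$ semi-norm rather than the ordinary $L^2$ norm: the quantity $\frac1{N^2}\sum_{i,j}(\sigma-\tau)^2(\tfrac iN,\tfrac jN)$ is exactly a Riemann-type sum whose $\limsup$ over $N$ is, by definition, $\|\sigma-\tau\|_\star^2$, so one must be careful to take the $\limsup$ in $N$ at the end (after the Hoffman--Wielandt step and after the asymptotic moment estimate), and to check that the asymptotic equivalence in \eqref{truncatedmoments2} is uniform enough to survive being summed against the weights $(\sigma-\tau)^2(\tfrac iN,\tfrac jN)$ — this is where one leans on $\sigma,\tau\in L^2_\star([0,1]^2)$ exactly as in the proof of Lemma \ref{tight}. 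Everything else is a routine chaining of the bounded-Lipschitz metric with the $L^2$-Wasserstein metric and the truncation bounds \eqref{trunc1}.
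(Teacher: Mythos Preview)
Your proof is correct and follows essentially the same route as the paper: truncate to $\bA_N^{B,\sigma}$ via \eqref{trunc1}, compare the truncated matrices through a Hoffman--Wielandt/Lidskii bound combined with the moment estimate \eqref{truncatedmoments2} (this is exactly the paper's inequality \eqref{approx0}), and then chain through the triangle inequality to get the Cauchy property for $(\mu^{\sigma_p})$ and the convergence of $\E[\mun_{\bA_N^\sigma}]$. The only cosmetic difference is that the paper is slightly more explicit about the order in which $B\to\infty$ and $p\to\infty$ are taken (choosing $B$ so that $c(\alpha,B)\|\sigma-\sigma_p\|_\star\to 0$), whereas you absorb this into ``$\e$ is arbitrary''; both are fine.
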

\begin{proof}
Note that for some finite constant 
$c=c(\alpha,B)$, independent of $N$ and $\sigma$, 
\begin{equation}
\E[d_1(\mun_{\bA_N^{B,\sigma}}, \mun_{\bA_N^{B,\sigma_p}})]^2
\le \E\Big[\frac{1}{N}\tr\Big(
( \bA_N^{{B,\sigma}}-\bA_N^{B,\sigma_p})^2\Big)\Big]
\le c^2 \|\sigma-\sigma_p\|_\star^2  \,.
\label{approx0}
\end{equation}
Indeed, the leftmost inequality is based on 
Lidskii's theorem (see \cite[(2.16)]{BDJ}), whereas 
the rightmost one is obtained by 
an application of \eqref{contcov2}--\eqref{contcov3}
with $\sigma$ replaced by $\sigma-\sigma_p$. Next, from 
the triangle inequality for the $d_1$-metric, we have that
\begin{eqnarray*}
d_1(\E[\mun_{\bA_N^{\sigma}}],\mu^{\sigma_p})
&\le& d_1(\E[\mun_{\bA_N^{\sigma}}],\E[\mun_{\bA_N^{B,\sigma}}])
+d_1(\E[\mun_{\bA_N^{B,\sigma}}],\E[\mun_{\bA_N^{B,\sigma_p}}])\\
&&+d_1(\E[\mun_{\bA_N^{B,\sigma_p}}],\E[\mun_{\bA_N^{\sigma_p}}])
+d_1(\E[\mun_{\bA_N^{\sigma_p}}], \mu^{\sigma_p})\,.
\end{eqnarray*}
By our hypothesis \eqref{hyp1}, the last term converges to zero
as $N \to \infty$. Further, 
by \eqref{trunc1} and the boundedness and convexity
of $d_1$, we find that for some $\e(B) \to 0$ as 
$B \to \infty$, independently of $\sigma$ and $\sigma_p$,
$$
\limsup_{N\ra\infty} 
d_1(\E[\mun_{\bA_N^{\sigma}}],\E[\mun_{\bA_N^{B,\sigma}}])
+ \limsup_{N \to \infty}
d_1(\E[\mun_{\bA_N^{B,\sigma_p}}],\E[\mun_{\bA_N^{\sigma_p}}])
\le 8 \e(B)\,.
$$
Moreover, by the convexity of $d_1$ and \eqref{approx0}, we have that 
$$
\limsup_{N\ra\infty} d_1(\E[\mun_{\bA_N^{B,\sigma}}],\E[\mun_{\bA_N^{B,\sigma_p}}])
\le c(\alpha,B) \|\sigma-\sigma_p\|_\star \,.
$$
Upon combining these estimates 
we deduce that for any $p \in \N$ and $B>0$, 
\begin{equation}\label{approx3}
\limsup_{N\ra\infty}
d_1(\E[\mun_{\bA_N^{\sigma}}],\mu^{\sigma_p})\le 8 \e(B)+
c(\alpha,B)\|\sigma-\sigma_p\|_\star\,.
\end{equation}
In particular, we get the bound
$$\sup_{p,q\ge r} d_1(\mu^{\sigma_p}, \mu^{\sigma_{q}})
\le 16\e(B) +2c(\alpha,B)\d(r)^2 \,,
$$
where by hypothesis $\d(r)^2:=\sup_{p\ge r} \|\sigma-\sigma_p\|_\star$
converges to zero as $r \to \infty$.
Taking $r$ and $B$ going to infinity such that $c(\alpha,B)\le \d(r)^{-1}$
we conclude that $(\mu^{\sigma_p},p\in\N)$
is $d_1$-Cauchy and hence converges to some $\mu^\sigma \in \Pa(\R)$
(recall that $\e(B)$ and $c(\alpha,B)$ are independent 
of $\sigma$). 
By this convergence, combining \eqref{approx3}
and the triangle inequality for the $d_1$-metric, 
we deduce upon taking $p \to \infty$ and then $B \to \infty$, 
that $\E[\mun_{\bA_N^{\sigma}}]$ also
converges towards $\mu^\sigma$ as $N \to \infty$.
\end{proof}

\begin{rmk}\label{remarkcenter}
By our assumptions, when dealing with $\sigma \in \FF_\alpha$
we may and shall take in Proposition \ref{prop-dense}
some $\sigma_p\in \PC$. Since 
the rank of the matrix  $\E[ \bA_N^{\kappa, \sigma_p}]$
is then uniformly bounded in $N$, 
as in \cite[Remark 2.5]{BAG6} we  may  and shall recenter
$\bA_N^{\kappa, \sigma_p}$ 
without changing its limiting spectral distribution.
\end{rmk}

We conclude by showing an interpolation property
of $\mun_{\bA^\s_N}$ in case $\s \in \PC$. That is, 
the weak convergence of $\mun_{\bA^\s_N}$ follows
once we have it along a suitable sub-sequence $\phi(n)$.
\begin{lem}\label{lem:interp} 
Suppose $\s \in \PC$ and the increasing function
$\phi:\N \to \N$ is such that $\phi(n-1)/\phi(n) \to 1$. 
If $\mun_{\bA^\s_{\phi(n)}}$ converges weakly to some 
probability measure $\mu^\s$ then so does $\mun_{\bA^\s_N}$.
\end{lem}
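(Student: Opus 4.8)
The plan is to remove the randomness from the statement, reducing it to the analogous convergence of the \emph{expected} spectral measures, and then to realize each $\bA^\s_N$, up to a $d_1$-negligible error, as a rescaled principal corner of $\bA^\s_{\phi(n)}$.

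First I would dispose of the randomness. Revealing one row (and the symmetric column) of $\bA^\s_N$ is a Hermitian perturbation of rank at most $2$, which by Cauchy interlacing moves the eigenvalue counting function by at most $2$, hence changes $\int f\,d\mun_{\bA^\s_N}$ by at most $4/N$ for every monotone $f$ with $\|f\|_\rBL\le 1$. Applying the Azuma--Hoeffding inequality to the martingale that reveals the rows one at a time gives $\P(|\int f\,d\mun_{\bA^\s_N}-\E\int f\,d\mun_{\bA^\s_N}|>t)\le 2e^{-cNt^2}$ for a universal $c>0$; summing over $N$ and taking $f$ in a countable family that determines $d_1$ yields $d_1(\mun_{\bA^\s_N},\E[\mun_{\bA^\s_N}])\to 0$ almost surely, along the full sequence $N$. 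Since $d_1$ is compatible with weak convergence and the hypothesis gives $\E[\mun_{\bA^\s_{\phi(n)}}]\to\mu^\s$ weakly (by dominated convergence), it remains only to show $\E[\mun_{\bA^\s_N}]\to\mu^\s$ weakly.

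For this, fix $n$ and $\phi(n-1)\le N\le M:=\phi(n)$, and let $\bB$ be the top-left $N\ts N$ corner of $\bA^\s_M$, which is built from the same variables $\{x_{ij}:i\le j\le N\}$ as $\bA^\s_N$; its $(i,j)$ entry is $\sigma(i/M,j/M)x_{ij}/a_M$, to be compared with the entry $\sigma(i/N,j/N)x_{ij}/a_N$ of $\bA^\s_N$. Let $\nu_{N,M}$ be the spectral measure of $(a_M/a_N)\bB$ (the dilation of $\mun_\bB$ by $a_M/a_N$). Because $\s\in\PC$, the matrices $(a_M/a_N)\bB$ and $\bA^\s_N$ agree except on rows/columns indexed by $\{k\le N:\ k/M\text{ and }k/N\text{ lie in different pieces of the partition}\}$, a set of size $O(q(M-N))$ since $0<k/N-k/M\le (M-N)/M$; hence they differ by a perturbation of rank $O(q(M-N))$, which gives $d_1(\mun_{\bA^\s_N},\nu_{N,M})=O(q(M-N)/N)$, pointwise and so in expectation. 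Cauchy interlacing between $\bB$ and $\bA^\s_M$ gives $d_1(\mun_\bB,\mun_{\bA^\s_M})=O((M-N)/N)$. Moreover $a_M/a_N\to 1$ uniformly for $\phi(n-1)\le N\le\phi(n)$, by the uniform convergence theorem for the regularly varying sequence $a_N=L_0(N)N^{1/\alpha}$ (using $M/N\le\phi(n)/\phi(n-1)\to 1$); together with the tightness of $(\E[\mun_{\bA^\s_M}])_M$ from Lemma \ref{tight} and the elementary bound that $d_1$ between a probability measure $\mu$ and its dilation by $c$ is at most $\int\min(2,|c-1|\,|x|)\,d\mu(x)$, this forces $\sup_{\phi(n-1)\le N\le\phi(n)}\E[d_1(\nu_{N,M},\mun_\bB)]\to 0$. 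Finally $(M-N)/N\le\phi(n)/\phi(n-1)-1\to 0$, so the triangle inequality
\[
d_1(\E[\mun_{\bA^\s_N}],\mu^\s)\le \E[d_1(\mun_{\bA^\s_N},\nu_{N,M})]+\E[d_1(\nu_{N,M},\mun_\bB)]+\E[d_1(\mun_\bB,\mun_{\bA^\s_M})]+d_1(\E[\mun_{\bA^\s_M}],\mu^\s)
\]
(with $M=\phi(n)$) shows the left side tends to $0$ uniformly over $N\in[\phi(n-1),\phi(n)]$; with the first step this yields the asserted almost sure weak convergence of $\mun_{\bA^\s_N}$.

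The step requiring the most care is the corner embedding: one must verify that the mismatch between the size-$N$ and size-$M$ discretizations of the piecewise-constant $\s$ touches only $O(q(M-N))$ coordinates, uniformly in $N\in[\phi(n-1),\phi(n)]$, so that it amounts to a rank $O(q(M-N))$ --- hence $d_1$-negligible --- perturbation after the scalar rescaling. The remaining ingredients (Azuma's inequality, Cauchy interlacing for principal submatrices, uniform convergence for regularly varying sequences, and the tightness from Lemma \ref{tight}) are routine.
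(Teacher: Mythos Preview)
Your argument is correct, and the core embedding idea --- comparing $\bA^\s_N$ with a rescaled piece of $\bA^\s_{M}$ for $M=\phi(n)$, using that $\s\in\PC$ forces the discretizations at scales $N$ and $M$ to disagree on only $O(q(M-N))$ rows --- is exactly the mechanism the paper uses. The paper's version is dual to yours: instead of taking the $N\times N$ corner $\bB$ of $\bA^\s_M$, it pads $(a_N/a_M)\bA^\s_N$ with zeros to an $M\times M$ matrix $\wh{\bA}^\s_N$ and observes that $\mbox{rank}(\wh{\bA}^\s_N-\bA^\s_M)\le 2(q+1)(M-N+1)$, which via Lidskii gives the same $d_1$-control.

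Where the two proofs genuinely differ is in the surrounding machinery. The paper's argument is entirely \emph{pathwise}: it never leaves a fixed realization $\omega$, and so proves the stronger statement that on any $\omega$ for which the subsequence converges, the full sequence does too. Your detour through Azuma--Hoeffding and expected measures is correct but unnecessary; in particular it forces you to assume the hypothesis holds almost surely rather than on a given sample path. Likewise, to control the dilation by $a_M/a_N$ you invoke the tightness of $(\E[\mun_{\bA^\s_M}])_M$ from Lemma~\ref{tight}, whereas the paper handles this directly and uniformly over \emph{all} probability measures $\nu$: for monotone $f$ with $\|f\|_\rBL\le 1$, boundedness of $f$ gives a $K$ with $|f(x)-f(y)|\le\epsilon$ once $x,y$ are both beyond $\pm K$, so
\[
\sup_{\nu\in\Pa(\R)}\int |f(x)-f(\beta x)|\,d\nu(x)\le \epsilon + \frac{K}{\beta}(1-\beta)\|f\|_{\LL}\,,
\]
and $\beta=\beta_N\to 1$ kills the second term. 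This sidesteps both the concentration step and the appeal to Lemma~\ref{tight}, yielding a shorter and self-contained proof.
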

\begin{proof} For any $N \in (\phi(n-1),\phi(n)]$ set
$M=\phi(n)$ and let $\wh{\bA}^\s_N$ denote the $M \times M$
dimensional matrix whose upper left $N \times N$ corner equals
$(a_N/a_M) \bA^\s_N$ and 
having zero entries everywhere else. 
Letting $0=b_0<b_1<\cdots<b_{q}=1$ denote the partition 
that corresponds to $\s \in \PC$, observe that 
$\wh{A}^\s_N(i,j)=A^\s_M(i,j)$ unless either 
$i \in (b_r N,b_r M]$ or $j \in (b_r N,b_r M]$ for some 
$r=0,1,\ldots,q$. As the latter applies for at most $(q+1)(M-N+1)$ 
values of $1 \le i \le M$ and at most $(q+1)(M-N+1)$ values
of $1 \le j \le M$, it follows that
$$
\mbox{rank}(\wh{\bA}^\s_N - \bA^\s_M) \le 2(q+1)(M-N+1) \,,
$$ 
so by Lidskii's theorem 
$$
d_1(\mun_{\wh{\bA}^\s_N},\mun_{\bA^\s_M}) \le 4(q+1)(1-\frac{N-1}{M})
\le 4(q+1)(1-\frac{\phi(n-1)}{\phi(n)}) \,,
$$
which converges to zero as $N \to \infty$ (hence $n \to \infty$).
Therefore, by the triangle inequality for the $d_1$-metric,
our assumption that  
$d_1(\mun_{\bA^\s_{\phi(n)}},\mu^\s) \to 0$ implies that 
$d_1(\mun_{\wh{\bA}^\s_N},\mu^\s) \to 0$ as $N \to \infty$. 
Next note that the eigenvalues of $\wh{\bA}^\s_N$ are 
those of $(a_N/a_M) \bA^\s_N$ augmented by $M-N$ 
zero eigenvalues. Fixing a monotone non-decreasing
bounded Lipschitz function $f(\cdot)$, 
we have thus seen that 
\begin{equation}\label{eq:inter-rel}
\int f d\mun_{\wh{\bA}^\s_N} = (1-\frac{N}{M}) f(0) +
\frac{N}{M} \int f(\beta_N x) d \mun_{\bA^\s_N} (x) \to 
\int f d\mu^\s \,, 
\end{equation}
when $N \to \infty$, 
where $1 \ge \beta_N := a_N/a_M \ge a_{\phi(n-1)}/a_{\phi(n)}$
(as both $\phi(\cdot)$ and $a_k$ are non-decreasing, 
see \eqref{normalisation}). Since $\phi(n-1)/\phi(n) \to 1$ 
the same applies for $N/M \in (\phi(n-1)/\phi(n),1]$. Further,  
$a_k=L_0(k) k^{1/\alpha}$ with 
$L_0(\cdot)$ a slowly varying function, hence also
$a_{\phi(n-1)}/a_{\phi(n)} \to 1$ when $n \to \infty$
and consequently $\beta_N \to 1$ as $N \to \infty$.
Fixing $\epsilon>0$, since $f(\cdot)$ is monotone and 
bounded, there exists $K=K(\epsilon)$
finite such that $|f(x)-f(y)| \le \epsilon$ whenever
$\min(x,y) \ge K$ or $\max(x,y) \le -K$.
Thus, for any $\beta \in (0,1]$, 
$$
\sup_{\nu \in \Pa(\R)} \int |f(x) - f(\beta x)| d \nu (x) 
\le \epsilon + \frac{K}{\beta} (1-\beta)\|f\|_{\LL} \,.
$$ 
In particular, since $\beta_N \to 1$, for any $\epsilon >0$, 
$$
\lim_{N \to \infty} | \int f d \mun_{\bA^\s_N} -
\int f(\beta_N x) d \mun_{\bA^\s_N} (x)| \le \epsilon \,,
$$
which in view of \eqref{eq:inter-rel} 
results with $\int f d \mun_{\bA^\s_N} \to \int f d\mu^\s$. 
This holds for each monotone non-decreasing
bounded Lipschitz function $f(\cdot)$, which is equivalent to 
our thesis that $\mun_{\bA^\s_N}$ converges weakly to $\mu^\s$. 
\end{proof}


\section{Induction and the limiting equations}\label{sec:ind}
We consider throughout this section $\sigma\in\PC$.
That is, there exist $0=b_0<b_1<\cdots<b_{q}=1$
and a $q \ts q$ symmetric matrix of entries 
$\sigma_{rs}$ for $1 \le r,s\le q$ such that 
\begin{equation}\label{eq:sPC}
\sigma(x,y)=\sigma_{rs}\qquad \mbox{\rm for all} \quad
(x,y) \in (b_{r-1},b_r] \ts (b_{s-1},b_s] \; .
\end{equation}
Associated with such $\sigma$ are the 
random matrix $\bA_N^\sigma$ and the 
$N\ts N$ piecewise constant matrix $\bsigma^N$ 
of entries $\sigma^N(i,j)= \sigma_{rs}$ for 
$[Nb_{r-1}] <  i \leq [Nb_{r}]$ and 
$[Nb_{s-1}] <  j \leq [Nb_{s}]$.

\subsection{Characterization of limit points}
For each $z\in\C^+=\{z\in\C: \Im(z)>0\}$ 
we define, as in \cite[Section 4]{BAG6},
the matrices
$\bG_N(z):=(z\bI_N-\bA_N)^{-1}$ and the probability 
measure $L_N^z$ on $\C$ such that for 
$f\in\Ca_b(\C)$, 
\begin{equation}\label{eq:lnz}
L_N^z(f)= \E\Big[
\frac{1}{N} \sum_{k=1}^N f\left(
G_N(z)_{kk}\right)\Big].
\end{equation}
It is useful for our purpose to represent $L_N^z$
as a weighted sum $L_N^z=\sum_{r=1}^q \Delta_{N,r} L_{N,r}^z$ where 
$L_{N,r}^z$ are the probability measures on $\C$ given by 
\begin{equation}\label{eq:lnzr}
L_{N,r}^z(f):= \E\Big[
\frac{1}{[N b_r]-[N b_{r-1}] } \sum_{k= [Nb_{r-1}]+1}^{[N b_r]} f\left(
G_N(z)_{kk}\right)\Big],
\end{equation}
and 
$\Delta_{N,r}:=N^{-1} ([N b_r]-[N b_{r-1}])\ra \D_r$
as $N \to \infty$. 
Since each term $G_N(z)_{kk}$ belongs to the compact set 
$\Db (z):=\{x\in \C^- : |x|\le |\Im(z)|^{-1}\}$,
the probability measures $L_{N,r}^z$ are supported on $\Db (z)$
for all $N \in \N$ and $1 \le r \le q$.

We denote by $\bG_N^\kappa(z)$ and $L_{N,r}^{z,\kappa}$ 
the corresponding objects 
when $\bA_N$ is replaced by the truncated matrix $\bA_N^\kappa$.
Similarly to \cite[Lemma 4.4]{BAG6} we next show that
\begin{lem}
\label{approxss} 
For $0< \kappa < \frac{1}{2(2-\alpha)}$,
any $1\le r\le q$ and Lipschitz function $f$ on $\Db (z)$,
$$\lim_{N\ra\infty}
\Big|
\E\big[L_{N,r}^{z,\kappa}(f)\big]-\E\Big[f\big( (z- \sum_{k=1}^N
\widetilde A_N^\kappa([N b_r],k)^2 G_N^\kappa(z)_{kk})^{-1}\big)\Big]
\Big|=0 \,,
$$
where $\widetilde \bA_N^\kappa$ is an independent copy
of $ \bA_N^\kappa$. 
\end{lem}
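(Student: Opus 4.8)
The plan is to follow the proof of \cite[Lemma~4.4]{BAG6}, the backbone being the Schur complement formula for the diagonal resolvent entries. Write $c_\sigma:=\max_{r,s}|\sigma_{rs}|$ and $m_N:=\E[x_{11}\,\one_{|x_{11}|<N^\kappa a_N}]$. Since the $x_{ij}$ are i.i.d.\ and $\sigma^N(\cdot,\cdot)$ depends only on the block indices of its arguments, any permutation of $\{1,\dots,N\}$ preserving each block leaves the law of $\bA_N^\kappa$ unchanged, so $G_N^\kappa(z)_{mm}$ has the same law for every $m$ in the $r$-th block; with $m:=[Nb_r]$ we therefore have $\E[L_{N,r}^{z,\kappa}(f)]=\E[f(G_N^\kappa(z)_{mm})]$. (One may also recenter the entries, as permitted by Remark~\ref{remarkcenter}, but the block structure of $\bsigma^N$ will render this unnecessary.) By the Schur complement,
$$
G_N^\kappa(z)_{mm}=\big(z-A_N^\kappa(m,m)-Q_m-S_m\big)^{-1},
$$
where $S_m:=\sum_{k\ne m}A_N^\kappa(m,k)^2\,G_N^{\kappa,(m)}(z)_{kk}$, $Q_m:=\sum_{k\ne l;\,k,l\ne m}A_N^\kappa(m,k)\,G_N^{\kappa,(m)}(z)_{kl}\,A_N^\kappa(m,l)$, and $G_N^{\kappa,(m)}(z)$ is the resolvent at $z$ of $\bA_N^\kappa$ with row and column $m$ deleted; in particular $G_N^{\kappa,(m)}(z)$ is independent of the family $\{A_N^\kappa(m,k)\}_k$.

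First I would discard $A_N^\kappa(m,m)$ and $Q_m$. By \eqref{truncatedmoments2} with $\zeta=2$ and $B=N^\kappa$, $\E[A_N^\kappa(m,m)^2]\le c_\sigma^2 a_N^{-2}\E[|x_{11}|^2\one_{|x_{11}|<N^\kappa a_N}]=O(N^{\kappa(2-\alpha)-1})$, and likewise $\max_k\E[A_N^\kappa(m,k)^2]=O(N^{\kappa(2-\alpha)-1})$. Conditioning on $G_N^{\kappa,(m)}(z)$ and writing $A_N^\kappa(m,k)=\E[A_N^\kappa(m,k)]+\xi_k$ with $\{\xi_k\}$ independent and centered, the conditional variance of $Q_m$ is, using $\|G_N^{\kappa,(m)}(z)\|_{\mathrm{HS}}^2\le N(\Im z)^{-2}$, at most $O(N^{2\kappa(2-\alpha)-1})+o(1)$ deterministically — this is exactly where the hypothesis $\kappa<\tfrac1{2(2-\alpha)}$ enters, forcing $2\kappa(2-\alpha)<1$ — while, since $\bsigma^N$ is block constant, $\E[Q_m\mid G_N^{\kappa,(m)}(z)]=\sum_{k\ne l}\E[A_N^\kappa(m,k)]\E[A_N^\kappa(m,l)]\,G_N^{\kappa,(m)}(z)_{kl}$ is a quadratic form of $G_N^{\kappa,(m)}(z)$ at the vector $(\E[A_N^\kappa(m,k)])_{k\ne m}$ (of squared norm $O(a_N^{-2}m_N^2\,N)$), minus its diagonal, hence of order $a_N^{-2}m_N^2\,N\,(\Im z)^{-1}=o(1)$. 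Thus $A_N^\kappa(m,m),Q_m\to0$ in $L^2$. Since $G_N^\kappa(z)_{mm}$ and $(z-S_m)^{-1}$ both lie in $\Db(z)$ (for the latter because $\Im S_m\le0$), and $|(z-w_1)^{-1}-(z-w_2)^{-1}|\le(\Im z)^{-2}|w_1-w_2|$ there, we obtain $|\E[f(G_N^\kappa(z)_{mm})]-\E[f((z-S_m)^{-1})]|\le(\Im z)^{-2}\|f\|_{\mathrm{Lip}}\,(\E|A_N^\kappa(m,m)|+\E|Q_m|)\to0$.

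It remains to replace $S_m$ by $\widetilde S_m:=\sum_{k=1}^N\widetilde A_N^\kappa(m,k)^2\,G_N^\kappa(z)_{kk}$, that is, to swap the minor resolvent for the full one and the row-$m$ entries for those of $\widetilde\bA_N^\kappa$; this is the delicate point, since the weights $A_N^\kappa(m,k)^2$ are heavy tailed of index $\alpha/2<1$ and do not concentrate, so no term-by-term bound is available. First, $\{A_N^\kappa(m,k)\}_{k\ne m}$ is independent of $G_N^{\kappa,(m)}(z)$ and has the same (within-block i.i.d.) law as $\{\widetilde A_N^\kappa(m,k)\}_{k\ne m}$, which is also independent of $G_N^{\kappa,(m)}(z)$; hence the pairs $\big(\{A_N^\kappa(m,k)^2\}_{k\ne m},\,G_N^{\kappa,(m)}(z)\big)$ and $\big(\{\widetilde A_N^\kappa(m,k)^2\}_{k\ne m},\,G_N^{\kappa,(m)}(z)\big)$ have the same law, so $\E[f((z-S_m)^{-1})]=\E[f((z-\widehat S_m)^{-1})]$ with $\widehat S_m:=\sum_{k\ne m}\widetilde A_N^\kappa(m,k)^2\,G_N^{\kappa,(m)}(z)_{kk}$. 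Second, the rank-one perturbation identity $G_N^\kappa(z)_{kk}-G_N^{\kappa,(m)}(z)_{kk}=G_N^\kappa(z)_{km}^2/G_N^\kappa(z)_{mm}$ gives
$$
\widetilde S_m-\widehat S_m=\widetilde A_N^\kappa(m,m)^2\,G_N^\kappa(z)_{mm}+\frac1{G_N^\kappa(z)_{mm}}\sum_{k\ne m}\widetilde A_N^\kappa(m,k)^2\,G_N^\kappa(z)_{km}^2 .
$$
The first term is $O(N^{\kappa(2-\alpha)-1})$ in $L^1$. For the second, the weights are now independent of the whole matrix $\bA_N^\kappa$; truncate them at a fixed level by $w_k:=\widetilde A_N^\kappa(m,k)^2\,\one_{\widetilde A_N^\kappa(m,k)^2\le c_\sigma^2 L^2}$, which differs from $\widetilde A_N^\kappa(m,k)^2$ for some $k$ only on an event of probability at most $N\,\P(|x_{11}|>La_N)=O(L^{-\alpha})$ by \eqref{stabledomain}. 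Conditioning on $\bA_N^\kappa$ and using the identity $\sum_k|G_N^\kappa(z)_{km}|^2=-\Im G_N^\kappa(z)_{mm}/\Im z$, the conditional mean of $G_N^\kappa(z)_{mm}^{-1}\sum_{k\ne m}w_k\,G_N^\kappa(z)_{km}^2$ is at most $(\max_k\E[w_k])(\Im z)^{-1}=O(N^{\kappa(2-\alpha)-1})$ and its conditional variance at most $(\max_k\E[w_k^2])(\Im z)^{-2}\le c_\sigma^2L^2(\max_k\E[w_k])(\Im z)^{-2}=O(L^2N^{\kappa(2-\alpha)-1})$, both tending to $0$ for each fixed $L$; letting $L\to\infty$ then shows $\widetilde S_m-\widehat S_m\to0$ in probability. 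As all the arguments lie in $\Db(z)$, where $w\mapsto f((z-w)^{-1})$ is bounded and Lipschitz, bounded convergence gives $|\E[f((z-\widehat S_m)^{-1})]-\E[f((z-\widetilde S_m)^{-1})]|\to0$, and chaining the three comparisons yields the claim, with $\widetilde S_m=\sum_{k=1}^N\widetilde A_N^\kappa([Nb_r],k)^2\,G_N^\kappa(z)_{kk}$. The main obstacle is this last step: a naive estimate fails because the weights do not concentrate and deleting one row/column moves individual resolvent entries by macroscopic amounts, so it is only after the decoupling — replacing the row-$m$ weights by those of $\widetilde\bA_N^\kappa$, which makes them independent of the resolvent — that a conditional first/second-moment argument, together with a crude $O(1)$ truncation of the weights licensed by $\max_k|x_{mk}|$ being of order $a_N$, can close the argument.
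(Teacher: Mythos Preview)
Your argument is correct, and the overall architecture---Schur complement, discard the diagonal entry and the off-diagonal quadratic form via second moments, then decouple the weights---matches the paper's. The route, however, differs at one point. You apply Schur at the index $m=[Nb_r]$ inside the $N\times N$ matrix, so the minor resolvent $G_N^{\kappa,(m)}$ appears; you then decouple by replacing the row-$m$ weights with those of the independent copy (same law, both independent of the minor), and finally upgrade from $G_N^{\kappa,(m)}$ to $G_N^\kappa$ using the rank-one identity $G_{kk}-G^{(m)}_{kk}=G_{km}^2/G_{mm}$ together with the Ward identity $\sum_k|G_{km}|^2=-\Im G_{mm}/\Im z$ and an $O(1)$ truncation of the weights. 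The paper instead \emph{augments} to an $(N{+}1)\times(N{+}1)$ matrix by adjoining an independent row/column indexed $0$ with the same block type as $r$; Schur at $(0,0)$ then involves directly the full resolvent $G_N^\kappa$ (the minor of the augmented matrix is $\bA_N^\kappa$ itself), so the decoupling comes for free. The remaining work is to show that the augmented empirical measure $\bar L_{N+1,r}^{z,\kappa}$ is close to $L_{N,r}^{z,\kappa}$, which the paper does by adapting \cite[Lemma~4.1]{BAG6}. In short: the paper buries the rank-one perturbation in a measure-level comparison borrowed from \cite{BAG6}, whereas you handle it explicitly and self-containedly via the Ward identity; both work, and your version avoids invoking the auxiliary lemma at the cost of the extra truncation step. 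A minor cosmetic remark: the paper recenters $\bA_N^\kappa$ (Remark~\ref{remarkcenter}) so that the mean terms $\mu_k$ simply vanish, which would shorten your treatment of $\E[Q_m\mid G^{(m)}]$, but your direct estimate $a_N^{-2}m_N^2N=o(1)$ is correct in all regimes of~$\alpha$.
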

\begin{proof}
Without loss of generality, it suffices to  prove the 
lemma for $r=1$ (the general case follows by permuting indices).
To this end, let 
$\bar \bA_{N+1}^\kappa$ 
denote 
an $(N+1)\ts (N+1)$ symmetric matrix obtained by
adding to $\bA_N^\kappa$ a first row and column 
$\wt{A}_N^\kappa(0,k)=\wt{A}_N^\kappa(k,0)$
such that $(\wt{A}_N^\kappa(0,k),k\ge 1)$ is an independent 
copy of $(A_N^\kappa(1,k),k\ge 1)$ and $\wt{A}_N^\kappa(0,0)= \sigma^N(1,1) 
a_N^{-1}  x_{00} 
{\bf 1}_{|x_{00}| < N^\kappa a_N}$.
Next, consider the matrix  
$\bar \bG^\kappa_{N+1}(z) = (z\bI_{N+1}-\bar \bA_{N+1}^\kappa)^{-1}$ and let 
$\bar L_{N+1,1}^{z,\kappa}$ denote the empirical measure of 
$\{ \bar G^\kappa_{N+1}(z)_{kk}$,
$0\le k \leq  [N b_1]\}$. The invariance of the law
of $\bar \bA_{N+1}^\kappa$ with respect to
symmetric permutations of its first $[N b_1]+1$ rows 
and columns implies that 
$\{\bar G^\kappa_{N+1}(z)_{kk}$, $0\le k \leq  [N b_1]\}$ 
are identically distributed, hence for any $f\in \Ca_b(\Db (z))$, 
\begin{equation}\label{eq:lbrn00}
\E[ \bar L_{N+1,1}^{z,\kappa}(f)]= \E[f(\bar G^\kappa_{N+1}(z)_{00})]\,.
\end{equation}
As in \cite{BAG6}, the key to our proof is Schur's complement formula
$$
\bar G^\kappa_{N+1} (z)_{00}
=\big(z-\wt{A}_N^\kappa({0,0}) -\sum_{k,l=1}^N 
\wt{A}_N^\kappa(0,k) \wt{A}_N^\kappa(l,0)
G_N^\kappa(z)_{kl}\big)^{-1}\,,
$$
from which we thus get that 
\begin{equation}\label{equalitylemma1}
\E[ \bar L_{N+1,1}^{z,\kappa}(f)] 
=\E
\Big[f(\big(z- \wt{A}_N^\kappa({0,0})-\sum_{k,l=1}^N \wt{A}_N^\kappa(0,k) 
\wt{A}_N^\kappa(l,0)
G_N^\kappa (z)_{kl}\big)^{-1})\Big]. 
\end{equation}
Recall that the entries of $\wt{\bA}_N^\kappa$ are centered 
(see Remark \ref{remarkcenter}),
and independent of the matrix $\bG_N^\kappa(z)$.
Further, as the entries of 
the matrix $\bsigma^N$ are uniformly bounded, the statement and
proof of \cite[Lemma 4.3]{BAG6} extends readily to our setting,
showing that the off diagonal terms in the right hand
side of \eqref{equalitylemma1} are small with overwhelming
probability (this is simply based on a computation
of the variance of this term, which is possible thanks to 
the cut-off $\kappa$). As shown in 
the proof of \cite[Lemma 4.4]{BAG6},
this allows us
to neglect the terms $\wt{A}_N^\kappa(0,0)$ and  
$\sum_{k\neq l} \wt{A}_N^\kappa(0,k) \wt{A}_N^\kappa(l,0) G_N^\kappa(z)_{kl}$ 
in \eqref{equalitylemma1}, resulting with 
\begin{equation}
\label{approx2}
\lim_{N\ra\infty}
\Big|\E[\bar L_{N+1,1}^{z,\kappa}(f)]- \E\Big[f\big( (z- \sum_{k=1}^N
\wt{A}_N^\kappa(0,k)^2 G_N^\kappa(z)_{kk})^{-1}\big)\Big]\Big|=0 \,.
\end{equation}
Further, with 
$\bsigma^N$ uniformly bounded, adapting the proof of 
\cite[Lemma 4.1]{BAG6} to our setting, we deduce that 
$$
\lim_{N\ra\infty}\P
( d_1(L_{N,1}^{z,\kappa},\bar L_{N+1,1}^{z,\kappa})>N^{-\eta})=0\,,
$$
for any $0<\eta< \frac{1}{2}(1-\kappa(2-\alpha))$. 
Consequently, $|\E[\bar L_{N+1,1}^{z,\kappa}(f)]-
\E[ L_{N,1}^{z,	\kappa}(f)]| \to 0$ as $N \to \infty$ 
and \eqref{approx2} finishes the  proof of the lemma.
\end{proof}

Identifying $\C$ with $\R^2$, recall 
\cite[Definition 5.1]{BAG6}. Namely,
\begin{defi}
Given $\alpha \in (0,2)$ and a compactly supported 
probability measure $\mu $ on $\C$, let $P^\mu$
denote the probability measure on $\C$ whose 
characteristic function at $\bt \in \R^2$ is  
$$
\int_{\R^2} e^{i\langle \bt,\bx\rangle} dP^{\mu}(\bx)
= \exp [-v_{\mu,\frac{\alpha}{2}}(\bt)^{\frac{\alpha}{2}}
(1-i\beta_{\mu,\frac{\alpha}{2}}(\bt)
\tan(\frac{\pi \alpha}{4}))] \,,
$$
where
\begin{eqnarray*}
v_{\mu,\alpha}(\bt)&=&[v_{\alpha}^{-1}
\int|\langle \bt,\bz\rangle|^{\alpha}d\mu(\bz)]^{1/\alpha} \,,\\
v_{\alpha}^{-1}&=& \int_0^{\infty} \frac{\sin x}{x^{\alpha}}dx=
\frac{\Gamma(2-\alpha)\cos(\frac{\pi\alpha}{2})}{1-\alpha} \,,\\
\beta_{\mu,\alpha}(\bt)&=&\frac{\int|\langle \bt, \bz \rangle|^{\alpha}
{\rm sign}(\langle \bt, \bz \rangle) d\mu(\bz)}{
\int|\langle \bt,\bz\rangle|^{\alpha}d\mu(\bz)} \,,
\end{eqnarray*}
and $\beta_{\mu,\alpha}(\bt)=0$ whenever $v_{\mu,\alpha}(\bt)=0$.
In particular, if $\mu$ is supported in 
the closure of $\C^-$, then so does $P^\mu$. 
\end{defi}

Equipped with this definition, our next proposition 
characterizes the set of possible limit points 
of $\{\E[L_{N,r}^{z,\kappa}], 1\le r\le q\}$. 
\begin{prop}\label{prop-limitpoint}
For $0< \kappa < \frac{1}{2(2-\alpha)}$ and $z\in\C^+$,
any limit point $(\mu^z_r, 1\le r\le q)$ 
of the sequence $\{
(\E[L_{N,r}^{z,\kappa}], 1\le r\le q)$,  $N\in\N \}$ 
consists of probability measures on $\Db (z)$ that 
satisfy the system of equations
\begin{equation}\label{limitpointeq}
\int f d\mu^z_r =\int f\Big( (z- \sum_{s=1}^q
\sigma_{rs}^2  \Delta_s^{\frac{2}{\alpha}} x_s)^{-1}\Big) 
\prod_{s=1}^qdP^{\mu^z_s}(x_s)
\end{equation}
for $r\in\{1,\ldots,q\}$ 
and every bounded continuous function $f$ on $\Db (z)$.
\end{prop}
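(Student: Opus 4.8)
The plan is to combine the truncated Schur complement identity from Lemma~\ref{approxss} with a classical stable central limit theorem for the quadratic form appearing there, passing to a limit point of the sequence $(\E[L_{N,r}^{z,\kappa}])_N$. First I would fix a limit point $(\mu_r^z,1\le r\le q)$ and a subsequence along which $\E[L_{N,r}^{z,\kappa}] \to \mu_r^z$ weakly for every $r$; these are probability measures on the compact set $\Db(z)$ since all $L_{N,r}^{z,\kappa}$ are. By Lemma~\ref{approxss}, for a Lipschitz $f$ on $\Db(z)$ it suffices to identify the limit of $\E[f((z-S_N^r)^{-1})]$, where $S_N^r := \sum_{k=1}^N \widetilde A_N^\kappa([Nb_r],k)^2 G_N^\kappa(z)_{kk}$ and $\widetilde\bA_N^\kappa$ is an independent copy of $\bA_N^\kappa$, so that conditionally on $\bG_N^\kappa(z)$ the summands are independent.

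The core step is to show that, conditionally on $\bG_N^\kappa(z)$, the random variable $S_N^r$ converges in distribution to $\sum_{s=1}^q \sigma_{rs}^2 \Delta_s^{2/\alpha} x_s$ with $x_s$ distributed according to $P^{\mu_s^z}$, independent across $s$. Here I would split the sum over $k$ into the $q$ blocks $k\in([Nb_{s-1}],[Nb_s]]$. On the $s$-th block the coefficients are $\widetilde A_N^\kappa([Nb_r],k)^2 = \sigma_{rs}^2 a_N^{-2} x_{[Nb_r]k}^2 {\bf 1}_{|x_{[Nb_r]k}|<N^\kappa a_N}$, i.e.\ i.i.d.\ truncated heavy-tailed weights times the deterministic constant $\sigma_{rs}^2$, while the ``directions'' are the (complex, equivalently $\R^2$-valued) numbers $G_N^\kappa(z)_{kk}$, whose empirical distribution over that block is $L_{N,s}^{z,\kappa}\to\mu_s^z$. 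This is exactly the setup of the stable CLT recalled from \cite[Section~5]{BAG6}: a sum of i.i.d.\ products of an $\alpha/2$-stable-domain scalar in the variable $i,j$ and a deterministic $\C$-valued vector whose empirical measure converges, with normalization $a_N^{-2} = (a_N^2)^{-1}$ matching the $\alpha/2$-index, the factor $\Delta_s$ entering because the $s$-th block has $\sim \Delta_s N$ terms so the effective normalization per block is $(\Delta_s N \cdot a_N^2 N^{-1})^{-1}$-type, producing the $\Delta_s^{2/\alpha}$ scaling on $x_s$. The independence of $x_1,\dots,x_q$ is automatic since the blocks use disjoint families of entries $x_{[Nb_r]k}$. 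I would invoke the relevant convergence lemma of \cite{BAG6} (the analog of its Lemma~5.x establishing convergence of such sums to $P^\mu$-distributed limits) essentially verbatim, the only new ingredient being the piecewise-constant structure of $\sigma^N$, which merely multiplies block $s$ by the bounded constant $\sigma_{rs}^2$ and is harmless.

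Having established this conditional convergence, I would finish by noting that $(z-S_N^r)^{-1}$ is a bounded (by $\Db(z)\subset$ disc of radius $\Im(z)^{-1}$) continuous function of $S_N^r$ away from $S_N^r = z$, which cannot occur since $\Im(S_N^r)\le 0$ while $\Im(z)>0$; hence $\E[f((z-S_N^r)^{-1})]$ (with $f$ bounded continuous, first for Lipschitz $f$, then general bounded continuous $f$ by density) converges to $\int f((z-\sum_s \sigma_{rs}^2\Delta_s^{2/\alpha}x_s)^{-1})\prod_s dP^{\mu_s^z}(x_s)$, using that $P^{\mu_s^z}$ is supported on the closure of $\C^-$ so the argument of $f$ stays in $\Db(z)$. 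Combined with the left-hand side of Lemma~\ref{approxss} converging to $\int f\,d\mu_r^z$ along the subsequence, this yields \eqref{limitpointeq} for Lipschitz $f$, hence for all bounded continuous $f$, for each $r$. The main obstacle I anticipate is making the conditional stable CLT rigorous uniformly enough to interchange the conditional limit with the outer expectation over $\bG_N^\kappa(z)$ — i.e.\ upgrading ``conditional convergence in distribution given $\bG_N^\kappa(z)$'' to convergence of the unconditional $\E[f((z-S_N^r)^{-1})]$; this is handled exactly as in \cite{BAG6} by first conditioning, applying the characteristic-function computation that defines $P^\mu$ with $\mu$ the (random) empirical measure $L_{N,s}^{z,\kappa}$, and then using the weak convergence $L_{N,s}^{z,\kappa}\to\mu_s^z$ together with continuity of $\mu\mapsto P^\mu$ on compactly supported measures, plus dominated convergence since everything in sight is bounded by $\Im(z)^{-1}$.
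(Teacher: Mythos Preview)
Your approach mirrors the paper's closely: the block decomposition of $S_N^r$, the stable CLT from \cite{BAG6} applied blockwise, the rescaling producing the $\Delta_s^{2/\alpha}$ factor, and the conditional independence across blocks are exactly the ingredients used there. There is, however, one genuine gap in your handling of the obstacle you correctly identify at the end. You write ``using the weak convergence $L_{N,s}^{z,\kappa}\to\mu_s^z$'', but all you have from the choice of subsequence is $\E[L_{N,s}^{z,\kappa}]\to\mu_s^z$; the \emph{random} empirical measure $L_{N,s}^{z,\kappa}$ need not converge at all. The stable CLT you invoke (Theorem~10.4 of \cite{BAG6}) is applied conditionally on $\bG_N^\kappa(z)$ and requires the realized empirical measure of the $G_N^\kappa(z)_{kk}$'s over each block to converge, so the continuity of $\mu\mapsto P^\mu$ is of no use while the argument $\mu=L_{N,s}^{z,\kappa}$ remains random and possibly non-convergent.

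The paper closes this gap via the concentration estimate of Lemma~\ref{concentration}, which gives $\P\bigl(|L_{N,s}^{z,\kappa}(f)-\E[L_{N,s}^{z,\kappa}(f)]|\ge\delta\bigr)\le C N^{-\epsilon}$ for Lipschitz $f$, and then passes to a further subsequence along which $L_{\phi(N),s}^{z,\kappa}\to\mu_s^z$ almost surely for every $s$. Once that almost-sure convergence is in hand, your conditional-CLT-plus-dominated-convergence argument goes through exactly as you describe. So the fix is simply to insert an appeal to Lemma~\ref{concentration} before applying the CLT.
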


The following concentration result is 
key to the proof of Proposition \ref{prop-limitpoint}.
\begin{lem}\label{concentration}
For $\kappa\in (0,\frac{1}{2-\alpha})$ let $\e= 1 -
\kappa(2-\alpha) >0$. There exists $c<\infty$ so that for 
$z\in \C^+$, $s\in \{1,\ldots, q\}$, $\de>0$,  $N \in \N$ and 
any Lipschitz function $f$ on $\Db (z)$, 
$$
\P\left(\left| L_{N,s}^{z,\kappa}(f)-
\E[L_{N,s}^{z,\kappa}(f)]\right|\ge \d\right)
\le \frac{c\|f\|_\rBL^2}{|\Im(z)|^4\d^2}  N^{-\e} \,,
$$
with $\|f\|_\rBL$ denoting here the Bounded Lipschitz 
norm of $f$ restricted to $\Db (z)$.  
\end{lem}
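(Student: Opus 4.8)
The plan is to prove this by a martingale difference / bounded-differences argument applied to the randomness underlying $\bA^\kappa_N$, exactly as in the proof of the analogous bound in \cite[Lemma 5.x]{BAG6} (the concentration lemma there). First I would observe that $L^{z,\kappa}_{N,s}(f) = \frac{1}{[Nb_s]-[Nb_{s-1}]} \sum_{k} f(G^\kappa_N(z)_{kk})$ is a function of the independent truncated entries $\{a_N^{-1} x_{ij} {\bf 1}_{|x_{ij}|<N^\kappa a_N}, 1\le i\le j\le N\}$. The key point is that this truncation makes each such entry uniformly bounded, namely $|\wt A^\kappa_N(i,j)| \le N^\kappa$; this is precisely what lets one carry out the variance computation and is why the cut-off exponent $\kappa$ enters the bound.

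Next I would set up the filtration by ordering the (independent) rows/columns of the matrix and letting $\F_m$ be generated by the first $m$ of them, writing $L^{z,\kappa}_{N,s}(f) - \E[L^{z,\kappa}_{N,s}(f)] = \sum_m \Delta_m$ with $\Delta_m = \E[\,\cdot\,|\F_m] - \E[\,\cdot\,|\F_{m-1}]$. The central estimate is a resolvent-perturbation bound: if $\bA$ and $\bA'$ differ only in one row and the symmetric column, then $G(z)_{kk}$ and $G'(z)_{kk}$ differ by at most (something like) $C|\Im(z)|^{-2}$ times the norm of that rank-$\le 2$ perturbation, and more usefully $\frac1N \sum_k |G(z)_{kk} - G'(z)_{kk}| \le C |\Im(z)|^{-2}/N$ by the rank-two / trace bound used throughout this paper (as in \eqref{approx0}). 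Since $f$ is Bounded-Lipschitz on $\Db(z)$, replacing one row changes $L^{z,\kappa}_{N,s}(f)$ by $O(\|f\|_\rBL |\Im(z)|^{-2} N^{-1})$ uniformly — that is, each martingale increment $|\Delta_m|$ is bounded (up to constants) by $\|f\|_\rBL |\Im(z)|^{-2} N^{-1}$, but with the extra gain that because the $k$-th summand only feels the $m$-th row appreciably when $k$ is close to $m$ (through the truncated entry of size $\le N^\kappa$), the conditional variance of each increment carries a factor $N^{2\kappa-2}$ rather than $N^{-2}$. Summing $N$ such increments gives $\Var(L^{z,\kappa}_{N,s}(f)) \le c \|f\|_\rBL^2 |\Im(z)|^{-4} N^{2\kappa-1} = c\|f\|_\rBL^2 |\Im(z)|^{-4} N^{-\e}$, and Chebyshev's inequality then yields the claimed bound
$$
\P\Big(|L^{z,\kappa}_{N,s}(f) - \E[L^{z,\kappa}_{N,s}(f)]| \ge \d\Big) \le \frac{c\|f\|_\rBL^2}{|\Im(z)|^4 \d^2} N^{-\e}\,.
$$

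The step I expect to be the main obstacle is the careful bookkeeping that produces the exponent $\e = 1-\kappa(2-\alpha)$ rather than simply $1$: one must combine the $N^\kappa$ bound on truncated entries with the estimate \eqref{truncatedmoments2} for $\zeta = 2$ — giving $\E[|x_{ij}|^2 {\bf 1}_{|x_{ij}|<N^\kappa a_N}] \sim \frac{\alpha}{2-\alpha} N^{\kappa(2-\alpha)} a_N^2 N^{-1}$ — so that the per-row contribution to the variance is governed by $N^{\kappa(2-\alpha)-1}$ rather than $N^{-1}$, and this is exactly where $\kappa(2-\alpha)$ is traded away from the exponent $1$. Since the entries of $\bsigma^N$ are uniformly bounded and the argument is otherwise a verbatim adaptation of \cite[Lemma 5.x]{BAG6} with $N^{-1/2}$ there replaced by $a_N^{-1}$ here, I would keep the write-up short, emphasizing only the resolvent Lipschitz bound, the rank-two trace estimate, and the variance accounting that yields $N^{-\e}$.
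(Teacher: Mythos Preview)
Your plan has the right top-level structure (martingale variance bound followed by Chebyshev), and you correctly locate the second-moment estimate \eqref{truncatedmoments2} as the source of the exponent $\kappa(2-\alpha)$, but the implementation diverges from the paper in a way that leaves a real gap.

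The paper does \emph{not} filter by rows. It views $F_N=L^{z,\kappa}_{N,s}(f)$ as a function of the $\sim N^2/2$ independent \emph{entries} $A^\kappa_N(i,j)$, $1\le i\le j\le N$, and uses the Efron--Stein/martingale bound
\[
\Var(F_N)\;\le\;\sum_{i\le j}\|\partial_{A(i,j)}F_N\|_\infty^2\,\Var\big(A^\kappa_N(i,j)\big).
\]
The crucial step is an explicit resolvent differentiation: $\partial_{A(m,l)}F_N=\frac1N\big([\bG^\kappa_N \bD_s(f')\bG^\kappa_N]_{ml}+[\bG^\kappa_N \bD_s(f')\bG^\kappa_N]_{lm}\big)$, where $\bD_s(f')$ is diagonal with entries $f'(G^\kappa_N(z)_{kk})$ on block $s$. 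Each matrix entry is bounded by the operator norm $\|f'\|_\infty|\Im z|^{-2}$, so $\|\partial_{A(i,j)}F_N\|_\infty\le 2\|f\|_\rBL/(N|\Im z|^2)$ uniformly in $i,j$. Combined with $\Var(A^\kappa_N(i,j))\le c_0N^{\kappa(2-\alpha)-1}$ and summed over $\sim N^2$ entries this yields exactly $N^{-\e}$.

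Your row-by-row scheme does not give the $O(N^{-1})$ bounded difference you claim. Replacing one row and the symmetric column is a rank-two perturbation $E$; then $G'-G=G'EG$ has rank $\le 2$ and trace norm at most $|\Im z|^{-2}\|E\|_1$, but $\|E\|_1$ is of order the $\ell^2$-norm of the replaced row, which under the $N^\kappa$ truncation is only bounded by $\sqrt N\,N^\kappa$. Hence $\frac1N\sum_k|G'_{kk}-G_{kk}|\lesssim N^{\kappa-1/2}$, not $N^{-1}$, and summing $N$ squared increments gives a variance bound of order $N^{2\kappa}$, which need not decay. (The rank/Lidskii device you cite via \eqref{approx0} controls \emph{eigenvalue} distributions, not sums of $f$ applied to diagonal resolvent entries.) There is also an arithmetic inconsistency in your middle paragraph: you obtain variance $N^{2\kappa-1}$ and then equate it with $N^{-\e}$, but $\e=1-\kappa(2-\alpha)\neq 1-2\kappa$. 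The correct exponent only emerges through the second moment of a \emph{single entry}, which is precisely why the entry-wise (gradient) decomposition, rather than a row-wise bounded-differences one, is the right tool here.
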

\begin{proof} Fixing $s\in  \{1,\ldots, q\}$ and $z \in \C^+$, note that
the value of $f$ outside the compact set $\Db (z)$ on which 
all probability measures $L_{N,s}^{z,\kappa}$ are supported, 
is irrelevant. We thus assume without loss of generality that 
$f$ is bounded and continuously differentiable and as in the
proof of \cite[Lemma 5.4]{BAG6}, let
$$
F_N(\bA):= 
L_{N,s}^{z,\kappa}(f)=\frac{1}{N}\sum_{k=[b_{s-1}N]+1}^{[b_{s}N]} 
f(G^\kappa_N(z)_{kk}) \,,
$$
a smooth function of the $n=N(N-1)/2$ independent, centered,
random variables $A^\kappa_N({k,l})$ for $1 \le k \le l \le N$.
By a classical martingale decomposition we see that 
\begin{equation}
\label{danslebureau}
\E[ (F_N-\E[F_N])^2] \le\sum_{1\le i\le j\le N}
 \|\partial_{A(i,j)}F_N\|_\infty^2 
\E[ (A^\kappa_N({i,j})-\E[A^\kappa_N({i,j})])^2] \,.
\end{equation}
Moreover, similarly to the proof of \cite[Lemma 5.4]{BAG6} we
have here that 
\begin{eqnarray*}
\partial_{A(m,l)} F_N (\bA)&\!\!\!=\!\!\!&
\frac{1}{N}\sum_{k=[N b_{s-1}]+1}^{[N b_s]}
\!\!\! f'(G_N^\kappa (z)_{kk})(G_N^\kappa(z)_{kl}G_N^\kappa(z)_{mk}
+G_N^\kappa (z)_{km}G_N^\kappa (z)_{lk})\\
&\!\!\!=\!\!\!&\frac{1}{N}\left(
[\bG_N^\kappa(z) \bD_s(f')\bG_N^\kappa(z)]_{ml}+[\bG_N^\kappa(z) \bD_s(f')
\bG_N^\kappa(z)]_{lm}\right)
\end{eqnarray*}
with $\bD_s(f')$ the $N$-dimensional diagonal matrix of entries
$$
D_s(f')_{kk} := f'(G_N^\kappa(z)_{kk}) 
{\bf 1}_{[N b_{s-1}] < k\le [N b_{s}]}
\,.
$$
As the spectral radius of $\bG_N^\kappa (z) \bD_s(f') \bG_N^\kappa (z)$ is 
bounded by $\|f'\|_\infty /  |\Im(z) |^2$, the same applies
for each entry of 
this matrix. By the preceding, such bounds imply that 
$$
\sup_{i,j} \|\partial_{A(i,j)}F_N\|_\infty 
\leq 2\|f\|_\rBL (N |\Im(z) |^2)^{-1} \,.
$$
Further, with $\bsigma^N$ uniformly bounded, 
from \eqref{truncatedmoments2} (for $\zeta=2$),
we get that for some $c_0$ finite and all $N$, 
$$
\sup_{1 \leq i \leq j \leq N} 
\E[|A^{\kappa}_N(i,j)|^2] \le c_0 N^{\kappa(2-\alpha)-1}\,.
$$
As $\e=1-\kappa(2-\alpha)>0$,
substituting these bounds into \eqref{danslebureau} we find  that 
$$
\E[ (F_N-\E[F_N])^2] \le 4 c_0 \|f\|_\rBL^2 |\Im(z)|^{-4} 
N^{-\epsilon} \,,
$$ 
and conclude the proof by Chebychev's inequality.
\end{proof}

\begin{proof}[Proof of Proposition \ref{prop-limitpoint}]
The sequence  of $q$-tuples of probability
measures $(\E[L_{N,r}^{z,\kappa}]$, $1\le r\le q)_{N\in\N}$, 
each supported in the compact set $\Db (z)$, 
is clearly tight. Considering a subsequence
$(\E[L_{\phi(N),r}^{z,\kappa}], 1\le r\le q)_{N\in\N}$ 
that converges weakly to a limit point $(\mu^z_r,
1\le r\le q)$, passing to a further 
subsequence still denoted $\phi(N)$
we have by Lemma \ref{concentration}
that $(L_{\phi(N),r}^{z,\kappa}, 1\le r\le q)_{N\in\N}$ 
also converges almost surely to $(\mu^z_r,
1\le r\le q)$, a $q$-tuple of probability measures on
$\Db (z)$. 

By Lemma \ref{approxss}, fixing $r\in\{1,\ldots,q\}$,
it suffices to show that 
$$U_N(z,r):=\sum_{k=1}^N
\widetilde A_N^\kappa([b_rN],k)^2 G_N^\kappa(z)_{kk}$$ 
is such that $U_{\phi(N)}(z,r)$ converges 
in law towards $\sum_{s=1}^q
\sigma_{rs}^2  \Delta_s^{2/\alpha} x_s$ where $(x_s, 1\le s\le q)$
are independent, with $x_s \in \C$ distributed according to 
$P^{\mu^z_s}$ for $s=1,\ldots,q$.

Note that $U_N(z,r) = \sum_{s=1}^q\sigma_{rs}^2 W_N(z,s)$, where
$$
W_N(z,s):=\sum_{k=[N b_{s-1}]+1}^{[N b_{s}]}
\widehat A_N^\kappa([b_rN],k)^2 G_N^\kappa(z)_{kk}\,,
$$
and the i.i.d. random variables 
$\widehat A_N^\kappa([b_rN],k)=\widetilde A_N^\kappa([b_rN],k)/ \sigma_{rs}$
are independent of $\bG_N^\kappa(z)$ and correspond to taking $\sigma \equiv 1$.
Next let
$$
a_N(s)=\inf\{ u: \P(|x_{ij}|\ge u)\le \frac{1}{N \Delta_{N,s}}\} \,,
$$
noting that by \eqref{stabledomain},
\begin{equation}\label{convaN}
\lim_{N\ra\infty}\frac{a_N(s)}{a_N} =\Delta_s^{1/\alpha}\,.
\end{equation}
Further, applying 
\cite[Theorem 10.4]{BAG6}   
for $X_k=\wt{x}_{[Nb_r] k}^2$, $\wt a_N=a_N(s)^2$ and 
$\ell(N)=(a_N/a_N(s))^2 N^{2\kappa}\ra \infty$, 
on the subsequence $\phi(N)$ and subject to the event that $
L_{\phi(N),s}^{z,\kappa}$ 
converges to $\mu^z_s$, we deduce that
$(a_N/a_N(s))^2 W_N(z,s)$ 
converges in law to $P^{\mu^z_s}$.
By the conditional 
independence of $W_N(z,s)$ for $1\le s\le q$ (per fixed 
$G_N^\kappa(z)$),
and \eqref{convaN} we arrive at the stated convergence in law
of $U_{\phi(N)}(z,r)$. 
\end{proof}

We next derive the analog of \cite[Theorem 5.5]{BAG6}. 
\begin{prop}\label{projectionoflimitpoint}
For $0< \kappa < \frac{1}{2(2-\alpha)}$ 
any subsequence of the functions  
$(X_{N,r} (z) := \E[L_{N,r}^{z,\kappa}(x^{\alpha/2})],1\le r\le q)$ 
from $\C^+$ to $\C^q$ 
has at least one limit point $(X_{r}(z),1 \le r \le q)$ 
such that 
$z \mapsto X_{r}(z)$ are analytic in $\C^+$, 
$|X_r(z)|\le (\Im(z))^{-\alpha/2}$ and for all $z \in \C^+$,
\begin{equation}\label{cocottes}
X_r(z)= C(\alpha)\int_0^\infty t^{-1} (it)^{\frac{\alpha}{2}}
e^{itz} \exp\{- (it)^{\frac{\alpha}{2}} \wh{X}_r(z) \} \, dt \,,
\end{equation}
with $C(\alpha)=\frac{e^{-i\frac{\pi\alpha}{2}}}{\Gamma(\frac{\alpha}{2})}$
and 
\begin{equation}\label{eq:yrdef}
\wh{X}_r(z) := \Gamma(1-\frac{\alpha}{2}) 
\sum_{s=1}^q |\sigma_{rs}|^\alpha \D_s X_s(z) \,.
\end{equation}
\end{prop}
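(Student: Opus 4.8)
The plan is to leverage Proposition \ref{prop-limitpoint} together with Proposition \ref{prop-limitpoint}'s characterization of the limit points $(\mu_r^z,1\le r\le q)$, and to read off the statement about $X_{N,r}(z)=\E[L_{N,r}^{z,\kappa}(x^{\alpha/2})]$ by testing the identity \eqref{limitpointeq} against the (bounded, continuous on $\Db(z)$) function $x\mapsto x^{\alpha/2}$, where $x^{\alpha/2}$ is understood via the branch fixed in the introduction, which is well-defined on the closed lower half-plane. First I would establish tightness and uniform boundedness: since each $L_{N,r}^{z,\kappa}$ is supported on $\Db(z)=\{x\in\C^-:|x|\le|\Im(z)|^{-1}\}$, we get $|X_{N,r}(z)|\le|\Im(z)|^{-\alpha/2}$ immediately, and a normal-families/Montel argument (the functions $z\mapsto X_{N,r}(z)$ are analytic on $\C^+$, being averages of $G_N^\kappa(z)_{kk}^{\alpha/2}$ which are analytic since $G_N^\kappa(z)_{kk}\in\C^-$ stays off $\R^-$) yields a subsequential limit $(X_r(z),1\le r\le q)$ that is analytic on $\C^+$ and obeys the same bound.

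Next I would identify the limiting equation. Passing to a further subsequence along which the empirical measures converge, Proposition \ref{prop-limitpoint} gives that the limit points $\mu_r^z$ satisfy \eqref{limitpointeq}. Plugging $f(x)=x^{\alpha/2}$ into \eqref{limitpointeq} expresses $X_r(z)=\int x^{\alpha/2}\,d\mu_r^z(x)$ as
$$
X_r(z)=\int \Big(z-\sum_{s=1}^q\sigma_{rs}^2\Delta_s^{2/\alpha}x_s\Big)^{-\alpha/2}\prod_{s=1}^q dP^{\mu_s^z}(x_s)\,.
$$
The key analytic step is then a Laplace/Fourier representation: write $w^{-\alpha/2}=\frac{1}{\Gamma(\alpha/2)}\int_0^\infty t^{\alpha/2-1}e^{-tw}\,dt$ for $\Re(w)>0$, applied with $w=-i(z-\sum_s\sigma_{rs}^2\Delta_s^{2/\alpha}x_s)$ (which has positive real part since $\Im(z)>0$ and each $x_s\in\C^-$, so that $\Im(\sum_s\cdots)\le 0$), to get, after the substitution $t\mapsto -it$ rotating the contour,
$$
X_r(z)=\frac{e^{-i\pi\alpha/2}}{\Gamma(\alpha/2)}\int_0^\infty t^{\alpha/2-1}e^{itz}\,\E\Big[\exp\{-it\textstyle\sum_{s=1}^q\sigma_{rs}^2\Delta_s^{2/\alpha}x_s\}\Big]\,dt\,,
$$
where the expectation is over independent $x_s\sim P^{\mu_s^z}$. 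The contour-rotation must be justified by a standard Cauchy-theorem argument (estimating the quarter-circle arcs at $0$ and $\infty$, using $\Im(z)>0$ for decay at infinity and $\alpha/2>0$ for integrability at the origin).

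Now I would compute the characteristic function of $\sum_s \sigma_{rs}^2\Delta_s^{2/\alpha}x_s$ at the real parameter determined by the scalar $t$; since the $x_s$ are independent, this factors, and by Definition 2.x (the law $P^{\mu}$) one evaluates $\E[\exp\{-it\,c\,x_s\}]$ in terms of $v_{\mu_s^z,\alpha/2}$ and $\beta_{\mu_s^z,\alpha/2}$ — here one uses the specific $\alpha/2$-stable form, namely that the characteristic exponent is of the stated $(1-i\beta\tan(\pi\alpha/4))$-type, and matches $v_{\mu_s^z,\alpha/2}(\bt)^{\alpha/2}(1-i\beta(\bt)\tan(\pi\alpha/4))$ to $\Gamma(1-\alpha/2)$ times $\int x^{\alpha/2}d\mu_s^z(x)=\Gamma(1-\alpha/2)X_s(z)$, weighted by $|\sigma_{rs}|^\alpha\Delta_s$. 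This is precisely the computation performed in \cite[proof of Theorem 5.5]{BAG6} for the $\sigma\equiv 1$ case; the only new feature is the extra book-keeping of the $q$ blocks and the coefficients $\sigma_{rs}^2\Delta_s^{2/\alpha}$, which combine with the $\Delta_s^{(\alpha/2)\cdot(2/\alpha)}=\Delta_s$ arising from $x_s\mapsto \Delta_s^{2/\alpha}x_s$ to produce exactly $|\sigma_{rs}|^\alpha\Delta_s$ in the definition \eqref{eq:yrdef} of $\wh X_r(z)$. Substituting back gives \eqref{cocottes}.

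I expect the main obstacle to be the rigorous justification of the contour rotation and the interchange of the $t$-integral with the $\prod dP^{\mu_s^z}$-expectation, uniformly enough to preserve analyticity of the limit in $z$; one must check integrability near $t=0$ (fine, as $\alpha/2-1>-1$) and decay as $t\to\infty$ (which relies on $\Im(z)>0$ giving $|e^{itz}|=e^{-t\Im(z)}$, together with the fact that $|\E[\exp\{-it\sum_s\sigma_{rs}^2\Delta_s^{2/\alpha}x_s\}]|\le 1$ since each $x_s\in\C^-$ makes the exponent have nonpositive real part). A secondary point is ensuring the branch conventions for $w^{-\alpha/2}$ are consistent with the paper's fixed branch of $x\mapsto x^\alpha$ on $\C\setminus\R^-$, so that the constant $C(\alpha)=e^{-i\pi\alpha/2}/\Gamma(\alpha/2)$ comes out correctly; this is a matter of tracking the $\pi\alpha/2$ phase through the rotation $t\mapsto -it$. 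Once \eqref{cocottes}–\eqref{eq:yrdef} are established for one subsequential limit, analyticity and the bound $|X_r(z)|\le(\Im(z))^{-\alpha/2}$ have already been obtained above, completing the proof.
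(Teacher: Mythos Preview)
Your proposal is correct and follows essentially the same route as the paper: Montel's theorem for the analytic limit, a further subsequence for the measures $\mu_r^z$, then plugging $f(x)=x^{\alpha/2}$ into \eqref{limitpointeq}, applying the integral representation of $z^{-\alpha/2}$, Fubini, and the stable characteristic-function identity. The only difference is packaging: the paper cites the two key identities directly from \cite{BAG6} (namely \eqref{thecalculus}, i.e.\ $z^{-\alpha/2}=C(\alpha)\int_0^\infty t^{-1}(it)^{\alpha/2}e^{itz}\,dt$, and \eqref{cocott}, i.e.\ $\int e^{-itx}\,dP^\nu(x)=\exp(-\Gamma(1-\tfrac{\alpha}{2})(it)^{\alpha/2}\int x^{\alpha/2}\,d\nu(x))$), which absorbs your contour-rotation and characteristic-function computations and makes the Fubini justification immediate.
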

\begin{proof}
The proof is an easy adaptation of \cite[Theorem 5.5]{BAG6}.
In fact, for each $1\le r\le q$, the analytic functions 
$X_{N,r} (z)$ on $\C^+$ are 
uniformly bounded by $(\Im(z))^{-\alpha/2}$ (hence uniformly 
bounded on compacts). 
Consequently, by Montel's theorem, 
any subsequence $(X_{\phi(N),r}(z), 1 \le r \le q)$ 
has a limit point
$(X_r(z), 1 \le r \le q)$ 
(with respect to uniform convergence on compacts), 
consisting of analytic functions on $\C^+$ 
(c.f. \cite[Theorem 17.21]{Chae}),
that obviously are also bounded by $(\Im(z))^{-\alpha/2}$. 
Fixing $z \in \C^+$ and passing to a further sub-subsequence 
along which the compactly supported probability measures
$\E[L_{N,r}^{z,\kappa}]$ converge weakly to 
$\mu^z_r$ for all $1 \leq r \leq q$, it follows by 
definition that 
$X_r(z)= \int x^{\frac{\alpha}{2}}  d\mu^z_r(x)$
(as $x \mapsto x^{\alpha/2}$ is in $\Ca_b(\Db (z))$).
Next, we prove 
\eqref{cocottes} by applying  \cite[Lemma 5.6]{BAG6} which states that
for all $z\in\C^+$,
\begin{equation}\label{thecalculus}
z^{-\frac{\alpha}{2}}
= C(\alpha)\int_0^\infty t^{-1} (it)^{\frac{\alpha}{2}} e^{it z} dt \,.
\end{equation}
Indeed, combining \eqref{limitpointeq} and \eqref{thecalculus}
we see that 
\begin{eqnarray*}
X_r(z)&=& \int
\big(z- \sum_{s=1}^q
\sigma_{rs}^2 \Delta_s^{\frac{2}{\alpha}} x_s\big)^{-\frac{\alpha}{2}} 
 \, \prod_{s=1}^qdP^{\mu^z_s}(x_s)\\
&=&  C(\alpha)\int\int_0^\infty 
t^{-1} (it)^{\frac{\alpha}{2}} \exp\{it (z- \sum_{s=1}^q
\sigma_{rs}^2  \Delta_s^{\frac{2}{\alpha}} x_s)\} dt 
\, \prod_{s=1}^qdP^{\mu^z_s}(x_s) \,.
\end{eqnarray*}
Recall \cite[Theorem 10.5]{BAG6} that for
$\alpha \in (0,2)$ and
any probability measure $\nu$ compactly supported in the closure
of $\C^-$, 
\begin{equation}\label{cocott}
\int e^{-itx} dP^{\nu}(x)= \exp (-\Gamma(1-\frac{\alpha}{2})(it)^{\frac{\alpha}{2}}\int
x^{\frac{\alpha}{2}}d\nu(x))\,.
\end{equation}
Since $z\in\C^+$ and $\Im (x_s) \leq 0$, by  
Fubini's theorem and \eqref{cocott} we deduce that
\begin{eqnarray*}
X_r(z)&=& C(\alpha)\int_0^\infty 
t^{-1} (it)^{\frac{\alpha}{2}} e^{itz}\prod_{s=1}^q \Big(\int  
\exp\{ -it \sigma_{rs}^2  \Delta_s^{\frac{2}{\alpha}} x_s \}
dP^{\mu^z_s}(x_s) \, \Big) dt\\
&=& C(\alpha)\int_0^\infty 
t^{-1} (it)^{\frac{\alpha}{2}} e^{itz}\prod_{s=1}^q
\exp\{- \Gamma(1-\frac{\alpha}{2}) (it)^{\frac{\alpha}{2}}
|\sigma_{rs}|^\alpha \D_s X_s(z)\}
dt \,,
\end{eqnarray*}
as claimed. 
\end{proof}

\subsection{Properties of the functions $(X_r,1\le r\le q)$}

We provide now key information about 
$X_r(z)$ of Proposition \ref{projectionoflimitpoint}.
\begin{lem}\label{boundg}
For $0< \kappa < \frac{1}{2(2-\alpha)}$, $z \in \C^+$, if
$X_s(z)$ is as in Proposition \ref{projectionoflimitpoint}
and $a_s$ are non-negative for 
$s \in \{1,\ldots,q\}$, then 
$(-z)^{-\frac{\alpha}{2}} \sum_{s=1}^q a_s X_s(z)$ is 
in the set  
${\mathcal K}_\alpha:=
\{Re^{i\theta}:
|\theta|\le \frac{\alpha\pi}{2}, \, R \geq 0\}$ on which  
for each $\beta>0$, the entire function 
\begin{equation}\label{defgab}
g_{\alpha,\beta}(y): =
\int_0^\infty t^{\frac{\beta}{2}-1} e^{-t} \exp\{-
t^{\frac{\alpha}{2}} y\} dt \,, 
\end{equation}
is uniformly bounded. In particular, this applies
to $g_\alpha=g_{\alpha,\alpha}$, to $h_\alpha=g_{\alpha,2}$
and their derivatives of all order.  
\end{lem}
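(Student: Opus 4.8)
The plan is to establish the two claims in turn: first that $w(z):=(-z)^{-\alpha/2}\sum_{s=1}^q a_s X_s(z)$ lies in $\KK_\alpha$ whenever $a_s\ge 0$, and second that $g_{\alpha,\beta}$ is uniformly bounded on $\KK_\alpha$ (with the statements about $g_\alpha$, $h_\alpha$ and derivatives being special cases, since $g_{\alpha,\beta}'(y)=-\int_0^\infty t^{\beta/2+\alpha/2-1}e^{-t}e^{-t^{\alpha/2}y}\,dt=-g_{\alpha,\beta+\alpha}(y)$ and inductively all derivatives are of the same form with a larger exponent $\beta$).

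For the first claim I would argue directly from the integral representation in Proposition \ref{projectionoflimitpoint}. Each $X_r(z)=\int x^{\alpha/2}\,d\mu^z_r(x)$ where $\mu^z_r$ is supported on $\Db(z)=\{x\in\C^-:|x|\le|\Im z|^{-1}\}$, the closed lower half-plane. With the chosen branch ($x^\alpha$ analytic on $\C\setminus\R^-$ and $(i)^\alpha=e^{i\pi\alpha/2}$), a point $x=Re^{i\phi}$ with $\phi\in[-\pi,0]$ has $x^{\alpha/2}=R^{\alpha/2}e^{i\alpha\phi/2}$, so $\arg(x^{\alpha/2})\in[-\alpha\pi/2,0]$; that is, $x^{\alpha/2}$ lies in the closed convex cone $\wh{\KK}_\alpha=\{R_0 e^{i\varphi}:-\alpha\pi/2\le\varphi\le 0,\ R_0\ge 0\}$ introduced in Theorem \ref{weakeningD}. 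Since this cone is convex and closed under multiplication by nonnegative scalars, and since $\mu^z_r$ is a probability measure, the average $X_r(z)$ and any nonnegative combination $\sum_s a_s X_s(z)$ also lie in $\wh{\KK}_\alpha$. Finally, for $z\in\C^+$ one has $-z\in\C^-$, so $\arg(-z)\in(-\pi,0)$ and hence $\arg((-z)^{-\alpha/2})=-\tfrac{\alpha}{2}\arg(-z)\in(0,\alpha\pi/2)$; multiplying an element of $\wh{\KK}_\alpha$ (argument in $[-\alpha\pi/2,0]$) by this factor yields a point whose argument lies in $(-\alpha\pi/2,\alpha\pi/2)$, i.e. in $\KK_\alpha$. (One checks the degenerate case $\sum_s a_s X_s(z)=0$ trivially gives $0\in\KK_\alpha$.)

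For the second claim I would bound $|g_{\alpha,\beta}(y)|$ for $y=Re^{i\theta}\in\KK_\alpha$ by estimating the modulus of the integrand: $|e^{-t}\exp\{-t^{\alpha/2}y\}|=e^{-t}\exp\{-t^{\alpha/2}R\cos\theta\}$. The point of the cone $\KK_\alpha$ is precisely that $|\theta|\le\alpha\pi/2<\pi$, so $\cos\theta\ge\cos(\alpha\pi/2)=:c_\alpha>0$ uniformly; hence $\exp\{-t^{\alpha/2}R\cos\theta\}\le 1$ for all $R\ge 0$ and all $t\ge 0$. Therefore
\[
|g_{\alpha,\beta}(y)|\le\int_0^\infty t^{\beta/2-1}e^{-t}\,dt=\Gamma(\beta/2)<\infty,
\]
a bound independent of $y\in\KK_\alpha$. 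Applying this with $\beta=\alpha$ gives boundedness of $g_\alpha$, with $\beta=2$ gives boundedness of $h_\alpha$ (consistent with $h_\alpha=1-\tfrac{\alpha}{2}yg_\alpha(y)$), and with $\beta$ replaced by $\beta+k\alpha$ handles the $k$-th derivative.

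The only genuinely delicate point is the branch bookkeeping in the first claim — making sure that the cone $\wh{\KK}_\alpha$ is the exact image of $\Db(z)$ under $x\mapsto x^{\alpha/2}$ under the prescribed branch, and that multiplication by $(-z)^{-\alpha/2}$ maps it into $\KK_\alpha$ without the arguments wrapping past $\pm\pi$; since all arguments involved stay strictly inside $(-\pi,\pi)$ throughout, the branch is unambiguous and additivity of arguments applies. The rest is the elementary estimate above, which works precisely because $\KK_\alpha$ has half-angle $\alpha\pi/2$ strictly less than $\pi$.
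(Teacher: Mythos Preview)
Your argument for the first claim --- that $\sum_s a_s X_s(z)\in\wh{\KK}_\alpha$ and hence $w(z)\in\KK_\alpha$ --- is correct and matches the paper's reasoning essentially verbatim.

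The second claim, however, has a genuine gap. You assert that for $y=Re^{i\theta}\in\KK_\alpha$ one has $\cos\theta\ge\cos(\alpha\pi/2)=:c_\alpha>0$, and hence $|\exp\{-t^{\alpha/2}y\}|\le 1$. But $c_\alpha>0$ requires $\alpha\pi/2<\pi/2$, i.e.\ $\alpha<1$. For $\alpha\in[1,2)$ the cone $\KK_\alpha$ has half-angle $\alpha\pi/2\ge\pi/2$, so it contains rays with $\cos\theta<0$; on those rays $|\exp\{-t^{\alpha/2}y\}|=\exp\{t^{\alpha/2}R|\cos\theta|\}$ is not bounded by $1$, and your integral bound $\int_0^\infty t^{\beta/2-1}e^{-t+t^{\alpha/2}R|\cos\theta|}\,dt$ actually diverges as $R\to\infty$ (the exponent has a positive maximum of order $R^{2/(2-\alpha)}$). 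So the direct modulus estimate cannot give uniform boundedness on $\KK_\alpha$ when $\alpha\ge 1$.

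The paper's proof circumvents this by invoking the identity (from \cite[equation (40)]{BAG6})
\[
(-z)^{-\beta/2} g_{\alpha,\beta}(y)
= \int_0^\infty t^{-1} (it)^{\beta/2} e^{itz}\exp\bigl[-(-z)^{\alpha/2}(it)^{\alpha/2}y\bigr]\,dt,
\]
valid for any $z\in\C^+$. One then \emph{chooses} $z=e^{i\eta}$ (or $e^{i(\pi-\eta)}$, depending on the sign of $\Im y$) with $\eta>0$ small enough that $\varphi:=\pi\alpha/4+\alpha\eta/2<\pi/2$; this is possible precisely because $\pi\alpha/4<\pi/2$ for all $\alpha<2$. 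With this choice the real part of $(-z)^{\alpha/2}(i)^{\alpha/2}y$ is bounded below by $|y|\cos\varphi>0$ uniformly over $y\in\KK_\alpha$, and one obtains $|g_{\alpha,\beta}(y)|\le(\sin\eta)^{-\beta/2}g_{\alpha,\beta}(0)$. In effect the identity performs a contour rotation that recovers the cancellation your modulus bound throws away; without it the argument does not go through for $\alpha\in[1,2)$.
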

\begin{proof} Recall that for $z \in \C^+$ 
the measures $L^{z,\kappa}_{N,s}$ are each supported on $\C^{-}$. 
Hence, by definition 
each of the functions $X_{N,s}(z)$
is in the closed cone 
\begin{equation}\label{whKa}
\wh{\KK}_\alpha := \{R_0 e^{i \wh{\theta}}:
-\frac{\alpha\pi}{2} \le \wh{\theta} \le 0 ,\, R_0 \geq 0\} \, ,
\end{equation}
and thus so is any limit point $X_s(z)$ of $X_{N,s}(z)$.
Setting $w := (-z)^{-\frac{\alpha}{2}} 
\sum_{s=1}^q a_s X_s(z)$, it thus follows that 
for any $z \in \C^{+}$ and non-negative $a_s$,
\begin{equation}\label{eq:amir-arg-id}
0 \le \arg(w) + \frac{\alpha}{2} \arg(z) \le \frac{\alpha\pi}{2} \,. 
\end{equation}
In particular, $w \in {\mathcal K}_\alpha$, as claimed. 
Key to the boundedness of $g_{\alpha,\beta}(\cdot)$ on this set is the
identity of \cite[equation (40)]{BAG6}, where it is shown that 
\begin{equation}\label{eq:gziden}
(-z)^{-\beta/2} g_{\alpha,\beta}(y)
= \int_0^\infty t^{-1} (it)^{\frac{\beta}{2}} e^{it
z}\exp[-(-z)^{\frac{\alpha}{2}}(it)^{\frac{\alpha}{2}}y] dt\,,
\end{equation}
for any $z\in\C^+$ and $y \in \C$. Indeed, for each $\a \in (0,2)$ 
set $\eta=\eta(\a) \in (0,\pi/2]$ small enough so
$$
\varphi := \frac{\pi \a}{4} + \frac{\a}{2} \eta < \frac{\pi}{2}
$$
and let $z=e^{i \eta} \in \C^+$ when $\Im(y) \ge 0$ 
while $z=e^{i(\pi-\eta)} \in \C^+$ otherwise. 
Either way, $\Im(z) = \sin(\eta)>0$ and if   
$y = R e^{i \theta} \in \KK_\a$, that is $|\theta|\le \alpha\pi/2$, then 
$$
\Re \Big( (-z)^{\frac{\alpha}{2}}(i)^{\frac{\alpha}{2}} y \, \Big)
= R \cos(|\theta| -\frac{\pi\a}{4} + \frac{\a}{2} \eta) \ge 
R \cos(\varphi) >0\,.
$$
Setting $\xi:=\xi(\alpha) = \cos(\varphi)/(\sin(\eta))^{\a/2}>0$
we thus deduce from \eqref{eq:gziden} that for any $\beta>0$,
\begin{eqnarray}\label{eq:serban-new-bd}
|g_{\alpha,\beta}(y)| &\leq& 
\int_0^\infty t^{\frac{\beta}{2}-1} e^{-t \sin(\eta)} 
\exp[-t^{\frac{\alpha}{2}}|y| \cos(\varphi)] dt \nonumber \\
&=& (\sin (\eta))^{-\beta/2} g_{\a,\beta} ( \xi |y|) \le 
(\sin (\eta))^{-\beta/2} g_{\a,\beta} (0) \,, 
\end{eqnarray}
is uniformly bounded on ${\mathcal K}_\alpha$. 
\end{proof}

Recall that a mapping $\uu{f}: \Ub \mapsto \C^q$
defined on some open $\Ub \subseteq \C^n$ 
is holomorphic on $\Ub$ if each of its coordinates 
admits a convergent power series expansion around each point of $\Ub$. 
Proposition \ref{projectionoflimitpoint} suggests 
viewing $(X_r(z), 1 \le r \le q)$ as an implicit mapping from
$\C^+$ into $\C^q$ 
that is defined in terms of the zero set of
the holomorphic $\uu{f}=(f_r(z,w_1,\dots,w_q),1\leq r\leq q)$, where  
$$
f_r(z,w_1,\dots,w_q)=w_r-C(\alpha)\int_0^\infty t^{-1}(it)^\frac{\alpha}{2}
e^{itz}\exp\{-(it)^\frac{\alpha}{2} \sum_{s=1}^q c_{rs} w_s\}\,dt,
$$
and $c_{rs}=\Gamma\left(1-\frac{\alpha}{2}\right)
|\sigma_{rs}|^\alpha\Delta_s$. Key properties 
of $({X}_r(z) , 1\leq r\leq q)$ are then consequences 
of the rich theory of zero sets 
of holomorphic mappings. 
We shall employ this strategy, 
but for $\uu{Y}(z) \equiv (Y_1(z),\ldots,Y_q(z))$ 
where $Y_r(z):= (-z)^{-\frac{\alpha}{2}} \wh{X}_r(z)$ and
$\wh{X}_r(z)$ is given by \eqref{eq:yrdef}. Indeed, our  
next result, extending \cite[Theorem 6.1]{BAG6},
characterizes $\uu{Y}(z)$ as implicitly defined 
for $u=z^{-\alpha}$ via $u \mapsto \uu{V}(u)$ such that
\begin{equation}\label{poiu}
\uu{F}(u,\underline{V}(u))=\uu{0}\,.
\end{equation}
With $a_{rs}=C_\alpha |\sigma_{rs}|^\alpha\D_s$,
the holomorphic mapping $\uu{F}:\C \times \C^{q} \mapsto \C^q$ 
is given for $u\in\C$ and $\underline{y}=(y_1,\ldots, y_q)\in\C^q$ by
\begin{equation}\label{eq:Fdef}
F_r(u,\underline{y})=y_r-u\sum_{s=1}^q a_{rs}
g_\alpha(y_s)\qquad 1\le r\le q
\end{equation}

\begin{prop}\label{uniqueprop}
Setting $\Eaa :=\{u \in\C : -\pi \a < \arg(u) < 0 \}$,  
there exist 
$\varepsilon=\varepsilon(\sigma)>0$  
and a unique analytic solution $\uu{y}=\uu{V}(u)$
of $\uu{F}(u,\uu{y})= \uu{0}$ on the open set 
$\Eaae := \Eaa \cup \Ball (0,\varepsilon)$. 
Further, 
there exists a unique collection of 
analytic functions $(X_r(z), 1\le r\le q)$ on $\C^+$ 
such that $|X_r(z)| \le (\Im(z))^{-\frac{\alpha}{2}}$ 
and for which \eqref{cocottes} holds. 
The functions $Y_r(z)= (-z)^{-\frac{\alpha}{2}} \wh{X}_r(z)$
are then the unique solution of \eqref{systemeqY}
analytic on $z\in \C^+$ and each tending to zero as 
$|z| \to \infty$. 
Moreover, 
$Y_r(z) = V_r(z^{-\alpha}) \in {\mathcal K}_\alpha$ 
are for $r=1,\ldots,q$ such that $Y_r(-\oo{z})=\oo{Y_r}(z)$ 
and have an analytic continuation through $(R,\infty)$ 
for some finite $R=R(\sigma)$, whereas
$z^{\frac{\alpha}{2}} X_r(z)$ (hence $z^\alpha Y_r(z)$),
are uniformly bounded on $\C^+$. 
\end{prop}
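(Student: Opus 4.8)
The plan is to transfer the problem to the implicitly defined map $\uu{V}$ of \eqref{poiu}--\eqref{eq:Fdef}, whose existence and uniqueness on $\Eaae$ will come from the holomorphic implicit function theorem together with the solution already furnished by Proposition \ref{projectionoflimitpoint}, and then to read off all the remaining assertions about the $X_r$ and $Y_r$. Since $\alpha<2$, the map $z\mapsto z^{-\alpha}$ is a biholomorphism of $\C^+$ onto the sector $\Eaa$, so it suffices to study $\uu{V}(u)$ for $u=z^{-\alpha}$. First I would set up the dictionary: inserting identity \eqref{eq:gziden} (with $\beta=\alpha$) into \eqref{cocottes} and recalling \eqref{eq:yrdef}, one sees that any analytic $(X_r)$ solving \eqref{cocottes} satisfies $X_r(z)=C(\alpha)(-z)^{-\alpha/2}g_\alpha(Y_r(z))$ where $Y_r(z):=(-z)^{-\alpha/2}\wh{X}_r(z)$; the elementary branch bookkeeping (which gives $(-z)^{-\alpha}=e^{i\pi\alpha}z^{-\alpha}$, $z^{\alpha/2}(-z)^{-\alpha/2}=i^\alpha$ and $\Gamma(1-\frac{\alpha}{2})C(\alpha)=e^{-i\pi\alpha}C_\alpha$) then turns this into $z^\alpha Y_r(z)=C_\alpha\sum_s|\sigma_{rs}|^\alpha\D_s g_\alpha(Y_s(z))$, that is $\uu{F}(z^{-\alpha},\uu{Y}(z))=\uu{0}$, and likewise into $z^{\alpha/2}X_r(z)=g_\alpha(Y_r(z))/\Gamma(\frac{\alpha}{2})$.

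For $\uu{V}$ itself: at $u=0$ one has $F_r(0,\uu{y})=y_r$, so $\uu{y}=\uu{0}$ is forced and the Jacobian $\partial_{\uu{y}}\uu{F}(0,\uu{0})$ is the identity matrix; the holomorphic implicit function theorem gives, for some $\varepsilon=\varepsilon(\sigma)>0$, a unique analytic $\uu{W}$ on $\Ball(0,\varepsilon)$ with $\uu{W}(0)=\uu{0}$. On the other hand, Proposition \ref{projectionoflimitpoint} provides analytic $(X_r)$ on $\C^+$ with $|X_r(z)|\le(\Im(z))^{-\alpha/2}$ obeying \eqref{cocottes}; by Lemma \ref{boundg} (applied with $a_s=\Gamma(1-\frac{\alpha}{2})|\sigma_{rs}|^\alpha\D_s\ge0$) the associated $Y_r(z)=(-z)^{-\alpha/2}\wh{X}_r(z)$ lie in ${\mathcal K}_\alpha$, where $g_\alpha$ is bounded, so $z^\alpha Y_r$ is uniformly bounded and the corresponding $\uu{V}(u)=\uu{Y}(z)$, a priori analytic on $\Eaa$, satisfies $|V_r(u)|\le c|u|\to0$ as $u\to 0$. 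Since for $|u|$ small the map $\uu{y}\mapsto(u\sum_s a_{rs}g_\alpha(y_s))_r$ is a contraction on a small ball, with a unique small fixed point, $\uu{V}$ and $\uu{W}$ agree near $0$ inside $\Eaa$, hence on the whole slice $\Eaa\cap\Ball(0,\varepsilon)$ by the identity theorem; thus $\uu{W}$ extends $\uu{V}$ to an analytic solution on $\Eaae=\Eaa\cup\Ball(0,\varepsilon)$. Uniqueness on $\Eaae$ then follows because any analytic solution there takes value $\uu{0}$ at $0$, hence equals $\uu{W}$ near $0$ and, by the identity theorem, $\uu{V}$ throughout $\Eaae$.

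For the uniqueness of $(X_r)$ on $\C^+$ I would work directly with \eqref{cocottes}. Let $\Psi_z$ send $\uu{w}\in\C^q$ to the right-hand side of \eqref{cocottes} with $\wh{w}_r:=\Gamma(1-\frac{\alpha}{2})\sum_s|\sigma_{rs}|^\alpha\D_s w_s$. Using $|e^{itz}|=e^{-t\Im(z)}$, $|\exp\{-(it)^{\alpha/2}\wh{w}_r\}|\le\exp\{t^{\alpha/2}|\wh{w}_r|\}$ and the substitution $s=t\Im(z)$, together with $|C(\alpha)|\Gamma(\frac{\alpha}{2})=1$ and dominated convergence, one checks that for $\Im(z)$ large $\Psi_z$ maps the ball $\{\uu{w}:|w_r|\le 2(\Im(z))^{-\alpha/2}\ \forall r\}$ into itself and $\sum_t\|\partial_{w_t}\Psi_{z,r}\|\le c(\Im(z))^{-\alpha}<1$, so $\Psi_z$ is a contraction there. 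Hence the vector $(X_r(z))$ is uniquely determined whenever $\Im(z)\ge R(\sigma)$; two analytic solutions with the stated bound therefore coincide on that half-plane and, by the identity theorem, on all of $\C^+$. In particular the $(X_r)$ produced by Proposition \ref{projectionoflimitpoint} is the unique one.

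With $(X_r)$, hence $Y_r=(-z)^{-\alpha/2}\wh{X}_r$, now canonical, the remaining claims follow from the dictionary: $\uu{Y}$ solves \eqref{systemeqY} and $Y_r(z)=V_r(z^{-\alpha})$; $Y_r(z)\in{\mathcal K}_\alpha$ by Lemma \ref{boundg}, so $z^\alpha Y_r$ and $z^{\alpha/2}X_r=g_\alpha(Y_r)/\Gamma(\frac{\alpha}{2})$ are uniformly bounded on $\C^+$ and $Y_r(z)\to 0$ as $|z|\to\infty$. If $\uu{Y}'$ is another solution of \eqref{systemeqY} analytic on $\C^+$ tending to zero, then the function $\uu{V}'$ defined by $\uu{V}'(z^{-\alpha}):=\uu{Y}'(z)$ solves $\uu{F}(u,\cdot)=\uu{0}$ on $\Eaa$ with $\uu{V}'(u)\to\uu{0}$ as $u\to0$, extends to $\Eaae$ as above, and so equals $\uu{V}$; thus $\uu{Y}$ is unique. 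The symmetry $Y_r(-\bar z)=\overline{Y_r(z)}$ holds because $z\mapsto\overline{Y_r(-\bar z)}$ is analytic on $\C^+$, tends to zero, and solves \eqref{systemeqY} — here one conjugates \eqref{systemeqY} and uses $\overline{g_\alpha(y)}=g_\alpha(\bar y)$ (as $g_\alpha$ has real Taylor coefficients), $\overline{(-\bar z)^\alpha}=e^{-i\pi\alpha}z^\alpha$ and $e^{i\pi\alpha}\overline{C_\alpha}=C_\alpha$ — so uniqueness identifies it with $Y_r$. Finally, $\Ball(0,\varepsilon)$ contains the segment $(0,\varepsilon)\subset\R$ on which $\uu{V}$ is analytic, and $z\mapsto z^{-\alpha}$ carries a complex neighborhood of $(R,\infty)$, with $R=R(\sigma):=\varepsilon^{-1/\alpha}$, into it, so $Y_r=V_r\circ(\cdot)^{-\alpha}$ continues analytically through $(R,\infty)$. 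I expect the main obstacle to be the uniqueness of $(X_r)$: because $|C(\alpha)|\Gamma(\frac{\alpha}{2})=1$ exactly, the natural ball $\{|w_r|\le(\Im(z))^{-\alpha/2}\}$ need not be $\Psi_z$-invariant, so one must enlarge it slightly and keep careful control of the correction $\exp\{t^{\alpha/2}|\wh{w}_r|\}$ in the integrals; the branch-of-power bookkeeping, while routine, also has to be handled with care.
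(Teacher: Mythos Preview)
Your proof is correct and follows the paper's core strategy --- the implicit function theorem at $(0,\uu{0})$, the existence input from Proposition \ref{projectionoflimitpoint}, Lemma \ref{boundg} to land in $\KK_\alpha$, and the identity theorem to propagate uniqueness --- but you diverge from the paper in two sub-arguments. For the uniqueness of $(X_r)$, the paper takes a shorter route: once $(Y_r)$ is known to be unique, formula \eqref{eq:Xridt}, namely $X_r(z)=C(\alpha)(-z)^{-\alpha/2}g_\alpha(Y_r(z))$, recovers $(X_r)$ directly, so no contraction on $\Psi_z$ is needed and the delicate ball-enlargement you flag is avoided entirely. For the symmetry $Y_r(-\bar z)=\overline{Y_r(z)}$, the paper instead goes back to the matrix level, using $\bA_N^{\kappa,-\sigma}=-\bA_N^{\kappa,\sigma}$ to obtain $\overline{X_s^{-\sigma}}(z)=i^\alpha X_s^\sigma(-\bar z)$ and then invoking the invariance of \eqref{cocottes} under $\sigma\mapsto-\sigma$; your direct verification that $z\mapsto\overline{Y_r(-\bar z)}$ solves \eqref{systemeqY} (via $\overline{g_\alpha(y)}=g_\alpha(\bar y)$ and $e^{i\pi\alpha}\overline{C_\alpha}=C_\alpha$) and appeal to uniqueness is more self-contained and arguably cleaner, since it does not reach outside the analytic framework.
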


\begin{proof}
First, with $(-z)^{\frac{\alpha}{2}}(-z)^{-\frac{\alpha}{2}}=1$, 
we deduce from \eqref{eq:gziden} that \eqref{cocottes} is equivalent to 
\begin{equation}\label{eq:Xridt}
X_r(z)
=C(\alpha)(-z)^{-\frac{\alpha}{2}}g_{\alpha,\alpha} (Y_r(z)) \,,
\end{equation}
which in combination with \eqref{eq:yrdef} shows that 
$(Y_r(z), 1\le r \le q)$ satisfies \eqref{systemeqY}.
The existence of analytic solutions $(X_r(z), 1\le r\le q)$ 
and $(Y_r(z), 1\le r\le q)$ such that $|X_r(z)|\le 
(\Im (z))^{-\frac{\alpha}{2}}$  
is thus obvious from Proposition \ref{projectionoflimitpoint}.
This solution of \eqref{systemeqY}
consists by Lemma \ref{boundg} of analytic functions from $\C^+$ to 
${\mathcal K}_\alpha$. 
Further, by the boundedness of $g_\alpha(\cdot)$ on $\KK_\a$ 
we know that $|X_r(z)| \le \kappa |z|^{-\alpha/2}$
and $|Y_r(z)|\le \kappa |z|^{-\alpha}$ for some finite constant 
$\kappa$,
all $z \in \C^+$ and $r \in \{1,\ldots,q\}$. 

We turn to prove the uniqueness of the analytic solution
of \eqref{systemeqY} tending to zero as 
$\Im(z) \to \infty$ (hence the uniqueness of such solutions
tending to zero as $|z| \to \infty$). To this end,
considering $\uu{F}$ of (\ref{eq:Fdef}) note that
$\uu{F}(0,\uu{0})=\uu{0}$ and 
the complex Jacobian matrix of $\uu{y} \mapsto \uu{F}(0,\uu{y})$ 
at $\uu{y}=\uu{0}$ has a non-zero determinant
(since $\partial_{y_s} F_r(0,\uu{0})=\delta_{rs}$, with 
determinant one). Consequently,  
by the local implicit function theorem 
there are positive 
constants $\varepsilon$,  
$\delta$ and an analytic solution $\uu{y}=\uu{V}(u)$
of $\uu{F}(u,\uu{y})= \uu{0}$ on $\Ball (0,\varepsilon)$ 
which for any $|u| < \varepsilon$ is also 
the unique solution with $\|\uu{y}\| < \delta$.
Identifying $\C^+$ with $\Eaa$ via the analytic function $u=z^{-\alpha}$,
note that $\uu{Y}(z)
$ solves \eqref{systemeqY} for $z \in \C^+$ if and only if 
$\uu{V}(u)=\uu{Y}(z)$ satisfies \eqref{poiu} for 
$u \in \Eaa$. Consequently, setting $R = \varepsilon^{-1/\alpha}$ finite,
any two solutions $\uu{Y}_i(z)$, $i=1,2$ of \eqref{systemeqY} that
tend to zero as $\Im(z) \to \infty$ coincide once $\Im(z) > R$ 
is large enough to assure that 
$\max_{i=1,2} \|\underline{Y}_i(z)\| < \delta$.
The uniqueness of the analytic solution $z \mapsto \uu{Y}(z)$ 
of \eqref{systemeqY} on $\C^+$ tending to zero as $\Im(z) \to \infty$ 
then follows by the identity theorem. 
By \eqref{eq:Xridt} this implies also the uniqueness of the 
solution of \eqref{cocottes} which is analytic and bounded by 
$(\Im(z))^{-\frac{\alpha}{2}}$ throughout $\C^+$.
Moreover, by the identity theorem, $u \mapsto \uu{V}(u)$ extends
uniquely to an analytic solution of (\ref{poiu}) on 
$\Eaae$ and $\uu{Y}(z) = \uu{V}(z^{-\alpha})$ 
has an analytic extension through $(R,\infty)$.

Next, recall that $\bA_N^{\kappa,-\s}=-\bA_N^{\kappa,\s}$ 
are real-valued matrices, hence by definition  
$\bG_N^{\kappa,-\s} (z) = -\oo{\bG}_N^{\kappa,\s} (-\oo{z})$
for any $z \in \C^+$, implying by \eqref{eq:lnzr} that 
$L_{N,s}^{z,\kappa,-\s}(f(x))=L_{N,s}^{-\oo{z},\kappa,\s}(f(-\oo{x}))$.
If $x \in \Db (z)$ then so is $-\oo{x}$ and 
$\oo{x^{\alpha/2}} = i^\a (-\oo{x})^{\alpha/2}$. It thus follows 
from Proposition \ref{projectionoflimitpoint} that 
$\oo{X}_s^{-\s}(z)=i^\a X_s^\s(-\oo{z})$ for any $z \in \C^+$ and
$1 \le s \le q$. 
Since 
$(X_r^\s (z), 1\le r\le q)$ are uniquely determined 
by the equations \eqref{cocottes} which are invariant under 
$\s \mapsto -\s$ and 
$\oo{(-z)^{\alpha/2}} = i^\a (\oo{z})^{\alpha/2}$
for all $z \in \C^+$,
we thus deduce from \eqref{eq:yrdef} that 
$\oo{Y_r}(z)= Y_r(-\oo{z})$ for all
$1 \le r \le q$ and $z \in \C^+$. 
\end{proof}

To recap, for some $\varepsilon>0$ we got the existence of 
a unique analytic solution $\uu{y}=\uu{V}(u)$
of $\underline{F}(u,\underline{y})= \underline{0}$ 
on $\Eaae$ for the holomorphic mapping 
$\uu{F} : \C \times \C^q \mapsto \C^q$ of \eqref{eq:Fdef}.  
We proceed to show that $\uu{V}(u)$ 
has a continuous algebraic extension to $\oEaae$,
and in particular to $(0,\infty)$
(by algebraic extension we mean that 
\eqref{poiu} holds throughout $\oEaae$).
As we show in the sequel, this 
yields the claimed continuity of the density $\rho^\s$ in Theorem 
\ref{theo-limitpoint-uniq-amir}. 

To this end, recall 
that $\Mb\subseteq\C^n$ is an embedded
complex manifold (in short, a manifold),
of dimension $p$ if for each $\uu{a}\in \Mb$ there exist a
neighborhood $\Ub$ of $\uu{a}$ in $\mathbb C^n$ and a 
holomorphic mapping $\uu{f} : \Ub \mapsto \C^{n-p}$ such that 
$\Mb\cap \Ub = \{\uu{z} \in \Ub \colon \uu{f}(\uu{z})=\uu{0} \}$ 
and the complex Jacobian matrix of 
$\uu{f}(\cdot)$ is of rank $n-p$ at $\uu{a}$ 
(in short, rank$_{\uu{a}} (\uu{f})=n-p$, 
c.f \cite[Definition 2, Section A.2.2]{chirka}). 
Indeed, our claim is merely an application of
the following general extension result for
the mapping $\uu{F}$ of \eqref{eq:Fdef}, 
taking $u_0=0$ in the nonempty open simply connected set 
$\OO=\Eaae$ of piecewise smooth boundary.
\begin{prop}\label{prop:analytic}
Suppose $\uu{F}:\C \times \C^q \mapsto \C^q$ is a 
holomorphic mapping and $\uu{F}(u,\uu{V}(u))=\uu{0}$ for 
analytic $\uu{V} : \OO \mapsto \C^q$ and a nonempty 
open connected $\OO \subseteq \C$. 
Suppose further that the graph  
\begin{equation}\label{eq:vbdef}
\Vb :=\{(u,\underline{V}(u))\colon u\in\OO\}
\end{equation} 
of $\uu{V}$ is a one-dimensional complex manifold and 
the Jacobian determinant ${\rm det}[\partial_{\uu{y}}\uu{F}]$
is non-zero at some $\uu{v}_0=(u_0,\uu{V}(u_0))$ with 
$u_0 \in \OO$. Then, $\uu{V}(\cdot)$ has a continuous extension at  
boundary points $x \in \oo{\OO}$ where $\OO$ is locally connected
and $\uu{V}$ is locally uniformly bounded 
(i.e. $\OO \cup \{x\}$ admits 
a local basis of connected relative neighborhoods
and $\uu{V}$ is uniformly bounded on $U \cap \OO$
for some neighborhood $U$ of $x$ in $\C$). Moreover,
$\uu{F}(x,\uu{V}(x))=\uu{0}$ at any such point.
\end{prop}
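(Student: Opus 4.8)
The plan is to exhibit $\uu V$ as one branch of the analytic set $Z:=\uu F^{-1}(\uu{0})\subseteq\C\times\C^q$, and to show that at a boundary point $x$ of the stated type the cluster set of $\uu V$ along approaches to $x$ within $\OO$ is at once \emph{connected} (because $\OO$ is locally connected at $x$) and \emph{discrete} (because of the complex-analytic structure of $Z$), hence a single value, which will be the continuous extension $\uu V(x)$. Throughout, $\pi(u,\uu y)=u$ denotes the projection onto the first coordinate and $Z_0$ the irreducible component of $Z$ containing $\uu{v}_0$.

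\emph{Step 1: confining the graph to one irreducible curve.} Since $\det[\partial_{\uu y}\uu F]\neq 0$ at $\uu{v}_0=(u_0,\uu V(u_0))$, the holomorphic implicit function theorem represents $Z$ near $\uu{v}_0$ as the graph of an analytic map, and uniqueness of that implicit solution forces this graph to coincide near $\uu{v}_0$ with $\Vb$; in particular $Z$ is a one-dimensional manifold near $\uu{v}_0$, whence $Z_0$ has pure dimension one (irreducible analytic sets being pure-dimensional; cf.~\cite{chirka}). I would then prove $\Vb\subseteq Z_0$ by an open--closed argument on the connected set $\OO$. Let $\OO''\subseteq\OO$ be the set of $u$ for which some neighbourhood $N$ of $(u,\uu V(u))$ in $\C^{q+1}$ satisfies $\Vb\cap N\subseteq Z_0$; it is open by definition and nonempty, since $\Vb$ coincides with $Z_0$ near $\uu{v}_0$. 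It is closed in $\OO$: if $u_k\to u_\infty\in\OO$ with $u_k\in\OO''$ then $(u_\infty,\uu V(u_\infty))\in Z_0$ as $Z_0$ is closed, and on a small connected piece of $\Vb$ around that point the set $\Vb\cap Z_0$ is analytic in $\Vb$ and not discrete (it contains the sequence $(u_k,\uu V(u_k))$ accumulating at $(u_\infty,\uu V(u_\infty))$), hence by the identity principle equals that whole piece, i.e. $u_\infty\in\OO''$. Thus $\OO''=\OO$, so $\Vb\subseteq Z_0$ and $\oo{\Vb}\subseteq Z_0$. Finally, since $\pi(\Vb)=\OO$ is two-real-dimensional, $Z_0$ is not contained in any vertical fibre $\{u\}\times\C^q$, so $Z_0\cap(\{x\}\times\C^q)$ is a proper analytic subset of the irreducible one-dimensional $Z_0$, hence discrete.

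\emph{Step 2: the topological squeeze and the conclusion.} Using local connectedness of $\OO$ at $x$ together with the bound $\|\uu V\|\le M$ on $U\cap\OO$, I would build a nested sequence of connected open sets $\OO_1\supseteq\OO_2\supseteq\cdots$ inside $\OO$, each a relative neighbourhood of $x$ in $\OO\cup\{x\}$ (so $\OO\cap\Ball(x,\delta_n)\subseteq\OO_n$ for some $\delta_n>0$) with diameters tending to $0$. For large $n$ the graph $\Vb_n:=\{(u,\uu V(u)):u\in\OO_n\}$ lies in a fixed compact box, so $\oo{\Vb_n}$ is nonempty and compact; it is connected, being the closure of the continuous image of the connected $\OO_n$; the $\oo{\Vb_n}$ decrease; and $\oo{\Vb_n}\subseteq Z_0$ since $\Vb_n\subseteq\Vb\subseteq Z_0$ with $Z_0$ closed. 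Hence $T:=\bigcap_n\oo{\Vb_n}$ is a nonempty compact connected subset of the discrete set $Z_0\cap(\{x\}\times\C^q)$, so $T=\{(x,\uu w)\}$ for a single $\uu w$, and $\uu F(x,\uu w)=\uu{0}$ since $(x,\uu w)\in Z_0\subseteq Z$. Now for any sequence $u_k\to x$ in $\OO$ and any $n$ we eventually have $u_k\in\OO_n$, so every limit point of $(u_k,\uu V(u_k))$ lies in all $\oo{\Vb_n}$, i.e. equals $(x,\uu w)$; being eventually confined to a fixed compact set, the sequence therefore converges to $(x,\uu w)$. As the limit is independent of the approach, $\uu V(u)\to\uu w$ as $u\to x$ within $\OO$, and setting $\uu V(x):=\uu w$ gives the asserted continuous extension, with $\uu F(x,\uu V(x))=\uu{0}$ by continuity of $\uu F$.

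I expect the main obstacle to be Step 1 --- rigorously confining $\Vb$ to a single one-dimensional irreducible component $Z_0$ and deducing discreteness of the vertical fibre $Z_0\cap(\{x\}\times\C^q)$ --- since this is precisely what upgrades the topological squeeze of Step 2 from a connected cluster set to a one-point cluster set. The remaining delicate point is the extraction of the nested connected neighbourhoods $\OO_n$ from local connectedness of $\OO$ at $x$, which is unproblematic for the domains actually used in the paper ($\OO=\Eaae$, with piecewise smooth boundary, so near each boundary point $\OO$ is a half-disc or a sector).
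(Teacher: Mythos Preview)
Your proof is correct and follows essentially the same strategy as the paper's: both confine $\Vb$ to a single one-dimensional irreducible component of the analytic set $\Ab=\uu F^{-1}(\uu 0)$, use that the fibre of this component over $x$ is zero-dimensional (hence discrete), and conclude since the cluster set of $\uu V$ at $x$ is connected. The only noteworthy variation is in the execution of Step~1: the paper shows $\oo{\Vb}\subseteq\oo{\Sb}$ by arguing that $\Vb\setminus\mathrm{reg}\,\Ab$ is contained in the proper analytic subset $\Vb\cap\Abs$ of the Riemann surface $\Vb$ (where $\Abs=\{\det[\partial_{\uu y}\uu F]=0\}$), whence $\Vb\cap\mathrm{reg}\,\Ab$ is arc-wise connected and lies in one connected component $\Sb$ of $\mathrm{reg}\,\Ab$, while you run a direct open--closed argument on $\OO$ using that $\Vb\cap Z_0$ is analytic in the one-dimensional manifold $\Vb$; in Step~2 the paper cites the cluster-set theorem of Collingwood--Lohwater directly, whereas you unpack it via nested connected neighbourhoods. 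Both routes are sound and rest on the same body of facts from \cite{chirka}.
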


Deferring the proof of Proposition \ref{prop:analytic} to the
end of this section, we next collect all properties 
needed for applying it in our setting. 
\begin{lem}\label{lem:extension}
Assuming $\s \not \equiv 0$, the mapping 
$u\mapsto\underline{V}(u)$ of Proposition 
\ref{uniqueprop} is injective on $\Eaae$
(and consequently, so is the map 
$z \mapsto \uu{Y}(z)=\uu{V}(z^{-\alpha})$).
Further, in this case 
$\Vb :=\{(u,\underline{V}(u))\colon u\in\Eaae\}$
is a one-dimensional complex manifold 
containing the point $(0,\uu{0})$ where 
$[\partial_{\uu{y}}\uu{F}]$ is the
identity matrix, and $\|\uu{V}(u)\|_2 \le K|u|$ 
for some finite constant $K =K(\sigma)$ and all $u \in \Eaae$. 
\end{lem}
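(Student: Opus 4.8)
The plan is to establish, in order, the three assertions of Lemma~\ref{lem:extension}: injectivity of $\uu{V}$ on $\Eaae$, the manifold property of $\Vb$ together with the computation of the Jacobian at $(0,\uu0)$, and the linear bound $\|\uu V(u)\|_2\le K|u|$. I would start with the Jacobian and the bound, since these are the easiest and feed into the proof of injectivity.

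\emph{The linear bound and the Jacobian at the origin.} Recall from Proposition~\ref{uniqueprop} that $\uu V(u)=\uu Y(z)$ with $z=u^{-1/\alpha}$ and that $|Y_r(z)|\le\kappa|z|^{-\alpha}=\kappa|u|$ for all $z\in\C^+$, i.e.\ for all $u\in\Eaa$; by continuity of the algebraic extension this persists on $\oEaae$, and in particular the bound $\|\uu V(u)\|_2\le K(\sigma)|u|$ holds on all of $\Eaae$ with $K=\sqrt q\,\kappa$. For the Jacobian: from \eqref{eq:Fdef}, $\partial_{y_s}F_r(u,\uu y)=\delta_{rs}-u\,a_{rs}g_\alpha'(y_s)$, so at $(u,\uu y)=(0,\uu 0)$ this is exactly $\delta_{rs}$, the $q\times q$ identity matrix, whose determinant is $1\ne 0$. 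Since $\uu V(0)=\uu 0$ (from $\uu F(0,\uu0)=\uu0$ in Proposition~\ref{uniqueprop}), the point $(0,\uu0)$ lies on $\Vb$ and has the stated Jacobian.

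\emph{Injectivity.} Suppose $\uu V(u_1)=\uu V(u_2)=:\uu y$ for $u_1,u_2\in\Eaae$. From $F_r(u_i,\uu y)=0$ we get $y_r=u_i\sum_s a_{rs}g_\alpha(y_s)$ for $i=1,2$ and every $r$, hence $(u_1-u_2)\sum_s a_{rs}g_\alpha(y_s)=0$ for all $r$. If $u_1\ne u_2$ this forces $\sum_s a_{rs}g_\alpha(y_s)=0$ for every $r$, and then $y_r=0$ for every $r$, i.e.\ $\uu y=\uu0$; but then (using $\sigma\not\equiv0$, so some $a_{rs}\ne0$) the relation $0=\sum_s a_{rs}g_\alpha(0)$ would require $g_\alpha(0)=\int_0^\infty t^{\alpha/2-1}e^{-t}\,dt=\Gamma(\alpha/2)=0$, a contradiction. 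Hence $u_1=u_2$, and $\uu V$ is injective on $\Eaae$; composing with the injective map $z\mapsto z^{-\alpha}$ from $\C^+$ onto $\Eaa$ gives injectivity of $z\mapsto\uu Y(z)$. I expect this to be the step requiring the most care, mainly in confirming that the only solution of the homogeneous system is $\uu y=\uu0$ and in invoking $\sigma\not\equiv0$ correctly; a variant argument, if the above needs strengthening, is to note that $u\mapsto\uu V(u)$ is nonconstant (since $\uu V(0)=\uu0$ but $\uu V\not\equiv\uu0$ as $g_\alpha(0)\ne0$) and analytic, so the implicit relation $u=y_r/\sum_s a_{rs}g_\alpha(y_s)$ — valid wherever the denominator is nonzero along $\Vb$ — recovers $u$ from $\uu V(u)$, giving a left inverse.

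\emph{Manifold property.} Given injectivity, $\uu V:\Eaae\to\C^q$ is an analytic injection with nowhere-degenerate ``vertical'' derivative in the sense that its graph is locally cut out by $\uu F$: indeed for each $u_0\in\Eaae$ the Jacobian $\partial_{\uu y}\uu F(u_0,\uu V(u_0))=\mathrm{Id}-u_0[a_{rs}g_\alpha'(V_s(u_0))]$ need not be invertible everywhere, so instead I argue directly from the graph parametrization: $\Phi(u):=(u,\uu V(u))$ is a holomorphic embedding of the open connected $\Eaae\subseteq\C$ into $\C^{q+1}$ with injective differential $\Phi'(u)=(1,\uu V'(u))\ne 0$ everywhere, and $\Phi$ is a homeomorphism onto its image with the subspace topology (its first coordinate is a continuous inverse). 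Hence $\Vb=\Phi(\Eaae)$ is a one-dimensional complex submanifold of $\C^{q+1}$ in the sense of \cite[Section A.2.2]{chirka}: near any point $(u_0,\uu V(u_0))$ one may take the defining map $\uu f(u,\uu y)=\uu y-\uu V(u)$, holomorphic with Jacobian of rank $q$ (the identity in the $\uu y$ variables). This, together with the already-verified facts that $(0,\uu0)\in\Vb$ and $[\partial_{\uu y}\uu F](0,\uu0)=\mathrm{Id}$ and the bound $\|\uu V(u)\|_2\le K|u|$, completes the proof. \qed
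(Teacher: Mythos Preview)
Your injectivity argument and the Jacobian computation at $(0,\uu0)$ match the paper's. Your manifold argument is genuinely different and in fact simpler: the paper first shows $\uu V'(u)\ne\uu0$ (by a contradiction argument parallel to injectivity), then locally inverts one coordinate $V_q$ to build a defining map, whereas you observe directly that the graph of any holomorphic function on an open subset of $\C$ is a one-dimensional submanifold via $\uu f(u,\uu y)=\uu y-\uu V(u)$, whose $\uu y$-Jacobian is the identity. Both are correct; yours avoids the intermediate step about $\uu V'$.

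There is, however, a gap in your linear bound. You establish $|V_r(u)|\le\kappa|u|$ on $\Eaa$ from Proposition~\ref{uniqueprop} and then write ``by continuity of the algebraic extension this persists on $\oEaae$.'' This is circular: the continuous algebraic extension of $\uu V$ to $\oEaae$ is obtained \emph{later} (via Proposition~\ref{prop:analytic}), with the present lemma as input. Moreover, even granting analyticity of $\uu V$ on $\Eaae$ (which you do have from Proposition~\ref{uniqueprop}), continuity alone does not propagate the bound from $\Eaa$ to $\Ball(0,\varepsilon)\setminus\oo{\Eaa}$, since for $\alpha<2$ the set $\Eaa$ is not dense in $\Eaae$. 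The fix is what the paper does: Proposition~\ref{uniqueprop} gives $\|\uu V(u)\|<\delta$ on $\Ball(0,\varepsilon)$ (this is part of the implicit-function-theorem output), so $g_\alpha(V_s(\cdot))$ is bounded on $\Ball(0,\varepsilon)$ as well as on $\Eaa$ (via $\KK_\alpha$), and then the defining equation $V_r(u)=u\sum_s a_{rs}g_\alpha(V_s(u))$ yields $|V_r(u)|\le K|u|$ on all of $\Eaae$.
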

\begin{proof} First note that if 
$\underline{F}(u,\underline{y})=\underline{F}(\wt{u},
\underline{y})=\uu{0}$ for some $\uu{y} \ne \uu{0}$
then by \eqref{eq:Fdef} necessarily $u=\wt{u}$.
Further, by excluding $\s \equiv 0$ we made sure
that if $ \underline{F}(u,\underline{0})=\underline{0}$
then $u=0$ (since $g_\a(0)>0$ and $\sum_s a_{rs} \ne 0$ 
for some $r$). In particular, $u\mapsto\underline{V}(u)$ is injective. 
By the same reasoning, $\uu{V}'(u) \neq \uu{0}$. 
Indeed, \eqref{poiu} amounts to  
\begin{equation}\label{eq:am-v1}
V_r(u) - u \sum_{s=1}^q a_{rs} g_\alpha(V_s(u)) = 0 
\qquad \qquad 1 \le r \le q
\end{equation}
and differentiating this identity in $u$, we see that 
if $\uu{V}'(u)=\uu{0}$ then necessarily
\begin{equation}\label{eq:am-v2}
\sum_{s=1}^q a_{rs}g_\alpha(V_s(u))=0 \qquad \qquad 1 \le r \le q .
\end{equation}
Clearly, if (\ref{eq:am-v2}) holds then it follows
from (\ref{eq:am-v1}) that $\underline{V}(u)=\underline{0}$ 
and as we have already seen, for 
$\s \not\equiv 0$ it is then impossible for (\ref{eq:am-v2}) to hold.

Next we show that $\Vb \subseteq\C\times \C^q$ is a 
complex one-dimensional manifold, by finding for 
any point $u\in\Eaae$, a
suitable holomorphic mapping from a neighborhood $\Ub$
of $\uu{v}=(u,\uu{V}(u))$ in $\C^{q+1}$ 
to $\C^q$ having a Jacobian of rank $q$ at $\uu{v}$. 
Indeed, as it is not possible to have $V_1'(u)=\cdots=V'_q(u)=0$,   
we may assume without loss of generality that, for a given $u$, 
$V'_q(u)\neq 0$. Then, by the inverse function theorem 
there exists a neighborhood $U \subseteq \Eaae$ of $u$ 
with $V_q(\cdot)$ having an analytic inverse on the 
neighborhood $V_q(U)$ of $V_q(u)$. Thus, on 
the neighborhood 
$\Ub= U\times\C^{q-1}\times V_q(U)$ of $\uu{v}$ in $\C^{q+1}$
we have the holomorphic mapping $\uu{f}: \Ub \mapsto \C^q$ where 
$f_r(w,\uu{y})=y_r-V_r(V_q^{-1}(y_q))$ for $1\le r\le q-1$ and  
$f_q(w,\uu{y})=V_q(w)-y_q$. Clearly, $\uu{f}(w,\uu{y})=\uu{0}$ 
for $(w,\uu{y}) \in \Ub$ if and only if $\uu{y}=\uu{V}(w)$ and
$w \in U$, hence
$\{(w,\uu{y}) \in \Ub : \uu{f}(w,\uu{y}) = \uu{0}\}$ is 
precisely $\Vb \cap \Ub$. Further, since
$\partial_{y_r} f_s = \delta_{rs}$ for $1 \le r \le q-1$ 
and $\partial_{w} f_s = V_q'(w) \delta_{qs}$, the 
Jacobian determinant at $\uu{v}$ of $\uu{f}(\cdot,y_q)$ 
with $y_q$ fixed is $V_q'(u) \ne 0$. We conclude that  
rank$_{\uu{v}}(\uu{f})=q$ and    
$\Vb$ is a one dimensional complex manifold, as claimed.

Finally, while proving Proposition \ref{uniqueprop}
we found that 
${\rm det} [\partial_{\uu{y}}\uu{F}](0,\uu{0})=1$,
that $\uu{V}(u) \in (\KK_\a)^q$ 
for all $u \in \Eaa$ and that $\uu{V}(\cdot)$ 
is uniformly bounded on $\Ball (0,\varepsilon)$. 
With $g_\alpha(\cdot)$ uniformly bounded on $\KK_\alpha$
(and on compacts), it follows from \eqref{eq:am-v1} 
that $\|\uu{V}(u)\|_2 \le K|u|$ for some finite 
constant $K
=K(\sigma)$ and all $u \in \Eaae$. 
\end{proof}

\begin{rmk} The assumptions of Proposition \ref{prop:analytic}
do not yield a unique extension of $\underline{V}$ 
{\em around} boundary points of $\OO$. That is,
the extension provided there may well be non-analytic.
For example, the Cauchy-Stieltjes transform $y=G_2(z)$ 
of the semi-circle law $\mu_2$ at $z=u^{-1}$ is specified in 
terms of zeros of the holomorphic function 
$F(u,y) =y - u (y^2+1)$ on $\C^2$. 
It is not hard to check that for any positive 
$\varepsilon<1/2$ the unique analytic solution 
$y=V(u)$ of $F(u,y)=0$ on $\Ea_{1,\varepsilon}$ 
is then $V(u)=(1-\sqrt{1-4u^2})/(2u)$ for $u \ne 0$ and
$V(0)=0$. Following the arguments of Lemma \ref{lem:extension}, 
one finds that this injective function is uniformly bounded 
in the neighborhood of any boundary point 
of $\Ea_{1,\varepsilon}$ and its graph $\Vb$ is a one-dimensional 
manifold containing the origin (where 
$\partial F/\partial y = 1$). However, 
$V(x)$ does not have an analytic 
extension at $x=1/2$ as the corresponding density 
$\rho_2(t)$ is not real-analytic at $t = \pm 2$.
\end{rmk}

For the convenience of the reader, 
we summarize, following the reference \cite{chirka},
the terminology and results about analytic functions 
of several complex variables which we use in 
proving Proposition \ref{prop:analytic}.

A (local) analytic set is a subset $\Ab$ 
of a complex manifold $\Mb$ such that for any $\uu{a} \in \Ab$ 
there exists a neighborhood $\Ub$ of $\uu{a}$ in $\Mb$ 
and a holomorphic mapping $\uu{f} : \Ub \mapsto \C^n$
such that $\Ab \cap \Ub=\{\uu{z} \in \Ub \colon
\uu{f}(\uu{z}) = \uu{0} \}$ (in contrast with a manifold,
there is no condition on the rank of the Jacobian of the mapping $\uu{f}$).  
We call $\Ab \subseteq \Mb$ an analytic subset of the complex manifold 
$\Mb$ if this further applies at all $\uu{a} \in \Mb$ (and not only at
the points $\uu{a}$ in $\Ab$), and say that $\Ab$ is a {\em proper} 
analytic subset of $\Mb$ if $\Ab \neq \Mb$.
In particular, any embedded complex 
manifold is an analytic set (of $\C^q$), but, unless it 
is closed in $\C^q$, it cannot be an analytic subset of $\C^q$.
For example, $\Hb=\{ \uu{z} \in\C^q\colon \|\uu{z}\|_2 <1, z_1=0\}$ 
is a manifold (of dimension $q-1$), a (local) analytic set in $\C^q$, 
but not an analytic subset of $\C^q$. However, as observed in 
\cite[Section 1.2.1]{chirka}, every (local) analytic set on a 
complex manifold $\Mb$ is an analytic subset of a certain neighborhood of 
$\Mb$ (for example, $\Hb$  {\em is} an analytic subset of 
the open unit ball in $\C^q$).

A point of an analytic set $\Ab$ (on $\C^q$) 
is called {\em regular} if it has a neighborhood $\Ub$ 
(in $\C^q$) so that $\Ab \cap \Ub$ is a manifold in $\C^q$.
Clearly, the set ${\rm reg} \Ab$ of regular points of an
analytic set $\Ab$ is a union of manifolds (alternatively, 
an analytic set is a manifold around each 
of its regular points). Topologically,  
most points of an analytic set are regular. That is, 
for an arbitrary analytic set $\Ab$ the set 
${\rm reg} \Ab$ of regular points 
is everywhere dense in $\Ab$
(c.f. \cite[Section 1.2.3]{chirka}).
Thus, the dimension ${\rm dim}_{\uu{a}} \Ab$ of $\Ab$ 
{\em at a point} $\uu{a}\in \Ab$ is defined as the 
dimension of the manifold around $\uu{a}$ if 
$\uu{a} \in$reg $\Ab$ and in general by
$$
{\rm dim}_{\uu{a}} \Ab =\limsup_{\uu{z}\to \uu{a},\ \uu{z} \in{\rm reg}\Ab}
{\rm dim}_{\uu{z}} \Ab.
$$ 
The dimension of the analytic set $\Ab$, denoted ${\rm dim} \Ab$
is then the largest such number when $\uu{a}$ runs through $\Ab$
and an analytic set $\Ab$ is called $p$-dimensional if 
${\rm dim} \Ab = p$ (see \cite[Section 1.2.4]{chirka}).

An essential ingredient of our proof is the notion of
{\em irreducibility} and of irreducible components for 
analytic sets \cite[Section 1.5.3]{chirka}. An analytic subset $\Ab$ of
a complex manifold $\Mb$ is reducible in $\Mb$ 
if there exist two analytic
subsets $\Ab_1,\Ab_2$ of $\Mb$ 
so that $\Ab=\Ab_1\cup \Ab_2$ and $\Ab_1\neq \Ab
\neq \Ab_2$. Otherwise $\Ab$ is called irreducible.
For example,  $\Ab = \{\uu{z} \in\C^3\colon z_1z_2= z_1z_3=0\}$ is 
a reducible set, being the union of 
$\Ab_1= \{\uu{z} \in\C^3\colon z_2=z_3=0\}$ 
(a one dimensional manifold), and
$\Ab_2= \{ \uu{z}\in\C^3\colon z_1=0\}$
(a two dimensional manifold). 
An irreducible analytic subset $\Ab'$ of an analytic set $\Ab$ 
is called an {\em irreducible component} of $\Ab$ 
if every analytic subset $\Ab''$ of $\Ab$
such that $\Ab'\subsetneq \Ab''$ is reducible. 
It is known 
\cite[Theorem, Section 1.5.4]{chirka} that 
any analytic subset $\Ab$ of a complex manifold $\Mb$
has a unique decomposition into countably (or finitely) 
many irreducible components $\oo{\Sb}_j$, which are
the closures (in $\Mb$) of the partition $\{\Sb_j\}$ 
of ${\rm reg} \Ab$ into disjoint connected components.
Further, ${\rm dim} \oo{\Sb}_j = {\rm dim} \Sb_j$ 
\cite[Theorem, Section 1.5.1]{chirka} and
by definition of regular points 
each connected component $\Sb_j$ is a manifold 
(in case $\Mb=\C^q$). 

The importance of 
irreducibility for us resides in the following 'uniqueness' result: 
if $\Ab,\Ab'$ are analytic subsets of
a complex manifold, $\Ab$ is irreducible and $\Ab\not\subseteq \Ab'$, then
${\rm dim} \Ab\cap \Ab' < {\rm dim} \Ab$
\cite[Section 1.5.3, Corollary 1]{chirka}. 

Topological properties simplify considerably when  
a set $\Ab$ is contained in a proper analytic subset 
of a {\em connected} complex manifold $\Mb$. Indeed, in 
this context $\Mb \setminus \Ab$ is arc-wise connected 
and in case of a one dimensional manifold $\Mb$ 
(that is, a {\em Riemann surface}), we further have 
that $\Ab$ is locally finite i.e. $\Ab \cap \Db$ 
is a finite set for any compact $\Db \subseteq \Mb$  
(see \cite[Section 1.2.2]{chirka}).

\begin{proof}[Proof of Proposition \ref{prop:analytic}] 
Clearly, $\Vb$ is a connected set 
(being the graph of a continuous function on 
the connected set $\OO$). Further, 
by our assumptions, the connected one-dimensional 
complex manifold $\Vb$ is contained in the analytic subset 
$$
\Ab=\{(u,\uu{y})\in\C\times\C^q\colon
\uu{F}(u,\uu{y})=\uu{0}\},
$$
of $\C \times \C^q$ given by the zeros of the holomorphic 
mapping $\uu{F}$. 

We proceed to show the crux of our argument, that the 
closure $\oo{\Vb}$ of $\Vb$ (in $\C^{q+1}$) 
is part of a one-dimensional irreducible component of $\Ab$. 
To this end, consider the analytic subset
$$
\Abs  = \{(u,\uu{y})\in\C\times\C^q\colon
{\rm det}[\partial_{\uu{y}}\uu{F}](u,\uu{y})=0\}
$$
of $\C\times\C^q$. By definition, $\Vb \cap \Abs$ is an
analytic subset of $\Vb$, and it is a
proper subset, for we know that 
$\uu{v}_0 = (u_0,\uu{V}(u_0)) \in \Vb \setminus \Abs$. 
Further, by the implicit function theorem, the analytic subset $\Ab$ 
is regular at any $\uu{a} \in \Ab \setminus \Abs$, so 
$\Vb \setminus$ reg$\Ab$ is contained in the proper 
analytic subset $\Vb \cap \Abs$ of the Riemann surface $\Vb$. 
Consequently, by \cite[Proposition 2, Section 1.2.2]{chirka}, 
we deduce that $\Vb$ is an `almost regular' part of $\Ab$. 
That is, $\Vb \setminus$ reg$\Ab$ 
is locally finite and consequently the closure 
$\oo{\Vb}$ of $\Vb$ in $\C^{q+1}$ is the same as 
the closure of $\Vb \cap$reg $\Ab$. 
%
Further, by \cite[Proposition 3, Section 1.2.2]{chirka},
$\Vb \cap$reg $\Ab$ is arc-wise connected, hence included 
in {\em one} connected component $\Sb$ of reg$\Ab$,
with $\oo{\Vb}=\oo{\Vb \cap {\rm reg} \Ab}$ thus  
contained in the closure $\oo{\Sb}$ of $\Sb$ (in 
$\C^{q+1}$).

Recall that by definition the connected component 
$\Sb$ of reg$\Ab$ is a manifold and since 
$\uu{v}_0$ is in $\Vb \cap$ reg $\Ab$, 
we have that $\Sb$ contains the manifold 
$\Vb \cap \Ub$ for some neighborhood $\Ub$ 
of $\uu{v}_0$ (in $\C^{q+1}$). The connected
manifold $\Vb \cap \Ub$ has an accumulation 
point in both $\Vb$ and $\Sb$, so all three
have the same dimension, that is,
${\rm dim} \Sb = {\rm dim} \Vb = 1$ (see \cite[Section 1.2.2]{chirka}).
As shown in \cite[Theorem, Section 1.5.4]{chirka}, 
the irreducible component of $\Ab$ containing
$\Sb$ is its closure $\oo{\Sb}$, which  
by the definition of an irreducible component
is an analytic subset of $\C^{q+1}$ and
further by \cite[Theorem, Section 1.5.1]{chirka}
${\rm dim} \oo{\Sb} = {\rm dim} \Sb =1$. 

We claim that if $\uu{V}(u)$ is uniformly bounded on 
$\OO \cap U$ for some neighborhood $U$ (in $\C$) of 
a boundary point $x \in \oo{\OO}$ 
where $\OO$ is locally connected, 
then the existence of the one-dimensional irreducible 
analytic subset $\oo{\Sb}$ of $\C^{q+1}$ insures that 
$\uu{V}(\cdot)$ extends continuously at $x$ 
such that $\uu{F}(x,\uu{V}(x))=\uu{0}$.
Indeed, since $\uu{V}$ is uniformly bounded on
$\OO\cap U$, by the continuity of $\uu{V}(\cdot)$ on 
$\OO$ the cluster set $Cl(x)
$ 
of all limit points of $\{ \uu{V}(u) : u \in \OO\}$ 
as $u \to x$, is a non-empty, compact, {\em connected} 
subset of $\C^q$ (see the proof
given in \cite[Theorem 1.1]{collin} for $q=1$). 
Clearly, $\{x\}\times Cl(x)$ is contained in the analytic subset 
$\Ab(x) = \{ (u,\uu{y}) \in \Ab : u = x \}$ of $\C \times \C^q$ 
as well as in the closure $\oo{\Vb}$ of $\Vb$ (in $\C^{q+1}$).
With $\oo{\Vb} \subseteq \oo{\Sb}$, we thus deduce that
$\{x\}\times Cl(x) \subseteq \Ab(x) \cap \oo{\Sb}$.

Recall that $\oo{\Sb}$ is a one-dimensional, irreducible 
analytic subset of $\C^{q+1}$. Since 
$\oo{\Sb} \not\subseteq \Ab(x)$ (as $\uu{v}_0 \in \Vb$ and
$u_0$ is not a boundary point of $\OO$), by
\cite[Corollary 1, Section 1.5.3]{chirka} 
we have that ${\rm dim} \Ab(x)\cap\oo{\Sb} =0$.
Thus, $\Ab(x) \cap \oo{\Sb}$ is a discrete (analytic) set, so 
its connected subset $\{x\}\times Cl(x)$ must be a single point,
i.e. $\uu{V}$ extends continuously at $x$. 
Moreover, $\Ab$ is a a closed subset of $\C \times \C^q$ 
(by continuity of $\uu{F}$), hence the extension $\uu{V}(x)$ 
of $\uu{V}$ satisfies $\uu{F}(x,\uu{V}(x))=\uu{0}$, as claimed.
\end{proof}

\subsection{Limiting spectral measures: proof of 
Theorem \ref{theo-limitpoint-uniq-amir}}

Fixing $z \in \C^+$ let $(\mu^z_s, 1 \leq s \leq q)$ 
denote some limit point of the 
compactly supported $(\E[L_{N,s}^{z,\kappa}], 1 \leq s \leq q)$.
Then, on the corresponding subsequence,
$X_{N,r}(z)$ converges for $r=1,\ldots,q$
to $\int x^{\frac{\alpha}{2}} d\mu_r^z$ 
which by Propositions \ref{projectionoflimitpoint}
and \ref{uniqueprop} thus coincides with the 
unique analytic solution $X_r(z)$ of \eqref{cocottes}  
bounded by $(\Im(z))^{-\frac{\a}{2}}$.
By \eqref{limitpointeq} 
(for $f(x)=x$ bounded and continuous on $\Db (z)$), 
the identity $z^{-1} = -i \int_0^\infty e^{i t z} dt$, and
Fubini's theorem, we deduce that for each $r \in \{1,\ldots, q\}$,
\begin{eqnarray}\label{eq:newam2}
\int xd\mu^z_r &=&
- i \int_0^\infty e^{itz}\prod_{s=1}^q \Big(\int  
\exp\{ -it \sigma_{rs}^2  \Delta_s^{\frac{2}{\alpha}} x_s \}
dP^{\mu^z_s}(x_s) \, \Big) dt\nonumber \\
&=&
-i \int_0^\infty e^{itz} \exp\{-(it)^{\frac{\alpha}{2}} \wh{X}_r (z) \} dt 
= z^{-1} g_{\alpha,2} (Y_r(z)) \,,
\end{eqnarray}
where we get the latter equality from \eqref{cocott} and the 
definition \eqref{eq:yrdef} of $\wh{X}_r(z)$, followed by the application of 
\eqref{eq:gziden} with $\beta=2$.
In particular, 
by Proposition \ref{uniqueprop}
$\int x d\mu^z_r$ are uniquely determined (for all $r$ and $z\in\C^+$), 
hence $\E[L_{N,s}^{z,\kappa}(x)]$ converges
as $N \to \infty$ to the right side of \eqref{eq:newam2}.

Next, by Lemma \ref{tight} the sequence $\E[\mun_{\bA_N^{\kappa,\s}}]$
is tight for the topology of weak convergence. Further, recall that
for any $z \in \C^+$ and all $N$,
$$
\int \frac{1}{z-x} d\mun_{\bA_N^{\kappa,\sigma}}(x)
=\sum_{s=1}^q\De_{N,s} L_{N,s}^{z,\kappa}(x).
$$
Hence, any limit point $\mu^\sigma$ of $\E[\mun_{\bA_N^{\kappa,\s}}]$
is such that for each $z \in \C^+$,
\begin{equation}\label{eq:newam1}
\int \frac{1}{z-x} d\mu^{\sigma}(x)
=\sum_{s=1}^q\De_s \int x d\mu^z_s(x) \,.
\end{equation}
Recall that $g_{\alpha,2}=h_\alpha$, so 
combining \eqref{eq:newam2} and \eqref{eq:newam1} we thus 
arrive at the stated formula \eqref{eqG} for the values of the
Cauchy-Stieltjes transform $G_{\alpha,\sigma}(z)$ 
of the probability measure $\mu^{\sigma}$ 
on the real line, at all $z\in \C^+$. Since 
$h_\a$ is uniformly bounded on the closed
set $\KK_\a$ (see Lemma \ref{boundg}), and 
$Y_s(z) \in \KK_\a$ for all $z \in \C^+$
and $1 \le s \le q$,
 we deduce from \eqref{eqG} 
that $G_{\a,\s}(z)$ is uniformly 
bounded on $\C^+ \cap \Ball (0,\delta)^c$ for each $\delta>0$. 
By the Stieltjes-Perron inversion formula, it follows that
the density $\rho^\s$ of the probability measure $\mu^\s$ 
with respect to Lebesgue measure on $\R \setminus \{0\}$
is bounded on $(-\delta,\delta)^c$ for any $\delta>0$.

With $G_{\alpha,\sigma}(z)$ uniquely 
determined, we conclude that so is the weak limit 
$\mu^\sigma$ of $\E[\mun_{\bA_N^{\kappa,\sigma}}]$.
Further, applying Lemma \ref{concentration}
for $f(x)=x$ and considering the union bound 
over $1 \le s \le q$, we find that, 
with $\e= 1 - \kappa(2-\alpha) >0$, 
for some $c(z)$ finite on $\C^+$, 
any $z \in \C^+$, $\de>0$ and $N \in \N$,
$$
\P\Big(\big| 
\int \frac{1}{z-x} d\mun_{\bA_N^{\kappa,\sigma}}(x)
- 
\E [\int \frac{1}{z-x} d\mun_{\bA_N^{\kappa,\sigma}}(x)]
\big|\ge \d\Big)
\le \frac{q c(z)}{\d^2} N^{-\e} \,.
$$
Consequently, setting $\phi(n)=[n^\gamma]$ for $\gamma=2/\e$, 
by the Borel-Cantelli lemma, with probability one, as $n \to \infty$, 
$$
G_n(z) := \int \frac{1}{z-x} d\mun_{\bA_{\phi(n)}^{\kappa,\sigma}}(x) \to 
G_{\a,\s}(z) \,. 
$$
Since $G_n(z) \le (\Im(z))^{-1}$ for all $n$ and
$z \in \C^+$, 
applying this for a countable collection $z_k$ with 
a cluster point in $\C^+$ we deduce by Vitali's convergence theorem
that with probability one, $G_n(z) \to G_{\a,\s}(z)$ for all $z \in \C^+$.
Such convergence of the 
Cauchy-Stieltjes transforms 
implies of course
that $\mun_{\bA_{\phi(n)}^{\kappa,\sigma}}$ converges weakly to 
$\mu^\s$ and by \eqref{trunc2} we deduce after yet another 
application of the Borel-Cantelli lemma, that with probability one 
$\mun_{\bA_{\phi(n)}^\sigma}$ converges weakly to $\mu^\s$. 
Finally, since $\phi(n-1)/\phi(n) \to 1$ we have from Lemma 
\ref{lem:interp} that the same weak convergence to $\mu^\s$ 
holds for $\mun_{\bA^\s_N}$.

With $h_\a(\oo{y})=\oo{h_\a}(y)$, 
combining the identities $Y_r(-\oo{z})=\oo{Y_r}(z)$ of 
Proposition \ref{uniqueprop}
with the formula \eqref{eqG}
for the Cauchy-Stieltjes transform 
$G_{\alpha,\sigma}$ of the probability measure $\mu^\s$ on
$\R$ we find that  
$G_{\alpha,\sigma}(-\oo{z}) 
= -\oo{G}_{\alpha,\sigma}(z)
=-G_{\alpha,\sigma}(\oo{z})$ for all $z \in \C^+$, hence
necessarily $\mu^\s(\cdot)=\mu^\s(-\cdot)$ is symmetric about 
zero. Further, as shown in Proposition \ref{uniqueprop}, 
$z^\alpha \uu{Y} (z)$ is uniformly bounded and
extends analytically through the subset $(R,\infty)$, where 
$\uu{Y}(z)=\uu{V}(z^{-\alpha}) \in ({\mathcal K}_\alpha)^q$
is the unique analytic solution of \eqref{systemeqY} on $z\in \C^+$ 
that tend to zero as $|z| \to \infty$ (and as shown in Lemma 
\ref{lem:extension}
$z\mapsto \uu{Y}(z)$ is injective 
when $\sigma\not\equiv 0$). 
If $\s \equiv 0$ then 
$\uu{V}(u)=\uu{0}$ is analytic on $\C$.
Turning to $\s \not\equiv 0$, in view of Lemma 
\ref{lem:extension} the function 
$\uu{V}$ is uniformly bounded on $\Eaae \cap \Db$ for 
any compact $\Db \subseteq \C$. Thus, combining 
Lemma \ref{lem:extension} with Proposition \ref{prop:analytic} 
we find that $\uu{V}(u)$ has a continuous, algebraic extension to
$(0,\infty)$.
As $Y_r(-\overline{z})=\overline{Y_r}(z)$,
this yields the continuous, algebraic extension of $\uu{Y}(z)$ to 
$\reals \setminus \{0\}$, analytic on $(R,\infty)$, 
from which we get 
by \eqref{eqG} and the analyticity of $h_\alpha(\cdot)$
the corresponding continuous/algebraic/analytic extension 
of $G_{\alpha,\sigma}(z)$. 
Recall Plemelj formula, that for $x \ne 0$, 
the limit as $\epsilon \downarrow 0$ 
of $-\pi^{-1} \Im(G_{\alpha,\s} (x+i\epsilon))$ is then 
precisely the continuous density $\rho^\s(x)$ 
of $\mu^\s$ with respect to Lebesgue measure on $\R \setminus \{0\}$,
and $\rho^\s(x)$ is real-analytic on $(R,\infty)$.

\section{Proof of Theorem \ref{weakening}}\label{sec:gen}

We start with the following consequence of 
Proposition \ref{prop-dense}
and Theorem \ref{theo-limitpoint-uniq-amir}.
\begin{cor}\label{cor-sym}
For any  $\sigma \in \FF_\alpha$, the probability 
measures $\E[\mun_{\bA_N^\sigma}]$ converge weakly towards 
some symmetric probability measure
$\mu^{\sigma}$.
\end{cor}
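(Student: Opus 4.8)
\textbf{Proof proposal for Corollary \ref{cor-sym}.} The plan is to feed the piecewise-constant case into the density argument of Proposition \ref{prop-dense}. First I would invoke the definition of $\FF_\alpha$: since $\sigma \in \FF_\alpha$, there is a sequence $\sigma_p \in \PC$ with $\sigma_p \to \sigma$ in $L^2_\star([0,1]^2)$ and $\| \, |\sigma_p|^\alpha - |\sigma|^\alpha \| \to 0$ as $p \to \infty$. In particular $\sigma_p$ converges to $\sigma$ in $L^2_\star([0,1]^2)$, which is the hypothesis needed to run Proposition \ref{prop-dense}.

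Next I would verify assumption \eqref{hyp1} for this sequence. By Theorem \ref{theo-limitpoint-uniq-amir}, for each fixed $p$ the random measures $\mun_{\bA_N^{\sigma_p}}$ converge weakly, with probability one, to the non-random symmetric probability measure $\mu^{\sigma_p}$. Applying this to an arbitrary bounded continuous test function $f$, the sequence $\int f \, d\mun_{\bA_N^{\sigma_p}}$ is bounded by $\|f\|_\infty$ and converges almost surely to $\int f \, d\mu^{\sigma_p}$, so by bounded convergence $\E[\int f \, d\mun_{\bA_N^{\sigma_p}}] \to \int f \, d\mu^{\sigma_p}$. Hence $\E[\mun_{\bA_N^{\sigma_p}}]$ converges weakly to $\mu^{\sigma_p}$, which is exactly \eqref{hyp1}.

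Now Proposition \ref{prop-dense} applies verbatim and yields that $\mu^{\sigma_p}$ converges weakly, as $p \to \infty$, to some Borel probability measure $\mu^\sigma$, and that $\E[\mun_{\bA_N^\sigma}]$ converges weakly to this same $\mu^\sigma$ as $N \to \infty$. Finally, since each $\mu^{\sigma_p}$ is symmetric about zero, for every bounded continuous $f$ we have $\int f(x)\, d\mu^{\sigma_p}(x) = \int f(-x)\, d\mu^{\sigma_p}(x)$; passing to the weak limit $p \to \infty$ (and noting $x \mapsto f(-x)$ is again bounded continuous) gives $\int f(x)\, d\mu^{\sigma}(x) = \int f(-x)\, d\mu^{\sigma}(x)$, so $\mu^\sigma$ is symmetric as claimed.

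This argument is essentially a bookkeeping combination of two results already in hand, so there is no serious obstacle; the only point requiring a word of care is the passage from the almost-sure weak convergence in Theorem \ref{theo-limitpoint-uniq-amir} to weak convergence of the expectations, which is handled by the bounded convergence theorem since test functions and the measures involved are uniformly bounded.
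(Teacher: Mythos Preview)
Your proof is correct and follows essentially the same route as the paper: approximate $\sigma$ by $\sigma_p\in\PC$, use Theorem \ref{theo-limitpoint-uniq-amir} to verify hypothesis \eqref{hyp1}, apply Proposition \ref{prop-dense}, and pass symmetry to the limit. The only difference is that you spell out the bounded-convergence step from almost-sure to expected-measure convergence and the preservation of symmetry under weak limits, both of which the paper leaves implicit.
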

\begin{proof} We approximate $\sigma$
in $L^2_\star([0,1]^2)$ 
by a sequence of piecewise constant functions $\sigma_p$.
Applying Theorem \ref{theo-limitpoint-uniq-amir}
for $\sigma=\sigma_p$ we deduce that
hypothesis \eqref{hyp1} holds. Hence,
by Proposition \ref{prop-dense}
$\E[\mun_{\bA_N^\sigma}]$ converges weakly towards 
the limit $\mu^\sigma$ 
of the corresponding measures $\mu^{\sigma_p}$. We have
seen already that $\mu^{\sigma_p}$ are symmetric 
measures, hence so is their limit $\mu^\s$.
\end{proof}

Fixing $\s \in \FF_\alpha$ we proceed to 
characterize the limiting measure $\mu^\s$.
To this end, recall that $k_\s :=  \| \, |\s|^\alpha \|$ 
is finite and fix a sequence $\s_p\in\PC$ 
that converges to $\s$ in $L^2_\star$, 
satisfying \eqref{domaine} and such that 
$\sup_{p\in\N} \| \, |\s_p|^\alpha \| \leq 2 k_\s$.
For each 
$p \in \N$ let $0=b_0^p< b_1^p < \cdots < b^p_{q(\s_p)} =1$ 
denote the finite partition of $[0,1]$ 
induced by $\s_p$ and per $z \in \C^+$ consider the piecewise constant
function $Y^{\sigma_p}_.(z) : (0,1] \to \KK_\alpha$
such that 
$$
Y^{\sigma_p}_x(z)=Y_s(z)\mbox{ for } x\in (b_{s-1}^p,b_s^p]
\mbox{ and } s=1,\ldots,q(\s_p) \,,
$$
where $Y_s(z)\in \KK_\alpha$ is the unique
collection of (analytic) functions of $z \in \C^+$ 
that satisfy \eqref{systemeqY} for the $q \ts q$ matrix 
of entries $\s_{rs}:=\s_p(b_r^p,b_s^p)$, as in 
Theorem \ref{theo-limitpoint-uniq-amir}. This way 
\eqref{systeqlim} holds for $\s=\s_p$ and each $p \in \N$ 
(being precisely \eqref{systemeqY}). 

We next show the existence of $R=R(\s)$ finite
such that if $|z| \geq R$ then $(Y^{\sigma_p}_.(z), p\in\N)$
is a Cauchy sequence for the $L^\infty$-norm.
To this end, it is convenient to 
view \eqref{systeqlim} (at each $z \in \C^+$)
as the fixed point equation 
in $L^\infty((0,1];{\mathcal K}_\alpha)$ 
\begin{equation}\label{systeqalt}
Y^{\sigma}_. = F_z(\s,Y^\s_.), \quad 
F_z(\s,Y) := C_\a z^{-\alpha} \int_0^1 |\sigma(\cdot,v)|^\alpha
g_\alpha(Y_v ) dv \,.
\end{equation}
Then, with 
$\|g_\alpha\|_{\KK_\alpha} := \sup \{ |g_\alpha(y)| : y \in 
\KK_\alpha\}$ finite by Lemma \ref{boundg}, bounding the
$L^\infty$-norm of $F_z(\s,Y)$ for $Y \in \KK_\alpha$ 
we deduce from \eqref{systeqalt} that for any $\epsilon>0$
\begin{equation}\label{unifbY}
\sup_{|z| \geq \epsilon} \, \sup_{p\in \N} \, \| Y^{\sigma_p}_.\|_\infty \le  
2 k_\s \epsilon^{-\alpha} |C_\alpha| \|g_\alpha\|_{\mathcal K_\alpha} =: r_\s 
\end{equation}
is finite. Note that for   
$\|\wt{Y}\|_\infty \leq r$, $\|\wh{Y}\|_\infty \leq r$ and   
measurable $\wt{\s}(\cdot,\cdot)$, $\wh{\s}(\cdot,\cdot)$, 
\begin{equation}\label{eq:Fbd}
\| F_z(\wt{\s},\wt{Y})-F_z(\wh{\s},\wh{Y}) \|_\infty
\le |z|^{-\alpha} \|g_\alpha\|_r 
\Big[ \, \| \, |\wt{\s}|^\alpha - |\wh{\s}|^\alpha \|
+ \| \, |\wt{\s}|^\alpha \| \, \| \wt{Y}-\wh{Y}\|_\infty \Big] \,,
\end{equation}
where $\|g_\alpha\|_r$ is the sum of the supremum and 
Lipschitz norms of $y \mapsto C_\alpha g_\alpha(y)$ 
on the ball $\{y \in \C : |y| \leq r\}$.  
Suppressing hereafter the dependence of $Y_x^{\s_p}(z)$ on $z$,
since $(\s_p,Y^{\s_p}_.)$, $p \in \N$, 
satisfy \eqref{systeqalt}, from \eqref{unifbY} and
\eqref{eq:Fbd} we have that for any $p,q \in \N$ and $|z| \geq \epsilon$,
\begin{eqnarray*}
\| Y^{\sigma_q}_.-Y^{\sigma_p}_.\|_\infty \le
|z|^{-\alpha} \|g_\alpha\|_{r_\s} 
\Big[ \| \, |\s_q|^\alpha - |\s_p|^\alpha\| + 2 k_\s   
\| Y^{\sigma_q}_.-Y^{\sigma_p}_.\|_\infty \Big] \,.
\end{eqnarray*}
Taking $R=R(\s) \ge \epsilon$ finite such that 
$R^{-\alpha} \|g_\alpha\|_{r_\s} k_\s \leq 1/3$, 
this implies that for $|z| \geq R$
$$
\| Y^{\sigma_q}_. -Y^{\sigma_p}_. \|_\infty \le 
3 |z|^{-\alpha} \|g_\alpha\|_{r_\s}
 \| \, |\s_q|^\alpha - |\s_p|^\alpha\| \,.
$$
In view of \eqref{domaine}, we conclude that 
$(Y^{\sigma_p}_.,p\in \N)$ is for each $|z| \geq R$ 
a Cauchy sequence
in $L^\infty(0,1])$, which thus converges in this space to a
bounded measurable function $Y^{\s}_.$ from $(0,1]$ to 
the closed set ${\mathcal K}_\alpha$. Further, then
$\|Y^\s_.\|_\infty \leq r_\s$ (see \eqref{unifbY}), 
so from \eqref{eq:Fbd} and \eqref{domaine} we deduce that 
$$
\| F_z(\s,Y^\s_.)-F_z(\s_p,Y^{\s_p}_.) \|_\infty
\le \epsilon^{-\alpha} \|g_\alpha\|_{r_\s} \big[ \, \|
\, |\s|^\alpha - |\s_p|^\alpha\|
+ k_\s \| Y^\s_.-Y^{\s_p}_.\|_\infty \big] \to 0,
$$
as $p \to \infty$. With \eqref{systeqalt}
holding for the pairs $(\s_p,Y^{\s_p}_.)$, $p \in \N$,
it follows that the same applies for $(\s,Y^\s_.)$, 
thus establishing \eqref{systeqlim}. 

Turning to show the uniqueness of the solution to  
\eqref{systeqlim}, suppose $Y_j=F_z(\s,Y_j)$ 
for $\s(\cdot,\cdot)$ such that 
$k_\s = \| \, |\s|^\alpha \|$ is finite,
some $|z| \geq R(\s)$ and   
measurable $Y_j : (0,1] \to {\mathcal K}_\alpha$, $j=1,2$. 
Then, as in the derivation of \eqref{unifbY} we have that
$\|Y_j\|_\infty \leq r_\s$ for $j=1,2$. So, applying 
\eqref{eq:Fbd} once more, 
$$
\| Y_1 - Y_2 \|_\infty = \| F_z(\s,Y_1)-F_z(\s,Y_2) \|_\infty
\le |z|^{-\alpha} \|g_\alpha\|_{r_\s} 
k_\s \| Y_1-Y_2\|_\infty \le \frac{1}{3} \| Y_1 - Y_2 \|_\infty
$$
and necessarily $Y_1=Y_2$ almost everywhere on $(0,1]$.

To recap, the sequence of holomorphic mappings 
$Y^{\s_p}$ from $\C^+$ to the closed subset
$\Fb :=L^\infty((0,1];{\mathcal K}_\alpha)$ of the
complex Banach space $L^\infty((0,1];\C)$ is such that 
$Y^{\s_p}(z) \to Y^{\s}(z)$ in $\Fb$ at each point $z$ of 
the non-empty open subset $\Ball (0,R)^c\cap \C^+$. Further,
in view of \eqref{unifbY} we have that 
$(Y^{\s_p}, p \in \N)$ is locally uniformly 
bounded on $\C^+$,
hence by  
Vitali's convergence theorem for vector-valued holomorphic
mappings, it converges at every $z \in \C^+$ to 
an analytic mapping $Y^\s : \C^+ \mapsto \Fb$ 
(see \cite[Theorem 14.16]{Chae}). We also
characterized $Y^\s(z)$ for each $|z| \ge R$ as
the unique solution in $\Fb$ of \eqref{systeqlim}, so
by the identity theorem for vector-valued holomorphic 
mappings (see \cite[Exercise 9C]{Chae}), we have thus
uniquely determined $Y^\s : \C^+ \mapsto \Fb$.

Next, note that the identity \eqref{eqGlim} holds 
for $\s=\s_p \in \PC$, $p \in \N$, in which case it 
is merely the formula \eqref{eqG}. 
Recall Proposition \ref{prop-dense} 
that due to the $L^2_\star$-convergence of $\s_p$ to $\s$, 
for each $z \in \C^+$
the left hand side of these identities converge 
as $p \to \infty$ to $G_{\a,\s}(z) := \int (z-x)^{-1} d\mu^\s (x)$.
If in addition $|z| \geq R(\s)$ then 
$\| Y^{\s_p}_. - Y^\s_.\|_\infty \to 0$ and 
by dominated convergence the right hand sides of 
same identities converge to the corresponding 
expression for $Y^\s_.(z)$. Thus, 
\eqref{eqGlim} holds also for $\s \in \FF_\alpha$ and $|z| \ge R(\s)$.
With $\mu^\s$ a probability measure on $\R$, the left side of 
\eqref{eqGlim} is obviously an analytic function of $z \in \C^+$.
Further, the entire function $h_\alpha(\cdot)$ and 
its first two derivatives are uniformly bounded on 
the set
${\mathcal K}_\alpha$
(see Lemma \ref{boundg}), 
in which the analytic mapping $Y^\s : \C^+ \mapsto \Fb$ takes values. 
Hence, 
it is not hard to see that
$z \mapsto \int_0^1 h_\alpha(Y^\s_v(z))dv$ is also analytic on $\C^+$. 
We thus deduce by the identity theorem that   
\eqref{eqGlim} holds for all $z \in \C^+$.
Consequently, with $\int_0^1 h_\alpha(Y^\s_v(z))dv$   
uniformly bounded on $\C^+$, 
the Cauchy-Stieltjes 
transform of $\mu^\s$ is 
uniformly bounded on $\C^+ \cap \Ball (0,\delta)^c$. This in 
turn implies 
(by the Stieltjes-Perron inversion formula), 
that the density $\rho^\s$ of $\mu^\s$ 
with respect to Lebesgue measure on $\R \setminus \{0\}$
is bounded outside any neighborhood of zero.

We have seen already that $\|Y^\s\|_\infty \le c(\s) |z|^{-\a}$
for some $c(\s)$ finite and all $|z| \ge R$.
Hence, for $z \in \C^+$ such that $|z| \ge R$,
we have from \eqref{eqGlim} and \eqref{systeqlim}
that 
\begin{eqnarray*}
G_{\a,\s}(z) &=& 
\frac{1}{z} \big[ h_\a(0) + h_\a'(0) \int_0^1 Y_x^\s(z) dx 
+ O(|z|^{-2\a}) \big] \\
&=& 
\frac{1}{z} \big[ h_\a(0) + z^{-\a} C_\a h_\a'(0) g_\a(0) 
\int_0^1 \int_0^1 |\s(x,v)|^\a dx dv 
+ O(|z|^{-2\a}) \big] \,.
\end{eqnarray*} 
Recall Plemelj formula, that $\rho^\s(t)$ 
is the limit of
$-\pi^{-1} \Im\big(G_{\a,\s}(t+i\epsilon)\big)$ 
as $\epsilon \downarrow 0$. 
Thus, as $h_\a(0) \in \R$, it follows  
that $t^{\a+1}\rho^\s(t) \to L_\a
\int |\s(x,v)|^\a dx dv$ as $t \to \infty$
and it is not hard to check that 
$L_\a= -\pi^{-1} h_\a'(0) g_\a(0) \Im(C_\a)$
equals $\frac{\alpha}{2}$ (by Euler's reflection formula
for the Gamma function). 

Turning to verify the last statement of the theorem, note that
the equivalence between $\s \in \FF_\alpha$ and 
$\wt{\s} \in \PC$ implies that the piecewise constant 
$Y^{\wt{\s}}_. (z) : (0,1] \mapsto {\mathcal K}_\alpha$
we have constructed before out of $(Y_s(z), 1 \leq s \leq q)$ 
satisfies \eqref{systeqlim} for any $x \in (0,1]$ 
and all $z \in \C^+$. It then follows by the 
uniqueness of such solution of \eqref{systeqlim} that 
$Y^{\wt{\s}}_x (z) = Y^\s_x (z)$ for all $z \in \C^+$ such that 
$|z| \geq R(\s)$ and almost every $x \in (0,1]$. In view of 
\eqref{eqGlim},
 the Cauchy-Stieltjes transform of $\mu^\s$ 
coincides for such $z$ with the Cauchy-Stieltjes
transform $G_{\alpha,\wt{\sigma}}(z)$ of $\mu^{\wt{\s}}$. 
As such information uniquely 
determines the probability measure in question, it follows
that $\mu^\s = \mu^{\wt{\s}}$.

\section{Proof of Proposition \ref{gammaone} and Theorem \ref{wishart-amir}}
\label{sec:wis}

\subsection{Convergence to $\mu_\a^\gamma$ and its characterization}
\label{sec:wis-conv}
Consider the $(N+M)$-dimensional square matrix 
$$
\bA_{N,M}=\left(
\begin{array}{cc}
0&a_{N+M}^{-1} \bX_{N,M}\cr
a_{N+M}^{-1} \bX_{N,M}^t&0\cr
\end{array}
\right),$$
noting that $\bB_{N,M}=  \bA_{N,M}^2$ is then of the form
$$
\bB_{N,M}= \left(
\begin{array}{cc}
a_{N+M}^{-2} \bX_{N,M} \bX_{N,M}^t&0\cr
0&a_{N+M}^{-2} \bX_{N,M}^t\bX_{N,M}\cr
\end{array}
\right)=: \left(
\begin{array}{cc} \bW_{N,M}& 0\cr
0&  \widetilde \bW_{N,M}\cr
\end{array}
\right)
$$
and that the eigenvalues of $\bW_{N,M}$ consist of 
the $M$ eigenvalues of $\widetilde \bW_{N,M}$ 
augmented by 
$N-M$ zero eigenvalues.
Therefore,
\begin{equation}\label{convwis}
\mun_{\bB_{N,M}}=\frac{2 N}{N+M}
 \mun_{ \bW_{N,M}} +\frac{M-N}{N+M}\d_0.
\end{equation}
We next show that with probability one $\mun_{\bB_{N,M}}$ 
converges weakly. Since $\bB_{N,M}=\bA_{N,M}^2$,
for any $f(\cdot)$ bounded and continuous,
 \begin{equation}\label{convwis2}
\int f(x)d\mun_{\bB_{N,M}}= \int f(x^2) d\mun_{\bA_{N,M}}
\end{equation} 
so that it is enough to prove the convergence of $\mun_{\bA_{N,M}}$.
To this end, consider $\bA_{N+M}^\sigma$ for  
\begin{equation}\label{eq:covsig}
\sigma(x,y)=\left\lbc
\begin{array}{l}
1 \;\; \mbox{ if }\,  x, y\in (\frac{1}{1+\gamma},1]\ts
(0, \frac{1}{1+\gamma}]\bigcup (0, \frac{1}{1+\gamma}]\ts 
 (\frac{1}{1+\gamma},1]\cr
0 \qquad \qquad \mbox{ otherwise.}
\end{array}\right.
\end{equation}
Note that with $M/N \to \gamma$ and  
$$
\mbox{rank}(\bA_{N,M}-\bA_{N+M}^\sigma) \leq 2 \Big| \big[ \frac{N+M}{1+\gamma} 
\big] - N \Big| \,, 
$$ 
it follows by Lidskii's theorem that 
$d_1(\mun_{\bA_{N,M}},\mun_{\bA_{N+M}^\sigma}) \to 0$ as $N \to \infty$.
Therefore, applying Theorem \ref{theo-limitpoint-uniq-amir} 
we deduce that with probability one 
$\mun_{\bA_{N,M}}$ converges weakly to the non-random probability measure
$\mu^\sigma$. By \eqref{convwis2}
and  \eqref{convwis} this implies that $\mun_{\bB_{N,M}}$
and $\mun_{\bW_{N,M}}$ also 
converge weakly to non-random probability measures,
\begin{equation}\label{eq:alicef1}
\mu_B:=\frac{2}{1+\gamma} \mu^\gamma_\alpha +\frac{\gamma-1}{\gamma+1}\d_0 
\end{equation}
and $\mu^\gamma_\alpha$, respectively. 

We proceed to show that 
for $z \in \C^+$  the Cauchy-Stieltjes transform of $\mu^\gamma_\alpha$ 
is 
\begin{equation}
\label{defGwis}
G_\alpha^\gamma(z)
=\frac{1}{z} h_\alpha(Y_1(\sqrt{z}\,)) = \frac{1-\gamma}{z}+\frac{\gamma}{z}
h_\alpha(Y_2(\sqrt{z}\,)) \,.
\end{equation}
Indeed, note that $(Y_1(z), Y_2(z))$ of \eqref{defywis} are precisely  
the solution of \eqref{systemeqY} considered in Proposition \ref{uniqueprop}
for $z \in \C^+$ and our special choice of $\sigma(\cdot,\cdot)$. 
Theorem \ref{theo-limitpoint-uniq-amir} thus asserts that 
the Cauchy-Stieltjes transform 
$G_{\alpha,\sigma}$ of $\mu^\sigma$ is then such that, 
for any $z \in \C^+$,  
\begin{equation}\label{eq:amir-new2}
z G_{\alpha,\sigma}(z)=
\frac{1}{1+\gamma} h_\alpha(Y_1(z)) +  
 \frac{\gamma}{1+\gamma}  h_\alpha(Y_2(z)) \,.
\end{equation}
Moreover, by 
\eqref{convwis2} and the symmetry of the law $\mu^\sigma$ (see 
Corollary \ref{cor-sym}), we have that 
$$
\int\frac{1}{z-x}d\mu_B(x)= \int\frac{1}{z-x^2} d\mu^\sigma(x)
= \frac{1}{\sqrt{z}} \int\frac{1}{\sqrt{z}-x}  d\mu^\sigma(x)
\,.
$$ From this and the formula \eqref{eq:alicef1}
relating $\mu_B$ to $\mu^\gamma_\alpha$, we deduce that
\begin{equation}\label{eq:amir-new1}
G_\alpha^\gamma(z)
=\frac{1+\gamma}{2\sqrt{z}} G_{\alpha,\sigma}(\sqrt{z}\,)
+\frac{1-\gamma}{2 z} \,.
\end{equation}
Multiplying the left identity of \eqref{defywis} by $Y_2(z)$
and the right identity of \eqref{defywis} by $Y_1(z)$ we find that 
$Y_1(z) g_\alpha(Y_1(z)) = \gamma Y_2(z) g_\alpha(Y_2(z))$
and hence
\begin{equation}\label{eq:serban}
h_\alpha(Y_1(z))=1-\gamma + \gamma h_\alpha(Y_2(z)) \,.
\end{equation}
Upon combining (\ref{eq:amir-new2}), (\ref{eq:amir-new1})
and (\ref{eq:serban}) we get the formula \eqref{defGwis}. 

\subsection{Analysis of the limiting measures}\label{sec:anal-lim}
In case $\gamma=1$, the function  
$\sigma(x,y)$ of \eqref{eq:covsig} is equivalent to 
the constant $\wt{\sigma}=2^{-1/\alpha}$, which as 
in Remark \ref{rmkbnd} implies that $\mu^\s$ has
the density $\rho^\s(t)= 2^{1/\alpha} \rho_\a(2^{1/\alpha} t)$. 
Further, 
we see from \eqref{eq:amir-new1} that 
$G_\a^1(z)=G_{\a,\s}(\sqrt{z}\,)/\sqrt{z}$, 
so the probability 
measure $\mu_\a^1$ on $(0,\infty)$ has the density
$\rho^\s(\sqrt{t})/\sqrt{t}$, as stated. 

Considering hereafter $\gamma \in (0,1)$, observe 
that 
by Theorem 
\ref{theo-limitpoint-uniq-amir}, $Y_1(z)$ and $Y_2(z)$ extend 
continuously to functions on $(0,\infty)$ that are  
analytic outside of some bounded set.
By
the analyticity of $h_\alpha(\cdot)$ 
and \eqref{defGwis} we have the corresponding 
continuous extension of $G_\alpha^\gamma(z)$, 
whereby Plemelj formula
provides the density 
$\rho^\gamma_\a(t)=-\pi^{-1} \Im(G_\alpha^\gamma(t))$ 
of $\mu^\gamma_\alpha$ with respect to Lebesgue measure,
as in \eqref{eq:den-wis}. In particular,  
$\rho_\a^\gamma(t)=\frac{1+\gamma}{2\sqrt{t}}\rho^\s(\sqrt{t}\,)$ 
by \eqref{eq:amir-new1}, 
with $\s(\cdot,\cdot)$ of \eqref{eq:covsig}, 
so we read the tail behavior of 
$\rho_\a^\gamma$ out of that of $\rho^\s$ (per Theorem 
\ref{weakening}).

Turning next to the behavior near zero of the probability measure 
$\mu_\alpha^\gamma$, recall that $G_\alpha^\gamma(z)$ is analytic
outside the support $[0,\infty)$ of $\mu_\alpha^\gamma$ 
and the non-tangential limit of 
$z G_\alpha^\gamma(z)$ at the boundary point $z=0$ 
(i.e., its limit as $|z| \to 0$ while 
$\theta_0 \le \arg(z) \le 2\pi-\theta_0$ 
for some fixed $\theta_0>0$), exists 
and equals to the mass at zero of this measure. 
Further, the identity \eqref{defGwis} extends by continuity 
to $z=-x^2$, $x>0$ and $\sqrt{z}=ix \in \C^+$, hence 
\begin{equation}\label{eq:nontgh}
\mu_\alpha^\gamma(\{0\})=
\lim_{\stackrel{z \to 0}{\sphericalangle}} z G_\alpha^\gamma(z)=
\lim_{x\downarrow 0} h_\alpha (Y_1(ix\,)) = 
1-\gamma+\gamma\lim_{x \downarrow 0} h_\alpha(Y_2(ix\,))  \,.
\end{equation}
Since $Y_s(-\oo{z})=\oo{Y_s}(z)$ for $s=1,2$ and all $z \in \C^+$
(see Proposition \ref{uniqueprop}), 
we have in particular that $Y_1(ix)$ and $Y_2(ix)$ 
are real-valued for all $x>0$. As 
$g_\alpha(y)>0$ for $y \in \R$,
it further follows from \eqref{defywis} 
that $Y_s(i\R^+) \subseteq \R^+$ for $s=1,2$. 
With $h_\alpha: \R^+ \to \R^+$ monotone decreasing and
$h_\alpha(y) \to 0$ as $\Re(y) \to \infty$, it thus follows
from \eqref{eq:serban} that 
$h_\alpha(Y_1(ix)) \ge 1-\gamma$ for all $x>0$ 
and consequently, that 
$(Y_1(ix),x>0)$ is uniformly bounded. This of course
implies that 
$(ix)^\alpha Y_1(ix) \to 0$ as $x \downarrow 0$ 
which in view of \eqref{defywis} requires that 
$g_\alpha(Y_2(ix)) \to 0$ as well. As
$g_\alpha: \R^+ \to \R^+$ is bounded away from zero
on compacts, we deduce that  
$Y_2(ix) \to \infty$ as $x \downarrow 0$, hence  
$h_\alpha(Y_2(i x)) \to 0$ and 
$$
\mu_\alpha^\gamma(\{0\})= 
\lim_{x\downarrow 0}h_\a (Y_1(ix))=1-\gamma\,,
$$
as claimed. 
Moreover, from the preceding   
$Y_1(ix) \to h_\alpha^{-1}(1-\gamma) := b \in \R^+$ as 
$x \downarrow 0$. Since $Y_1(z)$ is a
$\KK_\a$-valued continuous function of $z \in \C^+$, 
its cluster set $Cl(0)$ at the boundary point $z=0$ of $\C^+$ 
is a closed, connected subset of $\KK_\a$ 
(see \cite[Theorem 1.1]{collin}). Further, $Cl(0)$
contains $b \in \R^+$, so its boundary $\partial Cl(0)$ 
must intersect $[0,\infty)$.
We have seen that $Y_1(z)$ extends continuously 
on $(0,\infty)$ which due to the relation $Y_1(-\oo{z})=\oo{Y_1}(z)$
implies that it also extends continuously on
$(-\infty,0)$ with $Y_1(-t)=\oo{Y_1}(t)$ for all $t>0$. 
In particular, since the 
cluster set of $Y_1(t)$ for non-zero, real-valued $t \to 0$ contains 
$\partial Cl(0)$ (see \cite[Theorem 5.2.1]{collin}),   
necessarily the cluster set of $Y_1(\sqrt{t}\,)$
at the boundary point $t=0$ of $\R^+$ also 
intersects $[0,\infty)$. 

Using the bound $\sin(\zeta)/\zeta \ge 1-\zeta^2/6$,
we deduce 
from (\ref{deffal})
that if $\Im(h_\alpha(x+iy))=0$ 
for $y \ne 0$ then $y^2 \ge 6 h_\alpha'(x)/h_\alpha'''(x)$,
and direct calculation 
shows that this function of $x$ 
is positive and monotone non-decreasing.
Thus, with $Y_1(\sqrt{t}\,) \in {\mathcal K}_\alpha$ 
there exists $\delta>0$ such that 
if $\Im(h_\alpha(Y_1(\sqrt{t}\,)))=0$
then either $Y_1(\sqrt{t}\,) \ge 0$ is real-valued, 
or $|\Im(Y_1(\sqrt{t}\,))| \ge \delta$.
By \eqref{eq:den-wis}, 
the latter property applies whenever $t >0$ is such that 
$\rho_\alpha^\gamma(t)=0$. Moreover, by the
continuity of $Y_1(\cdot)$ on $(0,\infty)$,
%
if the density $\rho_\alpha^\gamma$ vanishes on an open 
interval $\Ib$, then either $Y_1(\sqrt{t}\,) \ge 0$ for 
all $t \in \Ib$ or  
$\inf_{t \in \Ib} |\Im(Y_1(\sqrt{t}\,))| \ge \delta$. 
For $\Ib=(0,\epsilon)$ we have already seen that 
the cluster set of $Y_1(\sqrt{t}\,)$ as $t \downarrow 0$
intersects $[0,\infty)$,
so necessarily $Y_1(\sqrt{t}\,) \in [0,\infty)$ 
for all $t \in \Ib$.  
Since \eqref{defywis} extends to 
$z \in \R^+$ and $g_\alpha(\R) \subseteq \R^+$ this in turn 
implies that $Y_2(\sqrt{t}\,) = i^\alpha r(t)$ for 
some continuous function $r:\Ib \mapsto \R^+$ 
such that $r(t) \to \infty$ as $t \downarrow 0$. The 
entire function $f_{\alpha,\theta} (z):=\frac{1}{2i} [h_\alpha(e^{i \theta} z)
-h_\alpha(e^{-i \theta} z) ]$ is then by \eqref{defGwis} such that
$$
f_{\alpha,\theta} (r(t))
=\Im\big(h_\alpha(e^{i \theta} r(t))\big)
=\Im\big(h_\alpha(Y_2(\sqrt{t}\,))\big)=0
$$
for $\theta=\pi \alpha/2$ and all $t \in \Ib$, which 
with $f'_{\alpha,\theta} (0)=\sin(\theta) h_\alpha'(0) \ne 0$  
contradicts the identity theorem.
We thus conclude that $\rho_\a^\gamma$ 
does 
not vanish on any non-empty interval $(0,\epsilon)$. 

\subsection{Properties of $\mu_\alpha$}
$~$

\noindent 
{\bf Proof of Proposition \ref{gammaone}.}
Taking $\s \equiv 1$ we deduce from Theorem 
\ref{theo-limitpoint-uniq-amir} that $Y(z)$ of \eqref{eq:BAG6}
is in $\KK_\a$ hence uniformly bounded on 
$\C^+ \setminus \{z: |z| < \delta \}$.
Similarly to the argument of Section \ref{sec:anal-lim}, if 
$y \in Cl (t)$ at $t>0$ 
then $y \in \KK_\a$ and 
$F(t,y):=t^\a y - C_\a g_\a(y)=0$,
so from the analyticity of $y \mapsto F(t,y)$ and uniform boundedness
of $g_\a(\cdot)$ on $\KK_\a$ we deduce 
by the identity theorem 
that $Y(z)$ extends continuously to a function $Y(t)$ on $(0,\infty)$.
%
Moreover, $t \mapsto Y(t)$ is real-analytic on $(0,\infty)$ 
outside the set 
of those $t>0$ where both $\partial_y F(t,y)=0$ and $F(t,y)=0$ 
at $y=Y(t)$. The latter set is clearly contained in the 
set $\DD^+_\a$ of $t>0$ such that   
$t^\a = C_\a g'_\a(y)>0$ for some $y \in \KK_\a$ at which 
$y g_\a'(y) - g_\a(y)=0$. Note that  
the set $\DD^+_\a$ is discrete since 
$y g_\a'(y) - g_\a(y)$ is an entire function of $y$. Further,  
$\DD^+_\al$ is a bounded set (by the uniform boundedness
of $g'_\a(\cdot)$ on $\KK_\a$, see Lemma \ref{boundg}). 
Consequently, $\DD^+_\a$ is a finite set. 
We already saw that $Y(-\oo{z})=\oo{Y}(z)$ for all
$z \in \C^+$, so $Y(-\oo{z})$ extends continuously 
to $Y(-t)=\oo{Y}(t)$ for any $t>0$ at which $Y(\cdot)$ extends 
continuously. We thus deduce that the exceptional set where 
$t \mapsto Y(t)$ may be non-analytic is contained in the finite
set $\{0,\pm t: t \in \DD^+_\al\}$, as claimed.
With $h_\a$ an entire function, it then follows 
that $G_\a(\cdot)$ extends continuously
to $\R \setminus \{0\}$ with the formula \eqref{rhoalpha}
for the symmetric density $\rho_\a(t)$ on $\R \setminus \{0\}$
that is real-analytic outside $\DD_\a$ (to verify 
the right-most expression in \eqref{rhoalpha} note that 
$h_\a(Y(z))=1-\frac{\alpha}{2 C_\a} z^\a Y(z)^2$
by \eqref{deffal} and \eqref{eq:BAG6}). 

If the symmetric density $\rho_\a$ vanishes on 
an open interval, then it also vanishes on some 
open interval $\Ib \subseteq \R^+$ where the 
continuous function $t\mapsto Y(t)$ is the 
limit of $Y(z)$ as $\arg(z) \downarrow 0$, hence 
$\arg(Y(t)) \in [0,\frac{\alpha \pi}{2}]$
(see \eqref{eq:amir-arg-id}). Further, the  
right-most expression in \eqref{rhoalpha} tells
us that $\sin(2\arg(Y(t))-\frac{\alpha\pi}{2})=0$ for 
all $t \in \Ib$, so necessarily $Y(t) = e^{i \theta} r(t)$
for $\theta = \frac{\alpha \pi}{4}$ and 
the continuous $r: \Ib \mapsto [0,\infty)$.
Since \eqref{eq:BAG6} extends to $t \in \Ib$ and
$g_\a(0) \ne 0$, we see that $Y(t) \ne 0$ is 
injective on $\Ib$, 
so $r(\Ib)$ contains an accumulation point. 
Finally, as argued at the end of 
Section \ref{sec:anal-lim}, from \eqref{rhoalpha} we also 
have that $f_{\a,\theta}(r(t))=\Im(h_\a(Y(t)))=0$
for the entire function $f_{\a,\theta}(\cdot)$ and
all $t \in \Ib$, yielding a contradiction. Consequently,
the density $\rho_\a$ does not vanish on any open interval,
as claimed.

It remains to show that $\mu_\a$ has a uniformly bounded density.
We get this by proving the stronger statement that $G_\a(z)$ is uniformly 
bounded on the connected set $\C^+_*:=\C^+ \cup \R^+$.
To this end, let $Cl_*(0)$ denote the cluster set 
of the continuous function $Y(z)$ at the boundary point $z=0$ of $\C^+_*$.
If $y \in \C$ is in $Cl_*(0)$ then there exists $z_n \in \C_*^+$ 
such that $z_n \to 0$ and $Y(z_n) \to y$, hence 
$g_\a(y)=0$ by \eqref{eq:BAG6}. Whereas $Cl_*(0)$ is a 
closed connected subset of $\C \cup \{\infty\}$ 
(by \cite[Theorem 1.1]{collin}), the set 
of zeros of the entire function $g_\a(\cdot)$ is discrete,
so necessarily $Cl_*(0)$ is a single point.
Taking $z=ix$, $x>0$ we have
that $Y(ix) \in \R^+$, hence $Y(ix) \to \infty$ by \eqref{eq:BAG6}
and the boundedness of $g_\a(\R^+)$, from which we deduce that
$Cl_*(0)=\{\infty\}$. Considering \eqref{eq:serban-new-bd} for 
$\beta=2$ we note that 
$|h_\a(y)| \le c_0 h_\a(\xi |y|)$ for some $\xi=\xi(\alpha)>0$, 
$c_0=c_0(\alpha)$ finite and all $y \in \KK_\a$. In particular, 
for $z \in \C^+_*$ such that $|z| \to 0$ we 
already know that 
$Y(z) \in \KK_\a$ and $|Y(z)| \to \infty$, hence 
by the preceding bound and the decay to zero of 
$h_\a(r)$ as $r \in \R^+$ goes to infinity, we have
that $h_\a(Y(z)) \to 0$. That is,
$Y(z) g_\a(Y(z)) \to 2/\alpha$ (see \eqref{deffal}). 
Next, observing that 
$h_\a(r) \le c_1 r^{-2/\alpha}$ for some 
positive, finite $c_1$ 
and all $r \in \R^+$,
we deduce from \eqref{new-amir-eq3} and \eqref{eq:BAG6} that 
for some finite constants $c_i=c_i(\alpha)$  
and all $z \in \C^+_*$,
\begin{eqnarray}\label{eq:new-bd-g}
|G_\a(z)| &=& |z|^{-1} |h_\a(Y(z))| \le c_0 |z|^{-1} h_\a(\xi|Y(z)|)
\nonumber \\
&\le& c_2 (|z^\alpha Y(z)^2|)^{-1/\alpha} = c_3 |Y(z) g_\a(Y(z))|^{-1/\alpha}
\,.
\end{eqnarray}
For any $\delta>0$ 
we have the uniform boundedness of $G_\a(z)$ on $\C_*^+ \cap \Ball
(0,\delta)^c$ 
(from the uniform boundedness of $h_\a$ on $\KK_\a$). 
Further, 
for $z \in \C^+_*$ converging to zero 
the right side of \eqref{eq:new-bd-g} remains bounded 
(by $c_3 (2/\alpha)^{-1/\alpha}$), 
hence $G_\a(z)$ is uniformly
bounded on $\C^+_*$, as stated.

\begin{rmk} We saw that $Y(ix) \in \R^+$ and
$x^\a Y(ix)^2 = |C_\a| Y(ix) g_\a(Y(ix)) \to 2 |C_\a|/\alpha$
as $x \downarrow 0$. With $\zeta=\sqrt{2|C_\a|/\a}$, it then
follows by dominated convergence that 
$\pi^{-1} x^{-1} h_\a(Y(ix)) \to 
\pi^{-1} \int_0^\infty \exp(-\zeta u^{\alpha/2}) du$
finite and positive. This is of course the value of 
$\rho_\a(0)$, provided $\rho_\a$ is continuous at $t=0$. 
\end{rmk}

\begin{lem}\label{alice-cont}
The measures $\mu_\a$ converge weakly to $\mu_2$ when $\a \uparrow 2$.
\end{lem}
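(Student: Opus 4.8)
The plan is to establish the weak convergence $\mu_\a \Rightarrow \mu_2$ as $\a \uparrow 2$ by proving pointwise convergence of the Cauchy--Stieltjes transforms $G_\a(z) \to G_2(z)$ at each fixed $z \in \C^+$, which (by the standard equivalence between weak convergence of probability measures and pointwise convergence of their Cauchy--Stieltjes transforms on $\C^+$, together with tightness) suffices. Since $G_\a(z) = z^{-1} h_\a(Y_\a(z))$ with $Y_\a(z)$ the unique solution of $z^\a Y_\a(z) = C_\a g_\a(Y_\a(z))$ tending to zero at infinity, the core task reduces to controlling the limiting behavior of $Y_\a(z)$, of the constant $C_\a = i^\a \Gamma(1-\tfrac{\a}{2})/\Gamma(\tfrac{\a}{2})$, and of the entire functions $g_\a, h_\a$ as $\a \uparrow 2$.

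First I would record the elementary limits: $C_\a \to C_2 = -1$ (using $\Gamma(1-\tfrac{\a}{2}) \to \infty$ is the obstruction here, so one must be careful --- actually $\Gamma(1-\tfrac{\a}{2})\sim \tfrac{2}{2-\a}$ diverges, so $C_\a$ itself does \emph{not} converge, and instead one should track $C_\a g_\a(y)$ as a whole). The right normalization is to note from \eqref{defg} that for $y$ in a fixed compact subset of $\KK_2 = \{Re^{i\theta}: |\theta| \le \pi, R \ge 0\} = \C$, the combination $\tfrac{\a}{2} y\, g_\a(y) = 1 - h_\a(y) \to \tfrac{y}{1+y}$ and $g_\a(y) = \int_0^\infty t^{\a/2-1}e^{-t}\exp\{-t^{\a/2}y\}\,dt$, whereas $C_\a = e^{i\pi\a/2}\Gamma(1-\tfrac{\a}{2})/\Gamma(\tfrac{\a}{2})$ has $\Gamma(1-\tfrac{\a}{2}) \sim 2/(2-\a)$ and $\Gamma(\tfrac{\a}{2}) \to 1$. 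Meanwhile $z^\a Y_\a(z) = C_\a g_\a(Y_\a(z))$; multiplying both sides by $\tfrac{\a}{2}Y_\a(z)$ gives $\tfrac{\a}{2} z^\a Y_\a(z)^2 = C_\a \tfrac{\a}{2} Y_\a(z) g_\a(Y_\a(z)) = C_\a(1 - h_\a(Y_\a(z)))$, and since $\tfrac{\a}{2}C_\a = e^{i\pi\a/2}\tfrac{\a}{2}\Gamma(1-\tfrac{\a}{2})/\Gamma(\tfrac{\a}{2}) \to -1$ (because $\tfrac{\a}{2}\Gamma(1-\tfrac{\a}{2}) = \tfrac{\a}{2-\a}\Gamma(2-\tfrac{\a}{2}) \to 1$), we get the clean relation $z^\a Y_\a(z)^2 \approx -(1 - h_\a(Y_\a(z)))$ in the limit. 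Combined with $G_\a(z) = z^{-1}h_\a(Y_\a(z))$ and letting $G := \lim G_\a(z)$, $Y := \lim Y_\a(z)$ along a subsequence (using the uniform bound $Y_\a(z) \in \KK_\a$ and $|Y_\a(z)| \le \kappa|z|^{-\a}$ from Proposition \ref{uniqueprop}, which is locally uniform in $\a$), I obtain in the limit $zG = h_2(Y) = 1/(1+Y)$ and $z^2 Y^2 = -(1 - 1/(1+Y)) = -Y/(1+Y) = -zGY$, i.e. $zY = -G$, so $Y = -G/z$ and $zG = 1/(1 - G/z)$, which rearranges to the semicircle equation $z - G = 1/G$, i.e. $G = (z - \sqrt{z^2-4})/2 = G_2(z)$.

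The steps, in order: (i) prove the locally uniform (in $\a$) a priori bounds on $Y_\a(z)$ on $\C^+$, so that $\{Y_\a(\cdot)\}_{\a}$ is a normal family on $\C^+$ and any subsequential limit $Y(\cdot)$ is analytic and still satisfies $|Y(z)| \le \kappa|z|^{-2}$; (ii) establish that $g_\a \to g_2$ and $h_\a \to h_2 = 1/(1+\cdot)$ uniformly on compact subsets of $\C$ (from \eqref{defg}--\eqref{deffal} by dominated convergence, noting the domination is uniform once $\Re(y) > -1 + \delta$, which holds in $\KK_2$ near the relevant values), and that $\tfrac{\a}{2}C_\a \to -1$; (iii) pass to the limit in the rewritten equation $z^\a Y_\a(z)^2 = \tfrac{\a}{2}C_\a(1 - h_\a(Y_\a(z)))$ along a subsequence to identify the limit $Y(z) = -G_2(z)/z$ and hence $G_\a(z) \to G_2(z)$; (iv) since the limit is subsequence-independent, conclude $G_\a(z) \to G_2(z)$ for every $z \in \C^+$, and invoke the Cauchy--Stieltjes characterization of weak convergence. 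Tightness of $\{\mu_\a\}$ follows since each $\mu_\a$ is a probability measure and $\sup_\a |G_\a(z)|$ is controlled on $\C^+$, or alternatively from the uniform second-moment-type bounds implicit in the construction; in fact Remark \ref{rem-alpha-cont} already asserts continuity of $\a \mapsto \mu_\a$ on $(0,2)$, so the only new content is the limit point at $\a = 2$.

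The main obstacle is step (ii)--(iii): one must handle the fact that $C_\a$ itself blows up like $(2-\a)^{-1}$, and that $g_\a(y) \to 0$ is \emph{not} true uniformly --- rather $g_\a(y) \sim \tfrac{2}{\a}\cdot\tfrac{1}{1+y}$ only after multiplication by $\tfrac{\a}{2}$, or equivalently one should never work with $g_\a$ and $C_\a$ separately but only with the products $\tfrac{\a}{2}C_\a$ and $\tfrac{\a}{2}y g_\a(y) = 1 - h_\a(y)$, for which the limits are finite and well-behaved. A secondary technical point is ensuring that the subsequential limit $Y(z)$ of $Y_\a(z)$ does not escape to a region where $h_2(y) = 1/(1+y)$ has its pole (i.e. $Y(z) = -1$); this is ruled out because $h_\a(Y_\a(z)) = z G_\a(z)$ stays bounded (indeed $|G_\a(z)| \le (\Im z)^{-1}$) while $h_2$ is unbounded near $y = -1$, so the limit $Y(z)$ stays away from $-1$ and the identification goes through. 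Once these are in place, the algebraic manipulation yielding the semicircle equation is routine. \qed
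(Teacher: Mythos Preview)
Your proposal follows a route entirely different from the paper's and, as written, contains a fatal computational error that invalidates the argument. The crux of your plan is the claim that $\tfrac{\a}{2}C_\a \to -1$ as $\a\uparrow 2$, which you justify by writing $\tfrac{\a}{2}\Gamma(1-\tfrac{\a}{2}) = \tfrac{\a}{2-\a}\Gamma(2-\tfrac{\a}{2}) \to 1$. The algebraic identity is correct, but the limit is not: $\Gamma(2-\tfrac{\a}{2})\to\Gamma(1)=1$ while $\tfrac{\a}{2-\a}\to+\infty$, so in fact $\tfrac{\a}{2}C_\a \sim -\tfrac{2}{2-\a}\to-\infty$. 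With this, the rewritten equation $\tfrac{\a}{2}z^\a Y_\a(z)^2 = C_\a(1-h_\a(Y_\a(z)))$ no longer yields a finite limiting relation by the algebra you propose.

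The difficulty is structural, not merely computational. Since $|G_\a(z)|\le(\Im z)^{-1}$ uniformly, one has $|1-h_\a(Y_\a(z))|=|\tfrac{\a}{2}Y_\a g_\a(Y_\a)|$ bounded, and combining this with $z^\a Y_\a = C_\a g_\a(Y_\a)$ gives $|Y_\a(z)|^2 = |C_\a|\cdot|Y_\a g_\a(Y_\a)|/|z|^\a$, so $|Y_\a(z)|$ diverges like $(2-\a)^{-1/2}$. Thus the locally uniform a~priori bound you assert in step~(i) fails, $\{Y_\a\}$ is not a normal family, and the subsequential-limit argument in step~(iii) cannot proceed as stated. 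A direct Cauchy--Stieltjes proof would require a genuine rescaling of $Y_\a$ and sharp asymptotics of $g_\a,h_\a$ in this singular regime, none of which is supplied.

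The paper avoids the singular implicit equation altogether. It fixes a truncation level $B$, uses that the bound \eqref{trunc1} holds with constants uniform over $\a\in(\a_0,2)$ to get $d_1(\mu_\a,\mu_\a^B)\le 3\e$ for all such $\a$ once $B>B(\e,\a_0)$, and then invokes the moment-method results of \cite[Lemmas~9.1--9.2]{BAG6} to obtain $\mu_\a^B\to\mu_2$ as $\a\uparrow 2$ for each fixed $B$. A triangle inequality in $d_1$ finishes the proof. This route never touches $C_\a$, $g_\a$, or $Y_\a$ near $\a=2$.
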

\begin{proof} Applying the 
method of moments, 
as developed
by Zakharevich \cite{zakh}, it is shown in 
\cite[Theorem 1.8]{BAG6} that for any $B<\infty$ fixed,
$\E [\mun_{\bA_N^B}]$ converges
to some non-random $\mu^B_\a$ as $N \to \infty$
(for instance, 
when  $x_{ij}$ are stable variables of index $\alpha$).
Examining the dependence of $C(B)$ of \cite[equation (13)]{BAG6}
on $\a$, we see that \eqref{trunc1} applies for 
some $\d(\e,B)>0$, all $B>B(\e,\a_0)$ and any $\a \in (\a_0,2)$.
For such $B$ and $\a$ we thus have, in view of the 
almost sure convergence of $\mun_{\bA_N}$ to
$\mu_\a$, that 
$\P(d_1(\mu_\a,\mun_{\bA_N^B}) \ge 3\e) \to 0$ as
$N \to \infty$, from which we deduce by the 
boundedness and convexity of $d_1$ that
$$
d_1(\mu_\a,\mu_\a^B) = \lim_{N \to \infty}
d_1(\mu_\a,\E[\mun_{\bA_N^B}]) \le 
\limsup_{N \to \infty} \E \big[ d_1(\mu_\a,\mun_{\bA_N^B})\big] \le 
3\e \,.
$$
Fixing $B<\infty$ it further 
follows from \cite[Lemmas 9.1 and 9.2]{BAG6}
that $\mu^B_\alpha$ converges weakly to the
semi-circle $\mu_2$ when $\alpha \to 2$. Hence, fixing  
$\a_0>0$, $\e>0$ and $B>B(\e,\a_0)$, by the triangle inequality 
$$
d_1(\mu_\alpha,\mu_2)\le 
d_1(\mu_\a,\mu^B_\a) +d_1(\mu_\alpha^B,\mu_2)
\le 3\e + d_1(\mu_\a^B,\mu_2) \to 3\e 
$$
as $\a \uparrow 2$. Taking $\e \downarrow 0$ we thus
conclude that $\mu_\a \to \mu_2$ when $\a \uparrow 2$.
\end{proof}

\section{Diagonal perturbation: 
Proof of Theorem \ref{weakeningD}}\label{sec:diag}

\subsection{The extension of Theorem \ref{theo-limitpoint-uniq-amir}}
\label{sec:ext-thm12}
We shall prove the convergence 
of the expected spectral measures $\E[\mun_{\bA_N + \bD_N}]$ and 
characterize their limit in case
$\sigma\in\PC$ is given as in Section \ref{sec:ind}
by \eqref{eq:sPC} for some $q \in \N$, 
$0=b_0<b_1<\cdots<b_{q}=1$
and $\sigma_{rs}=\sigma_{sr}$ with the 
corresponding 
random matrix $\bA_N=\bA_N^\sigma$ and the 
$N\ts N$ piecewise constant matrix $\bsigma^N$.
To this end, recall 
that $\bD_N$ is a diagonal $N \ts N$ matrix, whose entries 
$\{D_N(k,k), 1 \le k \le N \}$ are real valued,  
independent of the random variables $(x_{ij}, 1 \leq i \le j < \infty)$
and identically distributed, of law $\mu^{\bD}$ having a finite second 
moment. In view of the assumed finite second moment of $\mu^{\bD}$,  
the proof of \eqref{trunc2} and Lemma \ref{tight} also show  
that the sequences $(\E[\mun_{\bA_N+\bD_N}];N\in\N)$, 
$(\E[\mun_{\bA^B_N+\bD_N}]; N\in\N)$ and
$(\E[\mun_{\bA_N^\kappa+\bD_N}]); N\in\N)$ are 
tight for the topology of weak convergence on $\Pa(\R)$,
and that $(\E[\mun_{\bA_N^\kappa+\bD_N}]); N\in\N)$ has the same  
set of limit points as $(\E[\mun_{\bA_N+\bD_N}];N\in\N)$.
Setting now $\bG_N(z)=(z\bI_N-\bD_N-\bA_N)^{-1}$ we define for $z\in\C^+$
the probability measures 
$L_N^z$ 
and $L_{N,r}^z$ on $\C$ as in \eqref{eq:lnz} and \eqref{eq:lnzr},
with $\bG_N^\kappa(z)$ and $L_{N,r}^{z,\kappa}$ denoting again 
the corresponding objects when $\bA_N$ is replaced by $\bA_N^\kappa$.

For $0< \kappa < \frac{1}{2(2-\alpha)}$
any $1\le r\le q$ and bounded Lipschitz function $f$ we
then have similarly to Lemma \ref{approxss} that as $N \to \infty$
\begin{equation}\label{eq:approxes-ext}
\Big|
\E\big[L_{N,r}^{z,\kappa}(f)\big]-
\E\Big[f\Big( \big(z- D_N(0,0) - \sum_{k=1}^N
\widetilde A_N^\kappa([N b_r],k)^2 G_N^\kappa(z)_{kk}\big)^{-1}\Big)\Big]
\Big| \to 0 \,,
\end{equation}
where $\widetilde \bA_N^\kappa$ denotes an independent copy
of $\bA_N^\kappa$ which is also independent of $\bD_N$ while  
$D_N(0,0)$ of law $\mu^{\bD}$ is independent of all other variables. 
Indeed, focusing w.l.o.g. on $r=1$ and taking 
$\bar \bG^\kappa_{N+1}(z) = (z\bI_{N+1}-\bar \bD_{N+1}- \bar 
\bA_{N+1}^\kappa)^{-1}$ 
(with $\bar \bD_{N+1}$ denoting the diagonal matrix of entries
$D_N(k,k)$, $k=0,\ldots,N$), 
we get \eqref{eq:lbrn00} by the invariance of the law
of $\bar \bD_{N+1} + \bar \bA_{N+1}^\kappa$ to
symmetric permutations of its first $[N b_1]+1$ rows 
and columns.
Schur's complement formula then leads to the identity 
\eqref{equalitylemma1}
with $D_N(0,0)$ added to $\wt{A}^\kappa_N(0,0)$ on its right side. 
All eigenvalues (and diagonal terms) of $\bG_N^\kappa(z)$ 
are in the compact set $\Db (z)$, regardless of the value of $\bD_N$,
and the centered entries of $\wt{\bA}_N^\kappa$ are independent of 
both $\bG_N^\kappa(z)$ and $D_N(0,0)$. Thus, as in the 
proof of Lemma \ref{approxss} we can neglect both $\wt{A}_N^\kappa(0,0)$ and  
$\sum_{k\neq l} \wt{A}_N^\kappa(0,k) \wt{A}_N^\kappa(l,0) G_N^\kappa(z)_{kl}$ 
in \eqref{equalitylemma1} and get \eqref{approx2} except for
changing here $z$ to $z-D_N(0,0)$ in its right side. 
Equipped with the latter version of \eqref{approx2}, 
fixing $0< \kappa < \frac{1}{2(2-\alpha)}$ we arrive at 
\eqref{eq:approxes-ext} upon adapting \cite[Lemma 4.1]{BAG6} and 
its proof to our matrices $\bar \bG^\kappa_{N+1}$ 
and $\bG^\kappa_N$ (while taking there the corresponding matrices 
$\hat{\bG}^\kappa_N = (z\bI_{N+1} -\bar 
\bD_{N+1} - \hat{\bA}^\kappa_N)^{-1}$).
 
The concentration result of Lemma \ref{concentration} holds 
in the presence of the diagonal matrix $\bD_N$ 
of i.i.d. entries. Indeed, its proof is easily adapted to the
current setting by considering for $f$ continuously differentiable 
$L_{N,s}^{z,\kappa}(f) := 
F_{N}(D_N(l,l), A^\kappa_N({k,l}), 1 \le k\le l \le N)$, and 
noting that for $1 \leq l \leq N$, 
$$
\partial_{D(l,l)} F_N = \frac{1}{N} [\bG_N^\kappa(z) \bD_s(f')
\bG_N^\kappa(z)]_{ll}
\,.
$$
The spectral radius of $\bG_N^\kappa (z) \bD_s(f') \bG_N^\kappa (z)$ 
is again bounded by $\|f'\|_\infty /  |\Im(z) |^2$, so 
$\sup_l \|\partial_{D(l,l)}F_N\|_\infty 
\leq \|f\|_\rBL (N |\Im(z) |^2)^{-1}$. 
There are only $N$ such variables $\{D_N(l,l)\}$ 
to consider, each having the same finite second moment, 
so using the same martingale bound as in \eqref{danslebureau},
their total effect on 
$\E[ (F_N-\E[F_N])^2]$ 
is taken care off by enlarging the finite constant $c_0$.

Equipped with this concentration result and 
replacing Lemma \ref{approxss} with \eqref{eq:approxes-ext},
we follow the proof of Proposition \ref{prop-limitpoint}
to deduce that in our current setting, 
for $r\in\{1,\cdots,q\}$ 
and every bounded continuous function $f$ on $\Db (z)$,
\begin{equation}\label{eq:fidend}
\int f d\mu^z_r =\int f\Big( (z- \lambda - \sum_{s=1}^q
\sigma_{rs}^2  \Delta_s^{\frac{2}{\alpha}} x_s)^{-1}\Big) 
\prod_{s=1}^qdP^{\mu^z_s}(x_s) d\mu^{\bD}(\lambda) \,.
\end{equation}
Following the proof of Proposition \ref{projectionoflimitpoint}
we find that
this in turn implies that any subsequence of the functions  
$X_{N,r}(z) = \E[L_{N,r}^{z,\kappa}(x^{\alpha/2})]$ 
has at least one limit point $(X_{r}(z),1 \le r \le q)$ 
composed of analytic functions on $\C^+$ that are bounded
by $(\Im(z))^{-\alpha/2}$ and satisfy the 
following generalization of \eqref{cocottes}
$$
X_r(z)= C(\alpha)\int \int_0^\infty t^{-1} (it)^{\frac{\alpha}{2}}
e^{it(z-\lambda)} \exp\{-(it)^{\frac{\alpha}{2}} \wh{X}_r(z) \}
\, dt \, d \mu^{\bD}(\lambda)\,,
$$
for the analytic functions $\wh{X}_r :\C^+ \mapsto \wh{\KK}_\a$ 
of \eqref{eq:yrdef}.
 
We proceed to extend Proposition \ref{uniqueprop} 
to the setting of $\bA^\s_N+\bD_N$. Indeed, fixing 
$z \in \C^+$, 
upon applying per $\lambda \in \reals$  the identity \eqref{eq:gziden} 
for $\beta=\alpha$, $y = (\lambda-z)^{-\alpha/2} \wh{X}_r(z)$ 
and with $z-\lambda \in \C^+$ replacing $z$, we see that 
the preceding generalization of \eqref{cocottes} 
is equivalent to 
$$
X_r(z)= C(\alpha)\int (\lambda-z)^{-\frac{\alpha}{2}}
g_{\a,\a} ( (\lambda-z)^{-\frac{\a}{2}} \wh{X}_r (z) ) 
\, d \mu^{\bD}(\lambda)\,.
$$
By \eqref{eq:yrdef} we thus deduce that
$(\wh{X}_r(z), 
1 \leq r \leq q)$ satisfy \eqref{systemeqhX}. 
Namely, it is a solution of $\uu{\wh{x}}=\uu{F}_{z}(\uu{\wh{x}})$
composed of analytic functions from $\C^+$ to $\wh{\KK}_\alpha$,
where $\uu{F}_z(\cdot)=(F_{z,r}(\cdot), 1 \leq r \leq q)$ and 
$$
F_{z,r}(\uu{\wh{x}}) 
:= 
\oo{C}_\a \sum_{s=1}^q 
\wh{a}_{rs} \int (\lambda-z)^{-\frac{\alpha}{2}}
g_{\alpha} \big((\lambda-z)^{-\frac{\alpha}{2}} \wh{x}_s\big) 
d\mu^{\bD}(\lambda) \,, 
$$
for $\wh{a}_{rs}=|\sigma_{rs}|^\alpha \D_s$.
Note that if $\wh{x}_s \in \wh{\KK}_\alpha$ then 
$(\lambda-z)^{-\frac{\alpha}{2}} \wh{x}_s$
is in $\KK_\alpha$ so such solutions must have  
$|\wh{x}_r| \leq c (\Im(z))^{-\frac{\alpha}{2}}$ 
for
 $c := |C_\a| 
\|g_\alpha\|_{\KK_\alpha} \max_{r} \sum_{s=1}^q |\wh{a}_{rs}|$ 
finite and all $z \in \C^+$. Consequently, if $\Im(z) \geq 1$
then 
$\max_r |\wh{x}_r | \leq c$. 
Thus, for such $z$, any  $1 \leq r \leq q$ and any 
two fixed points $\uu{\wh{x}}$ and $\uu{\wh{y}}$ of 
$\uu{F}_z(\cdot)$ in $(\wh{\KK}_\alpha)^q$, 
$$
|F_{z,r}(\uu{\wh{x}}) - F_{z,r}(\uu{\wh{y}})| \leq
\max_{r,s} \{|\wh{a}_{rs}| \} \|g_\alpha\|_c (\Im (z))^{-\alpha} 
\| \uu{\wh{x}} - \uu{\wh{y}} \|_1
$$
(where $\|g_\alpha\|_c$ and $\|g_\alpha\|_{\KK_\alpha}$ are as in 
the proof of Theorem \ref{weakening} and
$\|\uu{\wh{x}}\|_1 := \sum_{s=1}^q |\wh{x}_s|$). 
Thus, for some $k_0$ finite, if $\Im(z) \ge k_0$ then 
$\|\uu{F}_{z}(\uu{\wh{x}}) - \uu{F}_{z}(\uu{\wh{y}})\|_1 \leq
\frac{1}{2} \| \uu{\wh{x}} - \uu{\wh{y}} \|_1$ resulting with uniqueness of 
the fixed point of $\uu{F}_z(\cdot)$ in $(\wh{\KK}_\alpha)^q$.
This in turn implies the stated uniqueness of such fixed point composed
of analytic functions $z \mapsto \wh{x}_s$ from $\C^+$ to $\wh{\KK}_\alpha$.

To complete the proof of Theorem \ref{weakeningD} in case
$\s \in \PC$, we adapt our 
proof of Theorem \ref{theo-limitpoint-uniq-amir}, where instead of
\eqref{eq:newam2}, combining \eqref{eq:fidend}
for $f(x)=x$ 
with \eqref{cocott}, here the limit points $\mu^z_s$ of  
$(\E[L_{N,s}^{z,\kappa}], 1 \leq s \leq q)$
are such that for each $r \in \{1,\ldots, q\}$,
\begin{equation}\label{eq:newam2d}
\int xd\mu^z_r(x) = -i \int d\mu^{\bD}(\lambda) 
\int_0^\infty e^{it(z-\lambda)} \exp\{- (it)^{\frac{\alpha}{2}}
\wh{X}_r(z)  \} dt \,.
\end{equation}
In particular, since $\int x d\mu^z_r$ is uniquely determined 
by $\wh{X}_r(z)$ 
we deduce that the sequence  
$\E[L_{N,r}^{z,\kappa}(x)]$ converges as $N \to \infty$
to the right side of \eqref{eq:newam2d}. So, with  
$$
\E[\int \frac{1}{z-x} d\mun_{\bA_N^{\kappa}+\bD_N}(x)]
=\sum_{s=1}^q\De_{N,s}\E[L_{N,s}^{z,\kappa}(x)]
$$
for any $z \in \C^+$, it follows 
that 
$$
\int \frac{1}{z-x} d\mu^{\sigma,\bD}(x) =-i \sum_{s=1}^q\De_s 
\int d\mu^{\bD}(\lambda) 
\int_0^\infty e^{it(z-\lambda)} \exp\{- (it)^{\frac{\alpha}{2}}
\wh{X}_s(z)  \} dt \,,
$$
for any 
limit point $\mu^{\sigma,\bD}$ of $\E[\mun_{\bA_N^{\kappa}+\bD_N}]$.
With the Cauchy-Stieltjes transform $G^{\bD}_{\alpha,\sigma}$ 
of $\mu^{\sigma,\bD} \in \Pa(\R)$ 
uniquely determined, we deduce that 
$\E[\mun_{\bA_N^{\kappa}+\bD_N}]$ converges to 
$\mu^{\sigma,\bD}$, hence so does $\E[\mun_{\bA_N+\bD_N}]$.
Finally, for $z \in \C^+$ we arrive at the formula 
\begin{equation}\label{eqGD} 
G^{\bD}_{\alpha,\sigma}(z)
= 
\int \frac{1}{z-\lambda}  \sum_{s=1}^q \D_s 
h_\alpha((\lambda-z)^{-\frac{\alpha}{2}} \wh{X}_s(z)) 
\,
d\mu^{\bD}(\lambda) \,,
\end{equation}
by applying \eqref{eq:gziden} with $\beta=2$,  
$y=(\lambda-z)^{-\frac{\alpha}{2}} \wh{X}_s(z)$ 
and $z-\lambda$ instead of $z$. 

\subsection{The extension of Theorem \ref{weakening}}
Setting $\sigma \in \FF_\alpha$
we adapt the proof of Theorem \ref{weakening} to the current 
setting. Indeed, using the same approximating sequence 
$\s_p\in\PC$ of $\s \in \FF_\alpha$ as in the 
proof of Theorem \ref{weakening}, we have shown already that
\eqref{systeqlimD} holds for each of the piecewise constant
functions $\wh{X}^{\sigma_p}_. (z) : (0,1] \to \wh{\KK}_\alpha$,
$p \in \N$, where 
$$
\wh{X}^{\sigma_p}_x(z)=\wh{X}_s(z) \mbox{ for } x\in (b_{s-1}^p,b_s^p]
\mbox{ and } s=1,\ldots,q(\s_p) \,,
$$
and $\wh{X}_s (z) \in \wh{\KK}_\alpha$ are the unique
collections of (analytic) functions of $z \in \C^+$ 
we have constructed in Section \ref{sec:ext-thm12}.

Similarly to the proof of Theorem \ref{weakening}, 
we get the existence of 
a bounded measurable solution 
$\wh{X}^{\sigma}_. (z) : (0,1] \mapsto \wh{\KK}_\alpha$
of \eqref{systeqlimD} whenever $\Im(z) \ge R = R(\sigma)$ 
by showing that for such $z$ 
the fixed points $(\wh{X}^{\sigma_p}_. (z), p \in \N)$ of the mappings 
$$
F_z(\s,\wh{X}) := \oo{C}_\alpha \int_0^1 |\sigma(\cdot,v)|^\alpha
\int (\lambda-z)^{-\frac{\alpha}{2}} g_{\alpha} \big( 
(\lambda-z)^{-\frac{\a}{2}} \wh{X}_v \big) d\mu^{\bD}(\lambda) \, dv \,,
$$
at $\s=\s_p$ form a Cauchy sequence in $L^\infty((0,1])$.
To this end, recall that $\|g_\alpha\|_{\KK_\alpha}$ is finite (by
Lemma \ref{boundg}), so fixing $\epsilon \in (0,1)$ and 
bounding the 
$L^\infty$-norm of $F_z(\s,\wh{X})$ for $\wh{X} \in \wh{\KK}_\alpha$ 
we deduce that $\|\wh{X}^{\sigma_p}_.\|_\infty \le r_\s$ of \eqref{unifbY}
for all $p \in \N$, whenever $\Im(z) \geq \epsilon$. It is easy to verify 
that for such $z$ our mapping $F_z(\cdot,\cdot)$ 
satisfies the inequality \eqref{eq:Fbd} except for 
replacing there $|z|^{-\a}$ by $(\Im(z))^{-\a/2}$. Consequently, 
with $\wh{X}^{\sigma_p}_.$ fixed points of this mapping, 
our uniform bound on $\|\wh{X}^{\sigma_p}_.\|_\infty$ 
implies that 
\begin{eqnarray*}
\| \wh{X}^{\sigma_q}_.-\wh{X}^{\sigma_p}_.\|_\infty \le
(\Im(z))^{-\alpha/2} \|g_\alpha\|_{r_\s} 
\Big[ \| \, |\s_q|^\alpha - |\s_p|^\alpha\| + 2 k_\s   
\| \wh{X}^{\sigma_q}_.-\wh{X}^{\sigma_p}_.\|_\infty \Big] \,,
\end{eqnarray*}
for any $p,q \in \N$ and $\Im(z) \geq \epsilon$.
Thus, setting $R \ge \epsilon$ such that 
$R^{-\alpha/2} \|g_\alpha\|_{r_\s} k_\s \leq 1/3$, 
we conclude in view of \eqref{domaine} that 
$(\wh{X}^{\sigma_p}_.,p\in \N)$ is a Cauchy sequence
in $L^\infty(0,1];\C)$ whenever $z$ is in 
$\C^+_R := \{ z : \Im(z) > R \}$.
As in the proof of Theorem \ref{weakening},
the $L^\infty$-norm of its limit $\wh{X}^{\s}_.$ 
is at most $r_\s$ so by \eqref{domaine} and
the modified inequality \eqref{eq:Fbd} 
$\wh{X}^{\s}_. (z)$ must be a fixed point of 
$F_z(\s,\cdot)$. Further, equipped with the latter 
inequality, the uniqueness (almost everywhere) 
of such a solution to \eqref{systeqlimD} is obtained
by a re-run of the relevant argument from the
proof of Theorem \ref{weakening}.
We have seen that the holomorphic mappings 
$\wh{X}^{\s_p}$ from $\C^+$ to the closed subset
$\Fb :=L^\infty((0,1];\wh{\KK}_\alpha)$ of 
$L^\infty((0,1];\C)$ are locally uniformly bounded.
Hence, their $L^\infty$-convergence to $\wh{X}^\s$ 
extends by Vitali's convergence theorem 
from the non-empty open subset $\C^+_R$ 
to all of $\C^+$, with $\wh{X}^\s: \C^+ \mapsto \Fb$ 
an analytic mapping which is uniquely determined 
by the uniqueness of the solution in $\Fb$ of
\eqref{systeqlimD} for each $z \in \C^+_R$ 
(and the identity theorem).

Next, with the same proof as in Proposition \ref{prop-dense} 
we have from the $L^2_\star$-convergence of $\s_p$ to $\s$ that 
$G^{\bD}_{\alpha,\s_p}(z) \to G^{\bD}_{\a,\s}(z)$ as $p \to \infty$, 
for each $z \in \C^+$. If $z \in \C^+_R$ then also
$\| \wh{X}^{\s_p}_. - \wh{X}^\s_.\|_\infty \to 0$. As  
the identity \eqref{eqGlimD} holds for  
$\s=\s_p \in \PC$, $p \in \N$ (being then
merely the formula \eqref{eqGD}),
taking $p \to \infty$ we deduce by dominated convergence  
that \eqref{eqGlimD} holds for $\s \in \FF_\alpha$
and $z \in \C^+_R$. 
For all $x \in (0,1]$, $\lambda \in \R$ and $z \in \C^+$
the argument $(\lambda-z)^{-\alpha/2} \wh{X}_x^\s(z)$ of 
the entire function $h_\alpha$ is in the set $\KK_\a$ 
where $h_\alpha$ and its derivatives are 
uniformly bounded. 
Further, for such $\lambda$ the mapping  
$z \mapsto (\lambda-z)^{-\alpha/2} \wh{X}^{\s} (z)$  
from $\C^+$ to $L^\infty((0,1];\C)$ is analytic,
out of which 
one can verify that the right side of 
\eqref{eqGlimD} is analytic on $\C^+$. 
With $G^{\bD}_{\alpha,\s}$ also analytic on $\C^+$ 
the validity of \eqref{eqGlimD} extends 
from $\C^+_R$ to $\C^+$
(by the identity theorem).

\medskip
{\bf Acknowledgment}
We thank Ofer Zeitouni for telling us about the 
interpolation approach to convergence almost surely 
(as done here in Lemma \ref{lem:interp}), 
and Gerard Ben Arous for proposing that we  
investigate the gap at zero for the support of 
the measure $\mu_\a^\gamma$ and 
suggesting the argument for the
continuity of $\alpha\mapsto \mu_\alpha$ at 
$\alpha=2$. We are also grateful to 
Alexey Glutsyuk and 
Jun Li for their help in forming  
Proposition \ref{prop:analytic} and to the anonymous referee 
for pointing out the problem in using an $L^2$-norm instead 
of the $L^2_\star$ semi-norm.

\bibliography{wishart}
\bibliographystyle{acm}

\end{document}